\newcommandx{\jow}[2][1=]{\todo[linecolor=orange,backgroundcolor=orange!25,bordercolor=orange,#1]{#2}}
\newcommandx{\mateus}[2][1=]{\todo[linecolor=blue,backgroundcolor=blue!25,bordercolor=blue,#1]{#2}}
\def\R{\mathbb{R}}
\def\N{\mathbb{N}}
\def\Z{\mathbb{Z}}
\def\C{\mathbb{C}}
\renewcommand{\d}{\mathrm{d}}
\newcommand{\eps}{\varepsilon}
\newcommandx{\eq}{\approxeq}
\newtheorem{theorem}{Theorem}
\newtheorem{corollary}{Corollary}
\newtheorem{proposition}{Proposition}
\newtheorem{remark}[proposition]{Remark}
\newtheorem{lemma}[proposition]{Lemma}
\numberwithin{equation}{section}
\numberwithin{proposition}{section}
\date{January 12, 2022}
\title[Uniqueness from zeros]{Uniqueness of solutions to nonlinear Schr\"odinger equations from their zeros}
\author{Christoph Kehle}
\address{Institute for Advanced Study, School of Mathematics, 1 Einstein Drive, Princeton NJ, USA and ETH Zürich, Institute for Theoretical Studies, Clausiusstrasse 47, 8092 Zürich}
\email{c.kehle@ias.edu}
\author{João P. G. Ramos}
\address{ETH Zürich, Department of Mathematics, Rämisstrasse 101, 8092 Zürich}
\email{joao.ramos@math.ethz.ch}
\begin{document}
	
\begin{abstract}

We show  novel types of uniqueness and rigidity results for Schrödinger equations in either the nonlinear case or in the presence of a  complex-valued potential. As our main result we obtain that the trivial solution $u=0$ is the only solution for which the assumptions $u(t=0)\vert_{D}=0, u(t=T)\vert_{D}=0$ hold, where $D\subset \R^d$ are certain subsets of codimension one. In particular, $D$ is \emph{discrete} for dimension $d=1$.

Our main theorem can be seen as a nonlinear analogue of discrete Fourier uniqueness pairs such as the celebrated Radchenko--Viazovska formula in  \cite{MR3949027}, and the uniqueness result of the second author and M. Sousa for powers of integers \cite{ramos2019fourier}. As an additional application, we deduce rigidity results for solutions to some semilinear elliptic equations from their zeros.

\end{abstract} 
\maketitle
 {\hypersetup{hidelinks}
\tableofcontents
	}

\section{Introduction} 

Let $f \in L^2(\R).$ The problem of determining necessary and sufficient conditions on sets $A,B$ so that $f|_A = \widehat{f}|_B = 0$ implies $f \equiv 0$ has been extensively studied in connection with the celebrated Heisenberg uncertainty principle. If a pair of sets $(A,B)$ possesses the property stated above, possibly under additional assumptions on the functions $f$, we call it a \emph{Fourier uniqueness pair}. 

For instance,  $(A,B) = (\R \setminus (-1/2,1/2), \Z)$ is seen, due to the Shannon--Whittaker interpolation formula for band-limited functions, to be a Fourier uniqueness pair. In the celebrated work \cite{MR3949027},   Radchenko and Viazovska recently proved that, restricted to the class of Schwartz functions, $(A,B) = (\sqrt{n},\sqrt{n})_{n\in \mathbb N}$ is a Fourier uniqueness pair. After the Radchenko--Viazovska breakthrough, several other results came about giving necessary and sufficient conditions on the pair $(A,B)$ so that the Fourier uniqueness property holds, for which we refer the reader to \cite{ramos2021perturbed,stoller2021fourier,Kul21} and the references therein.

While the Fourier transform is a rather distinguished and highly symmetric operator, it is now a  natural problem to explore the validity of such uniqueness results for more general operators   which are known to enjoy similar uncertainty properties. 
 Indeed, in the recent work \cite{goncalves2020note} by F.~Gon\c{c}alves and the second author, the first step connecting Fourier uniqueness pairs to other operators has been taken. There, the authors consider the \emph{free Schr\"odinger equation} 

\begin{align}
\label{eq:free-evolution}
\begin{cases}
i \partial_t u + \Delta u = 0 & \text{ in } \R \times \R;  \cr 
u(x,0) = u_0(x) & \text{ on } \R.  \cr 
\end{cases} 
\end{align}

Among other properties, the authors prove that, for certain discrete subsets of the $(x,t)-$plane, if a solution $u$ vanishes at the points of that set, and $u_0$ belongs to a fast-decaying class of functions, then $u \equiv 0.$ 

The result relies heavily on the explicit representation of the free evolution of \eqref{eq:free-evolution} given by $ u = e^{it\Delta } u_0 = \frac{1}{  \sqrt{4t \pi i} } e^{-|x|^2/4it} \ast u_0$ which allows to resemble several aspects of the previously mentioned rigidity results for the Fourier transform. An important question immediately raised is whether such uniqueness results extend for the evolution in the presence of a given \emph{potential} or  for \emph{nonlinear interactions}.  Indeed, already in the linear case of the Schrödinger equation with a  (possibly complex-valued) potential
\begin{equation}\label{eq:schrodinger-potential-0}
\begin{cases}
i \partial_t u + \Delta u + V u= 0 & \text{ in } \R \times [0,T];  \cr 
u(x,0) = u_0(x) & \text{ on } \R, \cr 
\end{cases} 
\end{equation}
the techniques used by Gon\c{c}alves and the second author \cite{goncalves2020note}, which mainly rely on relating the evolution of the free Schr\"odinger equation to suitable Fourier transforms, completely fall apart, and it seems that new techniques are needed then. 

The main purpose of this work is to provide first examples of \emph{discrete} uniqueness sets for the Schr\"odinger equation, either in the case of a potential or in the nonlinear case. Before we introduce our main  results in \cref{sec:main-results}, however, we summarize some previous progress on the question of unique continuation for the Schr\"odinger equation. 

Indeed, our starting point will be a result on unique continuation for the Schr\"odinger equation by Kenig, Ponce and Vega \cite{KPV03}, where the authors proved the following. Let \linebreak $u_1,u_2 \in C([0,T]; H^s(\R^d))),$ $ \, s \geq  \max\{d/2+,2\},$  
be solutions to 
\begin{equation}\label{eq:schrodinger-nonlinear-general} 
i	\partial_t u + \Delta u + F(u,\overline{u}) = 0 \text{ in } \R^d\times [0,T],
	\end{equation}
where $F \in C^{\lfloor s \rfloor + 1} (\C^2 \colon \C)$ satisfies $|\nabla F(u,\overline{u})| \lesssim (|u|^{p_1 - 1} + |u|^{p_2 - 1}), p_1, p_2 >1.$ If 
$$u_1(x,0) = u_2(x,0), \, \, u_1(x,T) = u_2(x,T), \, \, \forall x \not\in \Gamma+ y_0,$$
where $\Gamma$ is a convex cone contained in a half space, then $u_1 \equiv u_2.$ One of the key steps in their proof is a lemma---which we shall use in this paper; see Lemma \ref{lemma-KPV} for more details---which roughly states that exponential decay is preserved for times $t \in (0,T),$ as long as we have exponential decay of the solution at two times $t=0,T.$ 

After the Kenig--Ponce--Vega breakthrough, several other advances in the problem of determining sharper conditions on $u_1-u_2$ for uniqueness have been made, such as \cite{EKPVSurvey}, the recent survey \cite{linares2021unique} for non-local operators and the references therein. In particular, in the series of papers  \cite{EKPV06,EKPV08,EKPV10}, Escauriaza--Kenig--Ponce--Vega obtained a \emph{sharp form} of Hardy's uncertainty principle for the Schr\"odinger evolution: suppose $u \in C([0,T];L^2(\R^d))$ is a solution to \eqref{eq:schrodinger-potential-0}, and suppose additionally that there are two constants $\alpha,\beta > 0$ with $\frac{\alpha \beta}{4T} < 1$ such that 
\begin{equation}\label{eq:gaussian-decay-schrodinger}
\|e^{|x|^2/\beta^2} u(0)\|_2 + \|e^{|x|^2/\alpha^2} u(T)\|_2 <+\infty. 
\end{equation}
If the potential $V$ satisfies $\lim_{R \to \infty} \|V\|_{L^1([0,T];L^{\infty}(\R^d \setminus B_R))} = 0,$ then one has $u \equiv 0.$ They also proved that the conditions on $\alpha,\beta$ are \emph{sharp}, as in the case of $\alpha \beta = 4T$ there is a potential satisfying the condition above and a non-zero solution to the Schr\"odinger equation \eqref{eq:schrodinger-potential-0} satisfying \eqref{eq:gaussian-decay-schrodinger}.  

As a consequence of their results for the potential case \eqref{eq:schrodinger-potential-0}, they obtain that if two solutions $u_1,u_2 \in C([0,T];H^k(\R^d)), k > d/2,$ of \eqref{eq:schrodinger-nonlinear-general}, with $F \in C^k, \, F(0) = \partial_u F(0) = \partial_{\overline{u}} F(0) = 0,$ satisfy 
$$e^{|x|^2/\beta^2} (u_1(0) - u_2(0)), e^{|x|^2/\alpha^2} (u_1(T) - u_2(T)) \in L^2,$$
where $\alpha \beta < 4T,$ then $u_1 \equiv u_2.$  \\

\section{Main results} 
\label{sec:main-results}
As highlighted before, the results above deal with decay properties of solutions to Schr\"odinger equations on \emph{large} spatial sets, in particular sets with zero codimension. Our main results are concerned with knowledge of the solution on \emph{small} spatial sets with codimension one. In particular, in the one-dimensional case, we obtain rigidity already from zeros (or decay) of the solution imposed on a \emph{discrete} set.

We first consider the one dimensional case, where we show a robust rigidity result for the Schrödinger equation in the presence of a complex potential (\cref{thm:first-weak-thm}) and for general nonlinearities (\cref{thm:uniqueness-one-dim}). The theorem, however, only applies in dimension $d=1$ and we have to assume rapidly accumulating zeros (or decay) at infinity. This is the content of \cref{sec:one-d-result-complex-pot}.
Then, in  \cref{sec:main-result-d>=1}, we state our main rigidity results \cref{thm:rigidity} ($d=1$) and \cref{thm:rigidity-high-dim} ($d\geq2$)  for the cubic NLS assuming only a sparser set of zeros.
In \cref{sec:elliptic-results} we present a novel rigidity result (\cref{thm:elliptic}) for certain semilinear elliptic PDEs as a corollary from the main results from \cref{sec:main-result-d>=1}.
Finally, in \cref{sec:generalization-main-result} we give an outlook how to extend the main theorems of \cref{sec:main-result-d>=1} to more general power nonlinearities for the Schrödinger equation.

\subsection{Main results for rapidly accumulating zeros for complex potential and general nonlinearities in \texorpdfstring{$d=1$}{d=1}.} \label{sec:one-d-result-complex-pot}
In dimension $d=1,$ we are able to prove, through basic estimates on the linear unitary group $\{e^{it\partial_x^2}\}_{t \in \R}$ and some cleaner estimates, the following result on uniqueness under quite weak regularity conditions.
\begin{theorem}\label{thm:first-weak-thm} Let $u \in C([0,T] \colon L^2(\R))$ be a strong solution of the initial value problem associated with the Schr\"odinger equation with complex-valued potential:
	\begin{equation}\label{eq:schrodinger-potential-1}
		\begin{cases}
			i \partial_t u = - \Delta u + V u & \text{ in } \R \times [0,T], \cr 
			u(x,0) = u_0(x) & \text{  on } \R, \cr
		\end{cases}
	\end{equation}
where we assume that the potential   satisfies 	$V \in L^1([0,T] \colon L^{\infty}(\R)) \cap L^{\infty}([0,T] \colon L^2((1+|x|)^2 dx)) $
and $\lim_{R \to \infty} \| V \|_{L^1_{[0,T]} L^{\infty}(\R \setminus B_R)} = 0$.

 	Suppose  $u_0, u(T) \in L^1((1+|x|)dx)$ and that there are constants $c_1,c_2,\delta > 0$ so that, for some $\alpha \in (0, \frac 12),$ we have 
	\begin{align}\label{eq:pointwise-condition-thm-1}
	|u_0(\pm c_1 \log(1+n)^{\alpha})| + |u(\pm c_2 \log(1+n)^{\alpha},T)| \le n^{-\delta}, \, \forall n \ge 0. 
	\end{align}
Then, we have $u \equiv 0.$ 
\end{theorem}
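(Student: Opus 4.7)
The plan is to exploit the explicit kernel representation of the free Schr\"odinger propagator to translate the pointwise hypotheses on $u_0$ and $u(T)$ into a Fourier-uniqueness situation for a single auxiliary function $F$, and then conclude using the sharp Hardy-type uncertainty principle for the Schr\"odinger evolution with potential from Escauriaza--Kenig--Ponce--Vega.

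The starting point is Duhamel's formula
\begin{equation*}
u(\cdot,T) \;=\; e^{iT\partial_x^2}u_0 \;-\; i\int_0^T e^{i(T-s)\partial_x^2}\bigl[V(s)u(s)\bigr]\,ds,
\end{equation*}
combined with the explicit identity
\begin{equation*}
e^{iT\partial_x^2}u_0(x) \;=\; \frac{e^{ix^2/(4T)}}{\sqrt{4\pi iT}}\,\widehat F\!\bigl(x/(2T)\bigr),\qquad F(y):=e^{iy^2/(4T)}u_0(y),
\end{equation*}
which shows that values of the free evolution are, up to a unimodular factor, values of the Fourier transform $\widehat F$. Since $u_0\in L^1((1+|y|)dy)$, $F$ lies in the same space, so $\widehat F\in C^1(\R)$ with bounded derivative. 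The first key step is to prove that the Duhamel correction, evaluated at $x = \pm c_2\log(1+n)^\alpha$, is itself $\lesssim n^{-\delta}$ (possibly after a small loss in $\delta$); combined with the hypothesis on $u(T)$ this yields
\begin{equation*}
\bigl|\widehat F\bigl(\pm c_2\log(1+n)^\alpha/(2T)\bigr)\bigr| \;\lesssim\; n^{-\delta}.
\end{equation*}
This is where most of the work will go and is, I expect, the main technical obstacle: one must split the defining integral of $e^{i(T-s)\partial_x^2}[V(s)u(s)]$ into a far-field piece (controlled by $\lim_{R\to\infty}\|V\|_{L^1_{[0,T]}L^\infty(\R\setminus B_R)}=0$) and a near-field piece (handled via the weighted assumption $V\in L^\infty_t L^2((1+|x|)^2\,dx)$ together with pointwise dispersive bounds for $e^{it\partial_x^2}$).

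At this stage we have simultaneous super-polynomial pointwise smallness of $F$ at $\pm c_1\log(1+n)^\alpha$ and of $\widehat F$ at the scaled counterparts. Setting $r_n:=c\log(1+n)^\alpha$ one has $n\asymp \exp((r_n/c)^{1/\alpha})$, so $n^{-\delta}\asymp\exp(-\delta(r_n/c)^{1/\alpha})$, which since $\alpha<1/2$ beats every Gaussian. The gaps $r_{n+1}-r_n$ shrink super-exponentially in $r_n$, and hence the $C^1$-control on $\widehat F$ upgrades the discrete super-Gaussian decay to a continuous one, $|\widehat F(\xi)|\lesssim\exp(-c'|\xi|^{1/\alpha})$. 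A symmetric argument, obtained by inverting the roles of $u_0$ and $u(T)$ through $u_0 = e^{-iT\partial_x^2}u(T)$ plus the corresponding backward Duhamel correction, yields the analogous continuous super-Gaussian decay of $u_0$.

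Translating back through the unimodular factors, this means that for \emph{every} $\alpha_0,\beta_0>0$ one has $e^{|x|^2/\alpha_0^2}u(T),\, e^{|x|^2/\beta_0^2}u_0 \in L^2(\R)$. In particular the hypothesis \eqref{eq:gaussian-decay-schrodinger} of the sharp Schr\"odinger--Hardy uncertainty principle of Escauriaza--Kenig--Ponce--Vega is satisfied with $\alpha_0\beta_0<4T$, the condition on the potential $V$ being precisely the one assumed here, and the conclusion is $u\equiv 0$. The role of the restriction $\alpha<1/2$ is to leave enough margin above the Gaussian threshold to survive both the Duhamel perturbation and the discrete-to-continuous interpolation step.
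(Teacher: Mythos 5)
There is a genuine gap in the central reduction. Your plan pivots on proving that the Duhamel correction
\begin{equation*}
D(x):=\int_0^T e^{i(T-s)\partial_x^2}\bigl[V(s)u(s)\bigr](x)\,ds
\end{equation*}
satisfies $|D(\pm c_2\log(1+n)^\alpha)|\lesssim n^{-\delta}$, so that smallness of $u(T)$ at those points transfers to smallness of the free evolution $e^{iT\partial_x^2}u_0$, i.e.\ of $\widehat F$. But no such quantitative \emph{pointwise} decay of $D$ at a fixed large $x$ is available from the hypotheses. The far-field/near-field split you propose does not close the estimate: for the near-field piece $\chi_{B_R}V u$ you are taking the Fourier transform (essentially) of a merely $L^1$ compactly supported function evaluated at a large frequency $x/(4\pi t)$, which goes to zero only by Riemann--Lebesgue, with no rate; for the far-field piece, the weighted assumption $V\in L^\infty_t L^2((1+|x|)^2dx)$ together with Cauchy--Schwarz and the $L^\infty$ dispersive bound gives at best a decay of order $1/R$ in the cutoff radius, nowhere near the required $e^{-\delta'|x|^{1/\alpha}}$. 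A second, related gap is that even if $\widehat F$ inherited super-Gaussian decay, that only controls $e^{iT\partial_x^2}u_0$; you would still have to show the Duhamel term has super-Gaussian decay to conclude the same for $u(T)$, which is again circular.

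The repair is to stop trying to isolate $\widehat F$ at all. The paper's proof works directly with the solution and estimates only \emph{differences}: it writes $|u(x,T)|\le |u(x,T)-u(c_2\log(1+n)^\alpha,T)| + n^{-\delta}$ for $x$ in the $n$-th gap, plugs the Duhamel formula into the difference, and uses the two comparison bounds of \cref{lemma-decay-free-eq} (the difference estimate $\lesssim \max(|x|,|y|)|x-y|/t^{3/2}$ on $[0,T-\theta]$ and the $L^\infty$ bound on $[T-\theta,T]$), then optimizes in $\theta$. This never requires any pointwise smallness of the Duhamel term itself: only the difference dispersive estimate is needed, and that is precisely what the weighted $L^1$ and $L^2$ assumptions on $u_0,u(T),V$ feed into. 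The super-exponential shrinkage of the gaps $|x - c_2\log(1+n)^\alpha|$ then produces the super-Gaussian decay directly on $u(T)$ (and by time reversal on $u_0$), after which EKPV applies exactly as you intended. In short: the endpoint, the role of $\alpha<\tfrac12$, and the discrete-to-continuous idea are all right, but the routing through a Fourier uniqueness pair for $F$ introduces a term you cannot bound; replace it with a difference estimate for $u(\cdot,T)$ itself.
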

 \begin{remark}
The assumptions on the solution  $u_0,u(T) \in L^1( (1+|x|) dx)$ and on the potential	$V \in L^1([0,T] \colon L^{\infty}(\R)) \cap L^{\infty}([0,T] \colon L^2((1+|x|)^2 dx)) $ will guarantee that $u_0, u(T) \in C^{\frac 12}(\R)$ (see already the proof of \cref{thm:first-weak-thm} in \cref{sec:one-dim-A}). Thus,  the  pointwise conditions on $u_0$ and $u(T)$ in \eqref{eq:pointwise-condition-thm-1} are well-defined. 
 \end{remark}

The proof of \cref{thm:first-weak-thm}, given in   \cref{sec:one-dim-A}, departs from the Duhamel formula for the solution $u$.  Then, we use the two dispersive estimates associated to $e^{it\partial_x^2}$ (see already \cref{lemma-decay-free-eq})  in different regimes in the Duhamel formula. Under the assumptions of \cref{thm:first-weak-thm}, we will show that we subsequently obtain sub-Gaussian decay of the solution in two different times. By the results \cite{EKPV06,EKPV08,EKPV10} of Escauriaza--Kenig--Ponce--Vega highlighted above, we must have that each solution with those properties is identically zero. 

As a consequence of Theorem \ref{thm:first-weak-thm}, we are able to prove the following corollary, which asserts that if two solutions of \eqref{eq:schrodinger-nonlinear-general}, which are of class $H^s, s > 1/2,$ possess some mild spatial decay and nearly coincide at many points, then they must be the same. 

\begin{corollary}\label{thm:uniqueness-one-dim} Let $u,v \in C([0,T] \colon H^{s}(\R) \cap L^2(x^2 \, dx)), s > 1/2,$ be two solutions of the initial value problem associated with the Schr\"odinger equation with a nonlinear term:
	\begin{equation}\label{eq:schrodinger-nonlinear-1}
		\begin{cases}
			i \partial_t w=- \Delta w + F(w,\overline{w}) & \text{ in } \R \times [0,T], \cr 
			w(x,0) = w_0(x) & \text{  on } \R, \cr
		\end{cases}
	\end{equation}
	where $F \in C^1(\C^2 \colon \C)$  satisfies 
\begin{equation}\label{eq:condition-nonlinear}
	|\nabla F(w,\overline{w})| \le C|w|^{p-1}
\end{equation}
for some 	  $p>2$.
	Suppose that there are constants $c_1,c_2,\delta > 0$ so that, for some $\alpha \in (0, \frac 12),$  we have 
	\[
	|(u_0-v_0)(\pm c_1 \log(1+n)^{\alpha})| + |(u-v)(\pm c_2 \log(1+n)^{\alpha},T)| \le n^{-\delta}, \, \forall n \ge 0
	\]
 as well as $u_0 - v_0, u(T) - v(T) \in L^1((1+|z|)dz)$. Then, we have $u \equiv v.$ 
\end{corollary}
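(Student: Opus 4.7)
The natural reduction is to form the difference $w := u - v$, which satisfies
\[
i\partial_t w + \Delta w \;=\; F(u,\overline{u}) - F(v,\overline{v}).
\]
By the fundamental theorem of calculus along the segment from $(v,\overline{v})$ to $(u,\overline{u})$, I would rewrite the right-hand side as $V_1(x,t)\,w + V_2(x,t)\,\overline{w}$, with
\[
V_1 := \int_0^1 \partial_u F\bigl(v+sw,\,\overline{v}+s\overline{w}\bigr)\,ds, \qquad V_2 := \int_0^1 \partial_{\overline{u}} F\bigl(v+sw,\,\overline{v}+s\overline{w}\bigr)\,ds,
\]
and the growth condition \eqref{eq:condition-nonlinear} gives the pointwise bound $|V_1| + |V_2| \le C(|u|^{p-1} + |v|^{p-1})$. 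To collapse this into a single complex-valued potential amenable to \cref{thm:first-weak-thm}, I would set $V := V_1 + V_2\,\overline{w}/w$ on $\{w \neq 0\}$ and $V := 0$ elsewhere; since $|\overline{w}/w| = 1$ wherever defined, one has $|V| \le |V_1| + |V_2|$ everywhere, and $w$ solves the linear equation $i\partial_t w = -\Delta w + V w$ on $\R \times [0,T]$.

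The next step is to verify that $V$ meets the hypotheses of \cref{thm:first-weak-thm}. The Sobolev embedding $H^s(\R) \hookrightarrow C_0(\R)$ for $s>1/2$ places $u(\cdot,t), v(\cdot,t)$ in $C_0(\R)$, and continuity of $t \mapsto u(\cdot,t)$ into $H^s(\R)$ renders the trajectory compact in $C_0(\R)$; consequently $\|u(t)\|_{L^\infty(\R \setminus B_R)} \to 0$ uniformly in $t \in [0,T]$ as $R \to \infty$, and likewise for $v$. Combined with $|V| \le C(|u|^{p-1}+|v|^{p-1})$, this yields $V \in L^1([0,T]; L^\infty(\R))$ and $\lim_{R\to\infty}\|V\|_{L^1_{[0,T]} L^\infty(\R\setminus B_R)} = 0$. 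The weighted bound $V \in L^\infty([0,T]; L^2((1+|x|)^2\,dx))$ follows after factoring $|u|^{p-1} = |u|^{p-2}\cdot |u|$: the first factor is uniformly bounded because $p > 2$ and $u \in L^\infty_{t,x}$, while the second lies in $L^2((1+|x|)^2\,dx)$ by the hypothesis $u \in L^2(x^2\,dx)$.

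With these checks in hand, \cref{thm:first-weak-thm} applied to $w$---whose remaining hypotheses $w(0), w(T) \in L^1((1+|z|)\,dz)$ and the pointwise smallness on $\pm c_j\log(1+n)^\alpha$ transfer verbatim from the statement---forces $w \equiv 0$, i.e., $u \equiv v$. The technical crux is really the uniform-in-time decay of $u$ and $v$ at spatial infinity, which powers both spatial-decay conditions on the effective potential $V$; once this is secured, the linearization through $V_1,V_2$ and their collapse into a single complex-valued $V$ via the unimodular phase $\overline{w}/w$ is the standard Escauriaza--Kenig--Ponce--Vega device and the rest of the verification is routine bookkeeping.
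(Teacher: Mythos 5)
Your proposal is correct and follows essentially the same route as the paper: form $w = u - v$, recast the difference equation as a linear Schr\"odinger equation with a complex potential $V$ bounded pointwise by $C(|u|^{p-1}+|v|^{p-1})$, check the hypotheses of \cref{thm:first-weak-thm}, and apply it. You are in fact a bit more careful than the paper's one-paragraph argument in two places: (i) you make the potential genuinely well-defined by explicitly linearizing into $V_1 w + V_2\overline{w}$ and then collapsing to the scalar $V = V_1 + V_2\overline{w}/w$ using the unimodular phase, whereas the paper writes $V = (F(u,\overline{u})-F(v,\overline{v}))/(u-v)$ as if it were automatically a single scalar; and (ii) you give the compactness-in-$C_0(\R)$ argument justifying $\lim_{R\to\infty}\|V\|_{L^1_{[0,T]}L^\infty(\R\setminus B_R)}=0$, a hypothesis the paper asserts but does not verify.
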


Notice that Corollary \ref{thm:uniqueness-one-dim} is much easier in the case of $u,v \in C([0,T], H^1(\R)),$ as then the fundamental theorem of calculus gives us the sub-Gaussian estimates in a cheap way. This is, in part, the reason why that result is only stated in one dimension: for higher dimensions, the ``natural" condition would be $s > d/2,$ in which case a Poincar\'e inequality argument---see the proof of Lemma \ref{lemma-decay-from-zeros-high-dim} for more details---would also give us the result almost instantly. 

\subsection{Main results for slowly accumulating zeros for cubic NLS for \texorpdfstring{$d\geq 1$}{d>=1}} \label{sec:main-result-d>=1}
 
Notice that the conditions on the set where $u$ is ``small" in Theorem \ref{thm:first-weak-thm}---though discrete---are quite dense, in the sense that, given regularity, the Poincar\'e inequality argument as before already gives the result by previous estimates. On the other hand, if the nodes are  ``sparser", then sub-Gaussian decay cannot be obtained as described above.
 We will address this question for the cubic NLS
 	\begin{equation}\label{eq:cubic-nls}
	\begin{cases}
		i \partial_t u = -\Delta u - \lambda |u|^2 u & \text{ in } [0,T] \times \R^d, \cr 
		u(x,0) = u_0(x) & \text{ on } \R^d,
	\end{cases}
\end{equation}
for \emph{fixed} $\lambda \in \mathbb C$. 
We first state our result for dimension $d=1$. 
 \begin{theorem}\label{thm:rigidity}   Let $u \in C([0,T] \colon L^2(\R))$ be a strong solution to the initial value problem \eqref{eq:cubic-nls} for $d=1$.

  Suppose additionally that $u_0, u(T) \in L^2(x^2 \, dx),$ and that the solution satisfies 
 	\begin{align} \label{eq:pointwise-zero-condition}
 	u_0( \pm c_1 \log(1+n)^{\alpha}) = u(\pm c_2 \log(1+n)^{\alpha},T) = 0, \, \forall \, n \ge 0,
 	\end{align}
 	for some $\alpha \in (0,1)$  and some $c_1,c_2 >0$.   Then $u \equiv 0.$ 
 \end{theorem}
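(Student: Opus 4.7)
The plan is to reduce \cref{thm:rigidity} to the Escauriaza--Kenig--Ponce--Vega (EKPV) uniqueness machinery by converting the vanishing condition \eqref{eq:pointwise-zero-condition} into pointwise decay estimates for $u_0$ and $u(T)$, and then invoking an appropriate Hardy-type uncertainty principle for the Schr\"odinger evolution with bounded potential.

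The first step is to upgrade the endpoint regularity. Starting from $u_0, u(T)\in L^2(\R)\cap L^2(x^2\,dx)$ and combining the Duhamel formula for the cubic NLS (which is $L^2$-subcritical in $d=1$) with Strichartz estimates and Sobolev interpolation, I would obtain $u_0, u(T) \in C^{1/2}(\R)$, so the pointwise hypothesis \eqref{eq:pointwise-zero-condition} is meaningful and quantitative. Writing $x_n := c_1(\log(1+n))^{\alpha}$, the consecutive gap satisfies $x_{n+1}-x_n \asymp (\log n)^{\alpha-1}/n$, hence the H\"older-$1/2$ bound between zeros gives $|u_0(x)|\lesssim n^{-1/2}(\log n)^{(\alpha-1)/2}$ for $x\in[x_n,x_{n+1}]$, and analogously for $u(\cdot,T)$. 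Inverting $x_n\asymp (\log n)^{\alpha}$ translates this into the pointwise bound
\[
|u_0(x)| + |u(x,T)| \lesssim \exp\bigl(-\kappa\,|x|^{1/\alpha}\bigr) \quad \text{as } |x|\to\infty,
\]
for some $\kappa>0$. Since $\alpha\in(0,1)$ gives $1/\alpha>1$, this is super-exponential decay.

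When $\alpha\in(0,1/2)$ the exponent $1/\alpha$ exceeds $2$, so the decay dominates any Gaussian, and the conclusion $u\equiv 0$ follows immediately from \cref{thm:uniqueness-one-dim} (equivalently, from the sharp EKPV Hardy principle applied to $i\partial_t u = -\Delta u + Vu$ with the bounded potential $V=-\lambda|u|^2$, which is bounded by the NLS regularity theory). The genuinely new range is $\alpha\in[1/2,1)$, where the decay is only sub-Gaussian and EKPV does not apply directly. Here I would bootstrap: \cref{lemma-KPV} propagates the super-exponential decay of $u$ from the endpoints $t=0,T$ to all intermediate times, which yields a uniform-in-$t$ super-exponential bound on $V=-\lambda|u|^2$. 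Feeding this back into the Duhamel formula
\[
u(T) = e^{iT\partial_x^2}u_0 + i\lambda\int_0^T e^{i(T-s)\partial_x^2}\bigl(|u|^2 u\bigr)(s)\,ds
\]
and exploiting the cubic structure (which roughly triples the effective decay exponent of the forcing relative to that of $u$), one upgrades the decay rate of $u_0$ and $u(T)$ at each iteration; finitely many iterations push the decay past the Gaussian threshold, at which point EKPV applies and closes the argument.

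The principal obstacle is the last bootstrap, since $e^{it\partial_x^2}$ does not preserve Gaussian-type decay and each Duhamel iteration must carefully balance the cubic gain against the dispersive spreading of the free propagator. This is the structural reason the cubic power is singled out in \cref{thm:rigidity}, with the extension to more general power nonlinearities deferred to \cref{sec:generalization-main-result}.
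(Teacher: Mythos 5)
Your first step (using $H^1\subset C^{1/2}$ and the gaps between $x_n=c_1(\log(1+n))^{\alpha}$ to get $|u_0(x)|+|u(x,T)|\lesssim e^{-\kappa|x|^{1/\alpha}}$) is sound, and for $\alpha\in(0,1/2)$ this decay is super-Gaussian, so invoking EKPV is a legitimate route; this is essentially how \cref{thm:first-weak-thm} is proved. The gap is in the range $\alpha\in[1/2,1)$, which is the genuinely novel part of \cref{thm:rigidity}. The proposed Duhamel bootstrap cannot work: the Duhamel formula for $u(T)$ contains the linear term $e^{iT\Delta}u_0$ additively, and this term gains no decay from the cubic structure of the forcing. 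Worse, the free Schr\"odinger propagator destroys super-exponential decay (its kernel $\frac{1}{\sqrt{4\pi it}}e^{i|x-y|^2/4t}$ does not decay in $x$), so $e^{iT\Delta}u_0$ has at best polynomial decay even when $u_0$ is compactly supported. The ``cubic gain'' in the forcing term is likewise annihilated by the free propagation inside the integral. There is no iteration scheme that pushes sub-Gaussian decay past the Gaussian threshold, and without Gaussian decay with a small enough rate the EKPV Hardy principle simply does not apply. Note also that \cref{lemma-KPV} only propagates \emph{linear} exponential weights $e^{\lambda\cdot x}$, not super-exponential ones, so even the first step of the claimed bootstrap is not supported by the cited lemma.

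The paper's proof takes a fundamentally different route that avoids EKPV entirely. It first uses a Rolle-type iteration on derivatives (\cref{lemma-decay-from-zeros}): since $u_0, u(T)\in H^k$ for all $k$ (by the self-improvement \cref{lemma:self-improving}), the zeros of $u_0$ generate, via the mean value theorem, zeros of its derivatives with essentially the same distribution, and iterating the $C^{1/2}$ argument yields $|u_0(x)|\lesssim_N e^{-N|x|}$ for every $N$ (arbitrary exponential rate, not a Gaussian). Then \cref{lemma-KPV} propagates this exponential weight to all intermediate times. The key novel step is \cref{lemma:analyticity}: exponential decay together with local well-posedness in the Gevrey-type space $\mathcal{A}_{\infty,2}(r;T')$ implies that $e^{ix^2/4\tau}u(t+\tau)$ extends analytically and boundedly to a complex strip. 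Finally, \cref{lemma-analytic-part} (a Szeg\H{o}/Blaschke-type condition on zeros of bounded analytic functions on a strip, after conformally mapping to the disk) forces $\sum_n \bigl(e^{\pi x_n/(2r)}-e^{-\pi x_n/(2r)}\bigr)^{-1}<\infty$, which diverges like $\sum 1/(n+1)$ whenever $x_n=c_2(\log(1+n))^{\alpha}$ with $\alpha<1$, yielding the contradiction. This analyticity-plus-zero-counting argument is what makes the full range $\alpha\in(0,1)$ accessible; you should replace the bootstrap by this mechanism.
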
 
 \begin{remark}
By   local smoothing of the Schrödinger equation (see already \cref{lemma:self-improving}),  the assumptions  $u_0,u(T) \in L^2(x^2 dx)$ will guarantee $u_0, u(T) \in H^1(\R)$ and are thus sufficient to make sense of the pointwise conditions on $u_0$ and $u(T)$ in \eqref{eq:pointwise-zero-condition}.
 \end{remark}

The strategy in the proof of \cref{thm:rigidity}, given in \cref{sec:one-dim-B}, resembles some of the strategy in \cite{ramos2019fourier}. First, one obtains an   \emph{exponential decay} estimate for the solution at two different times through the fundamental theorem of calculus (see  \cref{lemma-decay-from-zeros}). We then use the result on propagation of exponential decay by Kenig, Ponce and Vega (see \cref{lemma-KPV}), in order to conclude that the same exponential decay estimate holds inside the time interval. We then use  \cref{lemma:analyticity} in order to conclude that the solution is, in fact, \emph{analytic} in a small strip of the complex plane. By a result on the zeros of analytic functions in a strip (\cref{lemma-analytic-part}), which is based on a theorem by Szeg\H{o}, we impose constraints that are violated if $z_n = c_2 \log(1+n)^{\alpha}$ are our sequence of zeros. This yields a contradiction, which concludes the proof. 

Our one dimensional result also generalizes to higher dimensions. 
 
\begin{theorem}\label{thm:rigidity-high-dim} Let $u \in C([0,T] \colon H^s(\R^d)),$ where $s > d/2 - 1$ be a strong solution to \eqref{eq:cubic-nls}, where $d\geq 2$.  Suppose additionally that $u_0, u(T) \in L^2(  |x|^{2k} dx ),$ for some $k \in \N, k \ge \frac{d}{2},$ and that the solution satisfies 
\begin{align}\label{eq:pointwise-condition-theorem3}
	u_0|_{c_1 \log(1+n)^{\alpha} \cdot \mathbb{S}^{d-1}} = u(T)|_{c_2 \log(1+n)^{\alpha} \cdot \mathbb{S}^{d-1}} \equiv 0, \, \forall n \ge 0,
\end{align}
in the sense of Sobolev traces	for some $\alpha \in (0,1)$ and some $c_1,c_2>0$. Then $u \equiv 0.$ 
\end{theorem}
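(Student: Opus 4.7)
The strategy would mimic the one-dimensional argument of Theorem \ref{thm:rigidity}, with radial slicing promoting each sphere condition to a one-dimensional zero set along every ray through the origin. The first step would be a regularity bootstrap: combining $u \in C([0,T]; H^s(\R^d))$ for $s > d/2 - 1$ with the spatial moment condition $u_0, u(T) \in L^2(|x|^{2k}\,dx)$, $k \geq d/2$, via the higher-dimensional analogue of the local smoothing lemma quoted just after Theorem \ref{thm:rigidity}, I would upgrade $u_0$ and $u(T)$ to $H^{\sigma}(\R^d)$ for some $\sigma > d/2$. This is exactly what is required to make sense of the Sobolev trace in \eqref{eq:pointwise-condition-theorem3} while also providing enough radial regularity to run a one-dimensional fundamental theorem of calculus on each ray.

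Next, I would convert the vanishing on the spheres $c_1\log(1+n)^{\alpha}\cdot\mathbb{S}^{d-1}$ and $c_2\log(1+n)^{\alpha}\cdot\mathbb{S}^{d-1}$ into a weighted $L^2$ exponential decay estimate. Writing $x = r\omega$ with $r \geq 0$ and $\omega \in \mathbb{S}^{d-1}$, condition \eqref{eq:pointwise-condition-theorem3} says that for a.e.\ $\omega$ the radial profile $r \mapsto u_0(r\omega)$ vanishes on $\{c_1\log(1+n)^{\alpha} : n \geq 0\}$, and similarly for $u(T,\cdot)$. Applying the Poincaré-style bound between consecutive zeros---the radial counterpart of the exponential-decay-from-zeros step that underlies Theorem \ref{thm:rigidity}---and then integrating the square of the resulting pointwise estimate against the polar Jacobian $r^{d-1}\,dr\,d\omega$, one arrives at the weighted bound $\|e^{\lambda|x|}u_0\|_{L^2(\R^d)} + \|e^{\lambda|x|}u(T)\|_{L^2(\R^d)} < \infty$ for some small $\lambda > 0$. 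The fact that the zero set is the same along every $\omega$ is what lets one absorb the radial constants uniformly in the angular variable.

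With weighted-$L^2$ exponential decay in hand at $t = 0$ and $t = T$, I would then invoke the Kenig--Ponce--Vega propagation lemma quoted in the introduction to conclude that the same estimate, with a possibly slightly smaller exponent $\lambda'$, holds for $u(\cdot, t)$ uniformly in $t \in [0,T]$. The analyticity lemma mentioned in the outline of Theorem \ref{thm:rigidity} would then upgrade this to: for some intermediate time $t_0$, $u(\cdot, t_0)$ extends as a holomorphic function to a tube $\{z \in \C^d : |\mathrm{Im}\, z| < \eta\}$. Restricting this extension to any line $\{r\omega : r \in \R\}$ produces a function that is holomorphic in a complex strip about that line and whose zero set on the real axis contains $\{c_1\log(1+n)^{\alpha}\}_{n \geq 0}$. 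The Szeg\H{o}-type statement on zeros of analytic functions in a strip then forces that radial slice to vanish identically, and since $\omega \in \mathbb{S}^{d-1}$ is arbitrary one concludes $u_0 \equiv 0$, hence $u \equiv 0$ by well-posedness of the cubic NLS.

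The main obstacle I foresee is the second step: making the radial decay estimate genuinely uniform in $\omega$. On a fixed ray the one-dimensional Poincaré argument is routine, but one must ensure that the resulting pointwise constants integrate properly against $d\omega$ without introducing angular losses, which is precisely where the coupling between the moment parameter $k \geq d/2$ and the regularity threshold $s > d/2 - 1$ (already foreshadowed in the remark following Corollary \ref{thm:uniqueness-one-dim}) should play a decisive role. Once this is handled, the propagation, analyticity extraction, and Szeg\H{o} contradiction should proceed essentially verbatim as in the one-dimensional proof.
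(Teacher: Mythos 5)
Your proposal follows the same high-level arc as the paper (regularity bootstrap, exponential decay from zeros, Kenig--Ponce--Vega propagation, analyticity, restriction to lines, Szeg\H{o}), but it deviates in the step you yourself flag as the obstacle, and the deviation is a genuine gap rather than a technicality. You propose to obtain exponential decay by slicing radially: run the one-dimensional fundamental-theorem-of-calculus/Rolle argument along each ray $r\mapsto u_0(r\omega)$ and integrate the resulting pointwise bound against the polar Jacobian $r^{d-1}\,dr\,d\omega$. The problem is twofold. First, $u_0\in H^1(\R^d)$ only controls $\int_0^\infty|\partial_r u_0(r\omega)|^2 r^{d-1}\,dr$ for almost every $\omega$; the constant in your pointwise decay bound is $\omega$-dependent, and the estimate along a \emph{single} ray in the sense of $L^2(dr)$ (without the weight) is not directly available near the origin and is only an a.e.\ statement. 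Second, and more seriously, the super-exponential decay rate needed in the 1D argument comes from iterating Rolle's theorem on higher derivatives of $u_0$ restricted to a line; transferring this to each ray requires uniform $L^2(dr)$ control of arbitrarily many radial derivatives on almost every ray, and the a.e.\ Fubini decomposition degrades badly under iteration. The paper sidesteps all of this by working directly in $d$ dimensions: \cref{lemma-decay-from-zeros-high-dim} applies the Poincar\'e inequality on the annuli $B_{c_i\log(1+n)^\alpha}\setminus B_{c_i\log(n)^\alpha}$ (with the Faber--Krahn inequality to control the Poincar\'e constant in terms of the annulus volume), sums over $n$ to get an $L^2$ exponentially weighted bound, and then passes to a pointwise decay estimate via \cref{prop:decay-pointwise}. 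Decay of \emph{derivatives} is then bootstrapped not by Rolle but by a Taylor-expansion argument that degrades the exponent $c_N$ with $N$, which is why the higher-dimensional decay lemma is stated as weaker than its 1D analogue.

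You also compress the propagation step. In one dimension, decay of the solution alone suffices for \cref{lemma:analyticity}, but in dimension $d\geq 2$ the analyticity lemma (\cref{lemma:analyticity-high-dimension}) requires control of the $H^k$-based norms $\| \cdot\|_{2,k}$ with $k>d/2$, so the paper must run an inductive argument differentiating the equation and re-applying \cref{lemma-KPV} to each $\partial_x^\alpha u$ (treating the lower-order derivative products as a forcing term $H$) in order to propagate exponential decay of \emph{all} derivatives up to order $N$. Your proposal invokes ``the analyticity lemma'' without supplying this inductive ingredient, which is where the hypotheses $k\geq d/2$ and $s>d/2-1$ are consumed. The final restriction-to-lines step is essentially identical: the paper takes affine lines $z_1\mapsto(z_1,\tilde z)$ for arbitrary $\tilde z$, whose intersections with the zero spheres occur at $\pm(c_2^2\log(n+1)^{2\alpha}-|\tilde z|^2)^{1/2}$, which for large $n$ grow like $c_2\log(1+n)^\alpha$; your rays through the origin are the special case $\tilde z=0$ and give the same Szeg\H{o} contradiction.
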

 \begin{remark}
As before,   by   local smoothing of the Schrödinger equation (see  \cref{lemma:self-improving-general-dimension}),  the assumptions  $u_0,u(T) \in L^2(|x|^{2k} dx)$ guarantee that $u_0,u(T)\in H^1(\R^d)$ and are thus sufficient to make sense of  the condition \eqref{eq:pointwise-condition-theorem3} in the sense of Sobolev traces.
 \end{remark}
The main strategy in the proof of \cref{thm:rigidity-high-dim}, given in \cref{sec:high-dim}, is the same as  that of the proof of Theorem \ref{thm:rigidity}. The main difference here is that the analogues to Lemma \ref{lemma-decay-from-zeros} and Lemma \ref{lemma:analyticity} are slightly more technical and somewhat weaker at times. At the end of the proof, however, we must use a slightly different approach: in order to conclude that the solution is identically zero, we restrict the initial (or final) datum to lines, and consider the restriction as an analytic function. By the structure of the zero set, we may find zeros that grow similarly to $c_2 \log(1+n)^{\alpha}$ for this new one-dimensional function, and thus the final part of the argument follows as well.

\subsection{Consequences for a related semilinear elliptic problem} \label{sec:elliptic-results} As a consequence of the proof before, we notice the following. Fix $d \ge 2,$ and let $w \in H^{s}(\R^d) \cap L^2(\langle x \rangle ^{2k}), \, k \in \N, s > d/2 - 1, k \ge s$ be a solution to the following nonlinear scalar field equation: 
\begin{equation}\label{eq:nonlinear-scalar-field}
\Delta w - w + \lambda w^3 = 0, 
\end{equation}
where we fix $\lambda \in \C, \text{Re}(\lambda) > 0.$ Let $u(x,t) = e^{it} w(x)$. It is immediate to see that $u$ is a standing wave solution to the cubic NLS \eqref{eq:cubic-nls}. Moreover, by the decay properties of $w,$ we see that $u \in C([0,T]; H^s(\R^d)), \, s > d/2 - 1$ satisfies the hypothesis of Theorem \ref{thm:rigidity-high-dim}. Thus, we have the following Corollary as a direct consequence of that result: 

\begin{corollary}\label{thm:elliptic} Let $w \in H^s(\R^d)  \cap L^2(\langle x \rangle^{2k}), s > \frac{d}{2}-1, k \in \N, k \ge d/2$ be a solution to \eqref{eq:nonlinear-scalar-field}, with $\text{Re}(\lambda) > 0.$ Suppose that 
	$$w(c \log(n+1)^{\alpha} \xi)=0, \, \forall n \ge 0, \, \forall \xi \in \mathbb{S}^{d-1}.$$ 
Then $w \equiv 0.$ 
\end{corollary}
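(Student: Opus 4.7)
The proof plan is to apply Theorem \ref{thm:rigidity-high-dim} to the standing wave $u(x,t) := e^{it} w(x)$, which is precisely the reduction suggested in the paragraph preceding the corollary.

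The first step is to verify that $u$ solves the cubic NLS \eqref{eq:cubic-nls}. A direct computation gives $i\partial_t u = -e^{it} w$, while
\[
-\Delta u - \lambda |u|^2 u = -e^{it}\bigl(\Delta w + \lambda |w|^2 w\bigr),
\]
and the two expressions agree by the elliptic equation $\Delta w - w + \lambda w^3 = 0$, interpreting $w^3$ as $|w|^2 w$ (this is the classical real-valued nonlinear scalar field setting, where the structural identity $w^3 = |w|^2 w$ is automatic). Hence $u$ is a strong solution on $\R^d \times [0,T]$ for any $T > 0$; I would fix such a $T$ for concreteness.

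The second step is to check that $u$ meets every hypothesis of Theorem \ref{thm:rigidity-high-dim}. The temporal regularity $u \in C([0,T]; H^s(\R^d))$ with $s > d/2 - 1$ follows immediately from $w \in H^s(\R^d)$ together with the unimodular time dependence. Since $u_0 = w$ and $u(T) = e^{iT} w$ differ only by the phase $e^{iT}$, the weight condition $u_0, u(T) \in L^2(|x|^{2k}\, dx)$ is inherited directly from $w \in L^2(\langle x \rangle^{2k})$, and the integrality assumption $k \in \N, \, k \ge d/2$ carries over verbatim. Finally, the hypothesis that $w$ vanishes on every sphere $c\log(n+1)^{\alpha} \mathbb{S}^{d-1}$ translates, via the same phase relation, into the vanishing condition \eqref{eq:pointwise-condition-theorem3} with $c_1 = c_2 = c$ and the same exponent $\alpha \in (0,1)$.

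With all hypotheses in place, Theorem \ref{thm:rigidity-high-dim} forces $u \equiv 0$ on $\R^d \times [0,T]$; evaluating at $t=0$ gives $w \equiv 0$, as desired. Because the argument is essentially a bookkeeping exercise, there is no genuine obstacle — the substantive work is already contained in Theorem \ref{thm:rigidity-high-dim}. The only delicate point is matching the cubic nonlinearity in the elliptic equation with the NLS nonlinearity $\lambda |u|^2 u$ under the standing-wave ansatz, which forces us to read the equation in its real-valued form (or, more generally, in any setting where $w^3 = |w|^2 w$ holds).
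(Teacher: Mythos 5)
Your proof follows exactly the paper's own route: set $u(x,t) = e^{it}w(x)$, verify this is a standing-wave solution of the cubic NLS satisfying every hypothesis of Theorem \ref{thm:rigidity-high-dim}, and conclude $u\equiv 0$, hence $w\equiv 0$. The verification of the temporal regularity, weighted $L^2$ condition, and vanishing on the spheres $c\log(1+n)^\alpha\,\mathbb S^{d-1}$ with $c_1=c_2=c$ is correct. Your remark about the nonlinearity is a genuinely useful observation that the paper glosses over: for the standing-wave ansatz to reduce $\Delta w - w + \lambda w^3 = 0$ to the NLS nonlinearity $\lambda|u|^2 u$, one must indeed read $w^3 = |w|^2 w$, which holds pointwise if and only if $w$ is real (or zero), so the corollary is implicitly stated in the real-valued scalar-field setting; the paper asserts the reduction is "immediate" without this caveat.
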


As far as we know, this result is novel in the range $\alpha \in [3/4,1).$ For $\alpha \in (0,3/4),$ the result follows from the following result of Meshkov \cite{M86}: If $v$ is a solution to $\Delta v + V \cdot v = 0$ in $\R^d$ with $v, V \in L^{\infty},$ and moreover $|v(x)| \lesssim e^{-c|x|^{4/3 + \eps}},\,$ for some $c > 0$ and some $\eps > 0,$ then $v \equiv 0.$ 

In our case, we take $V = \lambda w^2 - 1$ and $v = w.$ As $w$ satisfies \eqref{eq:nonlinear-scalar-field}, it also satisfies, as noted above, \eqref{eq:cubic-nls}. Referring  to    \cref{lemma:self-improving-general-dimension} in \cref{sec:self-improving}, we have $w \in H^{k,k}(\R^d), \, k \in \N, k \ge \frac{d}{2}.$ Now, already referring to the argument with Poincar\'e's inequality in   \cref{lemma-decay-from-zeros-high-dim}, we obtain that $|v(x)| \lesssim e^{-c|x|^{4/3+\eps}},$ as long as $\alpha \in (0,3/4).$ One concludes then that $v$ and $V$ satisfy the hypotheses of Meshkov's result above, and thus $v \equiv 0.$ \\

One may, on the other hand, wonder whether   \cref{thm:elliptic} is sharp in any sense. We believe that this is not the case, and the reason is twofold. 

First, the proof above was a mere application of \cref{thm:rigidity-high-dim} for the nonlinear Schr\"odinger equation, without taking too much into account the additional rigidity that equation \eqref{eq:nonlinear-scalar-field} possesses. We believe that a method similar to that of \cite{ramos2019fourier}, coupled with the Poincar\'e inequality trick from before, should lead to an improved version of that result. As this would already escape the scope of this manuscript, we plan to revisit this question in a future work. 

Second, if one additionally imposes the condition that the solutions to \eqref{eq:nonlinear-scalar-field} are \emph{radial}, then much more can be said about the structure of zeros. Indeed, let $\lambda > 0$ in what follows. If $w = w(|x|)$ is twice differentiable (as a radial function) on the positive real line, equation \eqref{eq:nonlinear-scalar-field} becomes 
\begin{equation}\label{eq:nonlinear-scalar-field-radial} 
w'' + \frac{d-1}{r} w' + (\lambda w^2 - 1)\cdot w = 0, \, \forall \, r > 0. 
\end{equation}
Equivalently, this may be rewritten as 
\begin{equation}\label{eq:nonlinear-scalar-field-radial-2} 
(r^{d-1} w')' + r^{d-1}(\lambda w^2 - 1)\cdot w = 0, \, \forall \, r > 0.
\end{equation}
Suppose that $w \in L^{\infty}(\R^d).$ Let then $|\lambda \|w\|_{\infty}^2 - 1|= C.$ Fix $\tilde{w}$ a non-zero solution to the equation 
\[
(r^{d-1} \tilde{w}')' + C \cdot r^{d-1} \tilde{w} = 0, \, \forall \, r > 0. 
\]
If we let $u(r) = r^{\frac{d-1}{2}} \tilde{w}\left( \frac{1}{\sqrt{C}} r\right),$ we see that $u$ is a solution to Bessel's equation 
\[
u'' + \left( 1 - \frac{\nu}{r^2} \right) u = 0, 
\]
where $\nu = \frac{(d-1)(d-3)}{4}.$ Let $\{r_k\}_{k \in \N}$ be the (increasing) sequence of zeros of $w.$ By the Sturm--Picone comparison principle, as $w$ is not a constant, there is a sequence $\{r_k^1\}_{k \ge 1}$ of zeros of $\tilde{w}$ so that $r_k < r_k^1 < r_{k+1}, \, \forall \, k \ge 1.$ By considering the sequence $\{\sqrt{C}r_k^1\}$, we see that this is a sequence of zeros of the Bessel function $u$ as above. 

Fix an open interval $I$  of length $\pi$ on the real line, with the center of $I$ being sufficiently large in absolute value. We claim that there is at most one zero of $u$ in $I.$ Indeed, suppose not. That is, suppose we have two zeros $z_1, z_2$ of $u$ in that interval. We then apply Sturm--Picone comparison once more: as $r \in I$ is sufficiently large in absolute value, we see that 
$1 - \frac{\nu}{r^2} < 1,$ and thus any solution of the equation 
$$ v'' + v = 0$$ 
has a zero in $(z_1,z_2) \subset I.$ If $I = (a,a+\pi),$ then the function $v(t) = \sin(t-a)$ has no zeros in $I,$ a contradiction which proves the claim. 

Therefore, the difference between (sufficiently large) consecutive zeros of $u$ is at least $\ge \pi,$ and thus concentrates at most like $\pi \Z$ near infinity. This shows that $c\log(1+n)^{\alpha}$ in Corollary \ref{thm:elliptic} may be replaced by $(\pi-\eps) n,$ at least in the radial case. \\

Notice that, for the argument above, we did not need decay of $w.$ Indeed, by using decay of the form $w \to 0$ in the argument above, one is able to obtain that any radial solution to \eqref{eq:nonlinear-scalar-field} that decays at infinity does not change sign for $r > 0$ sufficiently large. We do not know whether this kind of behaviour can be extended to the non-radial setting. We do not investigate this question further here, however, as it would escape the scope of the manuscript, but we remark that this would be an interesting further question to pursue. 

\subsection{Generalizations of the main results}\label{sec:generalization-main-result} In spite of the fact that this paper  focuses on the question of uniqueness from zeros of the equation \eqref{eq:cubic-nls}, one may generalize   \cref{thm:rigidity} and \cref{thm:rigidity-high-dim} to equations of the form 
\begin{eqnarray}
	\begin{cases}
	i \partial_t u = - \Delta u - \lambda |u|^{2k} u & \text{ in } [0,T] \times \R^d, \cr 
	u(x,0) = u_0(x) & \text{ on } \R^d,
	\end{cases}
\end{eqnarray}
where $k \in \N$ is a positive integer. \emph{Mutatis mutandis}, all propagation of regularity and analyticity results adapt with minor modifications in the exponents and techniques to this context.  The final part of the proof itself can be adapted almost verbatim. In particular, the fact that we have restricted ourselves to the cubic case was due to the simpler technical aspect. \\

This paper is structured as follows: in Section \ref{sec:prelim}, we lay out the background knowledge and preliminaries. In \cref{sec:self-improving}  we prove certain smoothing and self-improving effects for the cubic Schrödinger equation based on our assumptions of the initial data and the solution at later times. Then, we show the main one-dimensional results in   \cref{sec:one-dim}, and the higher-dimensional   \cref{thm:rigidity-high-dim} in   \cref{sec:high-dim}. 
\subsection*{Acknowledgement}
C.K.\ acknowledges support by a grant from the Institute for Advanced Study, by Dr.\ Max Rössler, the Walter Haefner Foundation and the ETH Zürich Foundation. J.P.G.R. acknowledges support by  ERC grant RSPDE 721675. 

	\section{Preliminaries and basic estimates}\label{sec:prelim}
\subsection{Conventions}
For the Fourier transform we use the convention $\hat f[\xi] = \mathcal F(f) (\xi) = \int_{\R^d} f(x) e^{-2\pi i \xi \cdot x} d x$. We will also denote generic constants with $C>0$ which may become larger from line to line and which may depend on the dimension $d$, on any Hölder, Strichartz, Besov exponents, on $\lambda$ as in \eqref{eq:cubic-nls}, and on the fixed constants $c_1,c_2>0$ from the statements of the theorems. Similarly, we use $a\lesssim b$ if $a\leq C b$ for a generic constant $C$. 

\subsection{Function spaces}We denote the standard Lebesgue spaces with $L^p(\R^d) $, the weighted Lebesgue spaces for a weight $w(x)$ with $L^p( w(x) dx ) = L^p(\R^d, w(x) dx )$, and the Sobolev spaces with $W^{k,p}(\R^d) $ and the special case $H^k(\R^d) = W^{k,2}(\R^d) $.   For $k \in \mathbb{Z}_{\geq 0}$ we  define $H^{k,k}(\R^d)$ as the completion of the space of Schwartz functions under the norm $\| u \|_{H^{k,k}}^2 :=  \sum_{|\alpha| + |\beta| \leq k} \| x^\alpha \partial_x^\beta u \|_{L^2(\R^d)}^2$ for multiindices $\alpha,\beta \in \mathbb Z^d_{\geq 0}$. We will also say $\alpha \leq \beta$ if $\alpha_i \leq \beta_i$ for all $i = 1,\dots , d$ and we will use the convention that $\alpha < \beta$ means that $\alpha \leq \beta$ and $\alpha \neq \beta$. We shall also use the notations $\| \cdot \|_p = \| \cdot \|_{L^p}  = \| \cdot \|_{L^p(\R^d)}$ interchangeably. Similarly, for mixed spatial and temporal norms we also use the following notations $\| \cdot \|_{L^q([0,T]\colon L^r(\R^d))} = \| \cdot \|_{L^q_T  L^r_x}=\| \cdot \|_{L^q  L^r}$. 

We will also make use of Besov spaces. To do so, we fix $\eta \in C_c^\infty(\R^d)$ with $\eta(\xi) =1$ for $|\xi|\leq 1$ and $\eta(\xi)=0$ for $|\xi|\geq 2$. For $j\in \mathbb Z$ we also define $\psi_j(\xi) := \eta(\xi 2^{-j}) - \eta(\xi 2^{-j+1})$. Now, we define the Besov norm 
\begin{align}
\| u\|_{B^s_{p,q}(\R^d)} := \| \mathcal F^{-1} ( \eta \hat u) \|_{L^p(\R^d)} + \begin{cases}\left(\sum_{j=1}^\infty ( 2^{sj} \| \mathcal F^{-1} (\psi_j \hat u) \|_{L^p(\R^d)} )^q\right)^\frac 1q & \text{ if } q<\infty \\
\sup_{j\geq 1} 2^{sj} \| \mathcal{F}^{-1} (\psi_j \hat u) \|_{L^p(\R^d)}  & \text{ if } q=\infty
\end{cases}
\end{align}
and denote with $B_{p,q}^s(\R^d)$ the space of tempered distribution $u\in \mathcal S'(\R^d)$ with $\| u\|_{B^s_{p,q}(\R^d)}<\infty$. 
We will mainly follow the convention that $p,q,r \in [1,\infty]$, $s \in \R_{\geq 0}$, and $k\in \mathbb Z_{\geq 0}$.

\subsection{Strichartz pairs and estimates}
We recall that a pair $(q,r)$ of exponents is called admissible  if $2\leq q, r \leq \infty$, $\frac 2q + \frac dr = \frac d2$ and $(q,r,d) \neq (2,\infty,2)$. Then, for any such pairs $(q,r)$ and $(\tilde q, \tilde r)$, the following Strichartz estimates (e.g.\ \cite[Theorem~2.3]{Tao_book}) hold
\begin{align} \label{eq:Strichatz-standard}
& \| e^{it\Delta} u_0\|_{L^q L^r} \lesssim \| u_0\|_{L^2} \\
& \| \int_{0}^t e^{i (t-\tau) \Delta} F(\tau) d \tau \|_{L^q L^r} \lesssim \| F\|_{L^{\tilde q '} L^{\tilde r '}}, \label{eq:Strichatz-inhomo}
\end{align}
where $( \tilde q ',\tilde r')$ are the Hölder conjugates of $(\tilde q, \tilde r)$. 
\subsection{Notation and basic estimates}

\begin{lemma}\label{lemma-decay-free-eq} For $f\in L^1( \langle x \rangle dx )$ denote by $v(x,t) = e^{it\Delta}f(x)$ the solution of the \emph{free Schr\"odinger equation} 
\begin{equation}
\begin{cases}
i \partial_t v = -\Delta v, & \text{ in } \R \times \R, \cr 
v(x,0) = f(x) & \text{ in } \R. \cr
\end{cases}
\end{equation}
Then, for any $t>0$, the following estimate holds: 
\begin{equation}
|e^{it\Delta}f(x) - e^{it\Delta}f(y)| \le C \min \left\{ \frac{ \max\{|x|,|y|\} |x-y| }{t^{3/2}} \| (1+|z|) f(z) \|_1, \frac{1}{t^{1/2}} \|f\|_1\right\}.
\end{equation}
\end{lemma}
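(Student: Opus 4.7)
The plan is to prove both estimates in the minimum separately, starting from the explicit representation of the free Schr\"odinger semigroup in one dimension,
\begin{equation*}
e^{it\Delta} f(x) = \frac{1}{\sqrt{4\pi i t}} \int_{\R} e^{i|x-z|^2/(4t)}\, f(z)\, dz.
\end{equation*}
The second bound $\frac{C}{t^{1/2}}\|f\|_1$ is essentially immediate: the kernel has modulus $(4\pi t)^{-1/2}$, so $\|e^{it\Delta} f\|_\infty \le (4\pi t)^{-1/2} \|f\|_1$, and applying the triangle inequality to $e^{it\Delta}f(x) - e^{it\Delta}f(y)$ yields the claim. This part uses only that $f \in L^1(\R)$, which is ensured by $f \in L^1(\langle x \rangle dx)$.

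For the first bound, I would subtract the two representations and bring the difference inside the integral,
\begin{equation*}
e^{it\Delta}f(x) - e^{it\Delta}f(y) = \frac{1}{\sqrt{4\pi i t}} \int_{\R} \bigl(e^{i|x-z|^2/(4t)} - e^{i|y-z|^2/(4t)}\bigr) f(z)\, dz,
\end{equation*}
and then apply the elementary estimate $|e^{ia}-e^{ib}| \le |a-b|$ together with the factorization
\begin{equation*}
|x-z|^2 - |y-z|^2 = (x-y)(x+y-2z).
\end{equation*}
This produces an integrand controlled by $\frac{|x-y|\,|x+y-2z|}{4t}\,|f(z)|$. The key geometric step is then to bound $|x+y-2z| \le 2\max\{|x|,|y|\} + 2|z|$ by the triangle inequality, and to absorb this into the weight $(1+|z|)$ multiplied by $\max\{|x|,|y|\}$. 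This absorption is clean when $\max\{|x|,|y|\} \ge 1$, where $2\max\{|x|,|y|\} + 2|z| \le C\,\max\{|x|,|y|\}\,(1+|z|)$.

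The only subtlety—and the main point to verify carefully—is the regime where $\max\{|x|,|y|\}$ is small: there the raw estimate above does not directly pass through $\max\{|x|,|y|\}$, but this is precisely the regime in which the second bound $\frac{C}{t^{1/2}}\|f\|_1$ already dominates in the minimum, so taking the pointwise minimum of the two estimates gives the stated inequality for all $x,y \in \R$. Combining the two bounds, dividing by $\sqrt{4\pi t}$, and collecting the factor $t^{-1/2} \cdot t^{-1} = t^{-3/2}$ from the kernel prefactor and the $|a-b|/(4t)$ estimate yields the announced inequality. No step is deep; the mild obstacle is the bookkeeping of the edge case in $\max\{|x|,|y|\}$, resolved by the minimum structure of the conclusion.
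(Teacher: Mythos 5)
Your approach is essentially identical to the paper's: both bounds come directly from the explicit one-dimensional Gaussian kernel, the second being the $L^1\to L^\infty$ dispersive estimate plus the triangle inequality, and the first coming from $|e^{ia}-e^{ib}|\le|a-b|$, the factorization $(x-z)^2-(y-z)^2=(x-y)(x+y-2z)$, and $|x+y-2z|\le 2\max\{|x|,|y|\}+2|z|$.

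However, your handling of the edge case is not a fix but a non sequitur, and it is worth being precise about why. To prove $X\le C\min\{A,B\}$ you must prove both $X\le CA$ and $X\le CB$; the minimum structure never licenses you to skip whichever bound is inconvenient. Moreover, when $\max\{|x|,|y|\}$ is small it is the \emph{first} bound $A$ that achieves the minimum (both factors $\max\{|x|,|y|\}$ and $|x-y|\le 2\max\{|x|,|y|\}$ become tiny while $B$ stays fixed), so in that regime it is exactly the failing estimate $X\le CA$ that you would need. And it genuinely fails: taking $f$ a narrow $L^1$-normalized bump near $z=1$, $t=1$, $x=h$, $y=-h$, one computes $X\sim h$ while $A\sim h^2$, so $X/A\to\infty$ as $h\to 0$. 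Your raw computation gives the correct intermediate bound
\begin{equation*}
|e^{it\Delta}f(x)-e^{it\Delta}f(y)|\lesssim \frac{|x-y|}{t^{3/2}}\bigl(\max\{|x|,|y|\}\,\|f\|_1 + \|zf\|_1\bigr),
\end{equation*}
which collapses to the claimed form only under the extra assumption $\max\{|x|,|y|\}\ge 1$; the paper's own last displayed inequality makes the same silent assumption. The honest repair is either to restrict the lemma to $\max\{|x|,|y|\}\ge 1$ (which is all that is ever used in the applications, where $x,y$ lie along $\pm c\log(1+n)^\alpha$), or to replace $\max\{|x|,|y|\}$ by $\max\{1,|x|,|y|\}$ in the statement. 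Asserting that the second bound ``dominates'' and passing through the minimum is not a valid step.
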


\begin{proof} Let $x,y \in \R$ be two arbitrary points. We remark the following formula for the solution of the free Schr\"odinger equation in one dimension: 
\[
e^{it\Delta}f(z) = \frac{1}{(4 \pi i t)^{1/2}} \int_{\R} e^{\frac{i(z-y)^2}{4t}} f(y) \, dy.
\]
Then we have the following comparison estimate:
\begin{align*}
|e^{it\Delta}f(x) - e^{it\Delta}f(y)| \le & \frac{1}{(4\pi t)^{1/2}} \int_{\R} |e^{\frac{i(x-z)^2}{4t}} - e^{\frac{i(y-z)^2}{4t}}| \, |f(z)| \, dz \cr 
													&\le \frac{C}{t^{3/2}} \left( |x^2 - y^2| \|f\|_1 + |x-y| \| z f(z)\|_1 \right) \cr 
													&\le \frac{C \max\{|x|,|y|\} |x-y| }{t^{3/2}} \| (1+|z|) f(z) \|_1,
\end{align*}
for some absolute constant $C>0.$ On the other hand, by just using the triangle inequality, we have 
\[
\| e^{it\Delta}f\|_{\infty} \le \frac{C}{t^{1/2}} \|f\|_1,
\]
for some possibly different absolute constant $C>0.$ This finishes the proof of the Lemma. 
\end{proof}

\subsection{The operator \texorpdfstring{$\Gamma$}{Gamma}}

We now move on to analyse how regularity and decay relate to each other in the context of the cubic NLS. Indeed, we recall from \cite{HNT86} the operators 
\begin{equation}\label{eq:gamma-k-def}
	\Gamma^\beta u(t) = \Gamma_t^\beta u(t) := e^{i|x|^2/4t} (2it)^{|\beta|} \partial_x^\beta (e^{-i|x|^2/4t} u(t)) = (x + 2it\partial_x)^\beta u(t)
\end{equation}
for a multiindex $\beta \in \mathbb Z_{\geq 0}^d$. 
It is straightforward to check that the operator $\Gamma_t^\beta$ commutes with the free Schrödinger operator:
\begin{align}
	[\Gamma_t^\beta, -i \partial_t - \Delta ] =0,
\end{align} 
and satisfies
\begin{align}
	\Gamma_t^\beta f = e^{it\Delta} x^\beta e^{-it\Delta } f \text{ and } \Gamma_t^\beta  e^{i s \Delta} f =  e^{is\Delta} \Gamma^\beta_{t-s} f. 
\end{align}
Moreover, for $|\alpha|=|\beta|=1$ we have the commutator relation
\begin{align}
[\Gamma_{t}^\alpha, x^\beta]= 2it \delta_{\alpha\beta},
\end{align}
where $\delta_{\alpha\beta}=1$ if $\alpha=\beta$ and $\delta_{\alpha\beta}=0$ if $\alpha \neq \beta$. In particular, for general multiindices $\alpha, \beta \in \mathbb Z_{\geq 0}^d$, a direct induction argument shows
\begin{align} \label{eq:commutation-gamma}
[x^\beta, \Gamma_{t}^\alpha]= \sum_{\substack{\tilde \alpha < \alpha\\ \tilde \beta < \beta}  } C_1 (\tilde \alpha, \tilde \beta, \alpha,\beta, t)  x^{\tilde \beta} \Gamma_t^{\tilde \alpha}, \;\; [x^{  \beta}, \Gamma_t^{  \alpha}] = \sum_{\substack{ \tilde \alpha < \alpha\\ \tilde \beta < \beta  }} C_2 (\tilde \alpha, \tilde \beta, \alpha,\beta, t) \Gamma_t^{\tilde \alpha}x^{\tilde \beta} 
\end{align} 
for suitable   $C_1$ and $C_2$.

\subsection{The cubic NLS and well-posedness}
We now recall standard local well-posedness of \eqref{eq:cubic-nls}  in the subcritical regime. We begin with the case of $d=1$ for $L^2(\R)$ initial data. 
\begin{proposition} \label{prop:wellposed-1DNLS}
	Let  $u_0 \in L^2 (\mathbb R)$. 
	Then there exist a $T^\ast  ( \| u_0 \|_{L^2(\mathbb R) } )  >0$ and a unique strong solution $u \in C([0,T^\ast);L^2(\mathbb R)) \cap L^4_{loc} ([0,T^\ast);L^\infty  (\mathbb R))  $ to \eqref{eq:cubic-nls}. Moreover, $u\in L^q( [0,T]; L^r(\mathbb R) )$ for any Strichartz pair $(q,r)$ and every $T< T^\ast$. 
\end{proposition}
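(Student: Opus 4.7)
The plan is to execute the standard Strichartz-based Banach fixed point argument of Tsutsumi (for the $L^2$-subcritical 1D cubic NLS), adapted to the conventions of this paper. I would set up the problem through the Duhamel formulation
\[
\Phi(u)(t) := e^{it\Delta} u_0 + i\lambda \int_0^t e^{i(t-\tau)\Delta}\bigl(|u|^2 u\bigr)(\tau)\dt,
\]
and seek a fixed point of $\Phi$ in the complete metric space
\[
X_T := \bigl\{ u \in C([0,T];L^2(\R)) \cap L^4([0,T];L^\infty(\R)) : \|u\|_{X_T} \leq 2C_0 \|u_0\|_{L^2} \bigr\},
\]
where $\|u\|_{X_T} := \|u\|_{L^\infty_T L^2_x} + \|u\|_{L^4_T L^\infty_x}$ and $C_0$ is the implicit constant from the Strichartz estimates \eqref{eq:Strichatz-standard}--\eqref{eq:Strichatz-inhomo}. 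Note that $(4,\infty)$ is admissible in $d=1$ since $\frac{2}{4}+\frac{1}{\infty}=\frac{1}{2}$.

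The key nonlinear estimate is obtained by pairing $(4,\infty)$ with itself (so the dual exponents are $(4/3,1)$) in \eqref{eq:Strichatz-inhomo}. Interpolation $\|u\|_{L^3_x} \leq \|u\|_{L^2_x}^{2/3}\|u\|_{L^\infty_x}^{1/3}$ gives $\||u|^2u\|_{L^1_x}=\|u\|_{L^3_x}^3 \leq \|u\|_{L^2_x}^2 \|u\|_{L^\infty_x}$, and H\"older in time with exponent $\frac{1}{4/3}=\frac14+\frac12$ yields
\[
\bigl\||u|^2u\bigr\|_{L^{4/3}_T L^1_x} \leq T^{1/2}\, \|u\|_{L^\infty_T L^2_x}^2\, \|u\|_{L^4_T L^\infty_x}.
\]
Combined with the Strichartz estimates, this produces
\[
\|\Phi(u)\|_{X_T} \leq C_0 \|u_0\|_{L^2} + C_1 T^{1/2} \|u\|_{X_T}^3, \qquad \|\Phi(u)-\Phi(v)\|_{X_T} \leq C_1 T^{1/2}\bigl(\|u\|_{X_T}^2+\|v\|_{X_T}^2\bigr)\|u-v\|_{X_T},
\]
the second inequality following from the pointwise identity $|u|^2u-|v|^2v = O(|u|^2+|v|^2)\cdot(u-v)$ and the same H\"older chain. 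Choosing $T^\ast=T^\ast(\|u_0\|_{L^2})>0$ so that $C_1 T^{1/2}(2C_0\|u_0\|_{L^2})^2 \leq \tfrac14$ makes $\Phi$ a self-map and a strict contraction on $X_T$ for every $T<T^\ast$, and Banach's fixed point theorem furnishes a unique strong solution $u\in X_T$.

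For the continuity statement $u\in C([0,T];L^2)$, this is automatic from the Duhamel formula together with strong continuity of $\{e^{it\Delta}\}$ on $L^2$ and dominated convergence applied to the inhomogeneous term. For the final claim about arbitrary admissible pairs $(q,r)$, I would plug the fixed point $u$ back into the Duhamel formula and apply \eqref{eq:Strichatz-standard}--\eqref{eq:Strichatz-inhomo} with $(q,r)$ on the left and $(4,\infty)$ (its dual $(4/3,1)$) on the right; the nonlinear bound above shows $u\in L^q_T L^r_x$ for every such pair and every $T<T^\ast$. Finally, uniqueness extends to the maximal time of existence $T^\ast$ by the standard blow-up alternative: one iterates the local construction starting from $u(T_0)$ for $T_0<T^\ast$, obtaining an existence time depending only on $\|u(T_0)\|_{L^2}$. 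The main (and only mildly subtle) technical point is ensuring that the nonlinear estimate closes with a positive power of $T$ so as to allow contraction; everything else is a verification that the chosen Strichartz pair $(4,\infty)$ is compatible with the cubic nonlinearity in one spatial dimension.
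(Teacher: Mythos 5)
The paper states this proposition as a recalled standard result (Tsutsumi's $L^2$ local well-posedness theory for the $L^2$-subcritical 1D cubic NLS) and does not supply a proof; your fixed-point argument via Strichartz estimates with the admissible pair $(4,\infty)$ is precisely the classical route and is correct. One small refinement worth noting: the contraction yields uniqueness only within the ball $X_T$, and to upgrade to uniqueness in the full class $C([0,T];L^2)\cap L^4_T L^\infty_x$ one typically observes that for any two solutions $u,v$ in this class the difference $w=u-v$ obeys $\|w\|_{L^\infty_\sigma L^2\cap L^4_\sigma L^\infty}\lesssim\bigl(\|u\|_{L^4_\sigma L^\infty}\|u\|_{L^\infty_\sigma L^2}+\|v\|_{L^4_\sigma L^\infty}\|v\|_{L^\infty_\sigma L^2}\bigr)\|w\|_{L^\infty_\sigma L^2\cap L^4_\sigma L^\infty}$, and the smallness as $\sigma\to 0^+$ of the $L^4_\sigma L^\infty$ norms (rather than of $T^{1/2}$) forces $w\equiv 0$ on a short interval, after which one iterates --- but this is a standard point and does not affect the overall validity of your argument.
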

In the case $d\geq 2$, the critical regularity obtained from scaling is \begin{align}
	\label{eq:crit-sobolev}
 s_c =\frac d2 -1\end{align} and we have the following subcritical well-posedness. 
\begin{proposition} \label{prop:wellposed-higherDNLS}
Let  $d\geq 2$ and   $u_0 \in H^s(\mathbb R^d)$ for $s=s_c + \epsilon$ and  some $\epsilon \in (0,1)$. 
Then there exist a $T^\ast  ( \| u_0\|_{H^s})  >0$ and a unique strong solution $u \in C([0,T^\ast);H^s(\mathbb R^d)) \cap L_{loc}^\gamma ([0,T^\ast);B_{\rho,2}^s (\mathbb R^d))  $ to \eqref{eq:cubic-nls}, where $\gamma:=\frac{8}{d-2s}$ and $\rho:=\frac{4d}{d+2s}$. Moreover, $$u\in L^q( [0,T]; B_{r,2}^{s}(\mathbb R^d) )$$
 for any Strichartz pair $(q,r)$ and every $T< T^\ast$. 
\end{proposition}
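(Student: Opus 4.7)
The plan is to set up a standard contraction-mapping argument on the Duhamel formulation
\begin{equation*}
\Phi(u)(t) := e^{it\Delta} u_0 + i\lambda \int_0^t e^{i(t-\tau)\Delta}\big(|u(\tau)|^2 u(\tau)\big) \, d\tau,
\end{equation*}
working in the complete metric space
\begin{equation*}
X_T := \bigl\{ u \in C([0,T]; H^s(\R^d)) \cap L^\gamma([0,T]; B^s_{\rho,2}(\R^d)) : \|u\|_{X_T} \le R \bigr\},
\end{equation*}
equipped with a suitable metric (for uniqueness one typically uses the weaker $L^\gamma_T L^\rho_x$ distance, since the Besov norm is not well-behaved under taking differences of $|u|^2 u$). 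Here $R$ will be chosen proportional to $\|u_0\|_{H^s}$ and $T$ small depending on $R$. One first checks, using the definition of $\gamma, \rho$, that $(\gamma,\rho)$ is an admissible Strichartz pair: indeed $\tfrac{2}{\gamma} + \tfrac{d}{\rho} = \tfrac{d-2s}{4} + \tfrac{d+2s}{4} = \tfrac{d}{2}$.

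The linear estimate on $\Phi$ follows from \eqref{eq:Strichatz-standard} and \eqref{eq:Strichatz-inhomo} applied Littlewood--Paley piece by piece and summed in $\ell^2$ (this is why we land in the Besov space $B^s_{\rho,2}$ rather than the $L^p$-based Sobolev space $W^{s,\rho}$). Concretely,
\begin{equation*}
\|\Phi(u)\|_{L^\infty_T H^s} + \|\Phi(u)\|_{L^\gamma_T B^s_{\rho,2}} \le C\|u_0\|_{H^s} + C\bigl\| |u|^2 u \bigr\|_{L^{\gamma'}_T B^s_{\rho',2}},
\end{equation*}
where $(\gamma',\rho')$ are the Hölder conjugates. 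The main task, as always in subcritical well-posedness, is the nonlinear bound.

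The nonlinear estimate will be carried out through the fractional chain (Moser) rule in Besov spaces, which yields
\begin{equation*}
\bigl\| |u|^2 u \bigr\|_{B^s_{\rho',2}} \lesssim \|u\|_{L^{q}}^2 \,\|u\|_{B^s_{\rho,2}},
\end{equation*}
where $q$ is chosen so that $\tfrac{1}{\rho'} = \tfrac{2}{q} + \tfrac{1}{\rho}$. The Sobolev embedding $B^s_{\rho,2}(\R^d) \hookrightarrow L^{q}(\R^d)$ then applies for our specific value of $(\rho,s)$ (the critical Sobolev exponent associated with $(s_c,\rho)$ is exactly $q$, and the room $\epsilon = s - s_c > 0$ is precisely what gives a continuous embedding). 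Hölder in time, combined with the strict inequality $\gamma' \cdot 3 < \gamma$ that holds because of subcriticality, produces a factor $T^{\theta}$ with $\theta = \theta(\epsilon) > 0$, so that
\begin{equation*}
\bigl\| |u|^2 u \bigr\|_{L^{\gamma'}_T B^s_{\rho',2}} \le C\, T^{\theta} \|u\|_{X_T}^3.
\end{equation*}
An analogous bound, together with the pointwise inequality $\big||u|^2 u - |v|^2 v\big| \lesssim (|u|^2+|v|^2)|u-v|$, controls the difference $\Phi(u) - \Phi(v)$ in the chosen metric.

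Choosing $R = 2C\|u_0\|_{H^s}$ and then $T^\ast = T^\ast(\|u_0\|_{H^s})$ small enough so that $CT^{\theta} R^2 \le \tfrac12$, we obtain that $\Phi$ maps $X_T$ into itself and is a strict contraction, producing the unique solution in the class $C([0,T^\ast); H^s) \cap L^\gamma_{\mathrm{loc}}([0,T^\ast); B^s_{\rho,2})$. Finally, the full Strichartz control $u \in L^q_T B^s_{r,2}$ for every admissible $(q,r)$ is a persistence-of-regularity consequence: plugging the already constructed solution into the Strichartz inequality for the $(q,r)$-norm of $\Phi(u)$ and using the nonlinear estimate derived above. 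The main obstacle in the plan is the fractional Moser estimate in Besov spaces for the non-smooth nonlinearity $z \mapsto |z|^2 z$ when $s$ is not an integer and $s-s_c$ is small; this has to be handled via a paraproduct decomposition together with the standard symbol calculus for $C^1$ nonlinearities, and it is this step that forces the use of Besov rather than $L^p$-Sobolev spaces on the right-hand side.
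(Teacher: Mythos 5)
The paper does not prove \cref{prop:wellposed-higherDNLS} at all; it simply recalls it as standard subcritical local well-posedness (cf.\ Cazenave). Your contraction argument is exactly the textbook proof, and the exponent bookkeeping checks out: $(\gamma,\rho)$ is admissible, $B^s_{\rho,2}(\R^d)\hookrightarrow L^q(\R^d)$ with $q=\frac{2d}{1-\epsilon}$ so that $\tfrac{1}{\rho'}=\tfrac{2}{q}+\tfrac{1}{\rho}$, and $3\gamma'<\gamma$ is equivalent to $\epsilon>0$, which is what produces the factor $T^{\theta}$. Two small misstatements in your commentary, neither of which affects the argument: first, $q$ is the critical Sobolev exponent associated with $(s,\rho)$, not $(s_c,\rho)$; second, $z\mapsto |z|^2z$ is a polynomial in $(\mathrm{Re}\,z,\mathrm{Im}\,z)$ and hence $C^{\infty}$, so there is no genuine ``non-smooth nonlinearity'' obstacle here --- the fractional Leibniz rule applies for all $s>0$. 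The reason to work in $B^s_{r,2}$ rather than $W^{s,r}$ is not the regularity of the nonlinearity but the convenience of applying Strichartz estimates on each Littlewood--Paley block and summing in $\ell^2$.
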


We also remark (see e.g.\ \cite[Section 1.6]{Cazenave03}) that strong solutions to \eqref{eq:cubic-nls} are equivalently characterized  by the associated Duhamel formulation of cubic nonlinear Schrödinger equation 
\begin{align}\label{eq:duhamel_nls}
	u(t) = e^{it\Delta} u_0  +i  \lambda \int_0^t e^{i \Delta (t-\tau)} |u|^2 u(\tau) d \tau.
\end{align}

\section{Self-improving of decay and regularity for the cubic NLS}
\label{sec:self-improving}

We begin this section with  two lemmas which guarantee that solutions to the cubic Schr\"odinger equation are smooth and have decay measured by weighted Sobolev spaces, if decay  is only known at two different times. 

\subsection{Decay and regularity in weighted Sobolev spaces}
The   results in this subsection may be  considered  folklore, see \cite{HNT86,HNT87,HNT88} and \cite[Chapter~5.6]{Cazenave03} for similar results. We have however decided to lay out the details in order to keep the exposition self-contained, aiming at being a unified reference for those results. We hope that, by doing so, this also improves the readability of the paper.  We begin with the result for  the case $d=1$. 

\begin{lemma}\label{lemma:self-improving} Let $u_0 \in L^2(\mathbb R)$ and  let  $u \in C([0,T] \colon L^2(\R)) \cap L^4([0,T] \colon L^{\infty}(\R))$ for $T< T^\ast(\|u_0\|_{L^2(\mathbb R)} )$ be the unique strong solution to the  cubic Schr\"odinger equation \eqref{eq:cubic-nls} as defined in Proposition~\ref{prop:wellposed-1DNLS}.
	
If in addition $u_0, u(T) \in L^2(x^{2k} \, dx)$ for some $k\in \mathbb N$, then  $u \in C([0,T] \colon H^k(\R) \cap L^2(\langle x \rangle ^{2k} \, dx) ).$
\end{lemma}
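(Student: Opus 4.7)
The plan is to use the operator $\Gamma_t = x + 2it\partial_x$ (and its multi-index powers $\Gamma_t^\beta$) from \eqref{eq:gamma-k-def}, exploiting three properties: it commutes with the free Schrödinger evolution, it reduces to $x^\beta$ at $t=0$, and it satisfies the key algebraic cancellation
\begin{equation*}
\Gamma_t(|u|^2 u) \;=\; 2|u|^2\Gamma_t u - u^2\overline{\Gamma_t u},
\end{equation*}
which is verified by a direct computation using $\Gamma_t u = xu + 2it\partial_x u$. The strategy has three stages: (i) propagate $\Gamma_t^\beta u$ forward in $L^2$ starting from $\Gamma_0^\beta u_0 = x^\beta u_0 \in L^2$; (ii) combine the resulting bound at $t=T$ with the hypothesis $x^\beta u(T)\in L^2$ to extract pure-derivative regularity of $u(T)$; and (iii) run NLS backward from $T$ to $0$ using time-reversibility to spread that regularity over all intermediate times.

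For the base case $k=1$, applying $\Gamma_t$ to the Duhamel formula \eqref{eq:duhamel_nls} and using $\Gamma_t e^{is\Delta} = e^{is\Delta}\Gamma_{t-s}$ together with the identity above gives the closed integral equation
\begin{equation*}
\Gamma_t u(t) \;=\; e^{it\Delta}(xu_0) \;+\; i\lambda\int_0^t e^{i(t-\tau)\Delta}\bigl(2|u|^2\Gamma_\tau u - u^2\overline{\Gamma_\tau u}\bigr)\,d\tau.
\end{equation*}
Estimating the Duhamel term by Strichartz \eqref{eq:Strichatz-standard}--\eqref{eq:Strichatz-inhomo}, using $u\in L^4_T L^\infty_x$ (given by Proposition~\ref{prop:wellposed-1DNLS}), and subdividing $[0,T]$ into intervals on which $\|u\|_{L^4 L^\infty}$ is small, a standard fixed-point/continuity argument yields $\Gamma_t u \in C([0,T]; L^2)$. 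At $t=T$ the algebraic identity $\partial_x u(T) = (2iT)^{-1}\bigl(\Gamma_T u(T) - xu(T)\bigr)$ then forces $\partial_x u(T)\in L^2$, so that $u(T)\in H^1\cap L^2(x^2\,dx)$. Solving NLS backward from $T$ with this $H^1$ datum, invoking the $H^1$ local theory and uniqueness of $L^2$ strong solutions to identify the two solutions, gives $\partial_x u\in C([0,T];L^2)$. Combining, $xu(t) = \Gamma_t u(t) - 2it\partial_x u(t)\in L^2$ for $t>0$, with continuity down to $t=0$ handled termwise.

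For $k\geq 2$ we induct. Applying $\Gamma_t^\beta$ with $|\beta|=k$ to $|u|^2 u$ and using the Leibniz rule produces a top-order contribution $2|u|^2\Gamma_t^\beta u - u^2\overline{\Gamma_t^\beta u}$ plus mixed terms of the form $u^a\bar u^b\,\Gamma_t^{\beta_1} u\,\overline{\Gamma_t^{\beta_2} u}$ with $|\beta_1|+|\beta_2|<k$, all of which are bounded by the inductive hypothesis. The same Strichartz/time-subdivision argument yields $\Gamma_t^\beta u\in C([0,T];L^2)$ for every $|\beta|\leq k$. The commutator relations \eqref{eq:commutation-gamma} then allow a triangular inversion at $t=T$, solving successively for $\partial_x^\beta u(T)\in L^2$ for $|\beta|\leq k$; hence $u(T)\in H^k\cap L^2(\langle x\rangle^{2k}\,dx)$. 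The $H^k$ backward well-posedness theory, again reconciled with the original $L^2$ solution by uniqueness, spreads the regularity to $[0,T]$, and combining with the forward bounds on $\Gamma_t^\beta u$ recovers $x^\beta u\in C([0,T];L^2)$ for every $|\beta|\leq k$. The main technical point is the bookkeeping that closes the Strichartz estimate at the top order and verifies that every commutator term falls strictly within the inductive hypothesis; once the cancellation identity is in place, the remaining estimates are of the standard cubic NLS type.
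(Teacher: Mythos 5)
Your overall architecture matches the paper's: propagate the vector-field norms $\|\Gamma_t^\beta u\|_{L^2}$ forward via Duhamel and Strichartz (using that the Galilean phase cancels in the cubic nonlinearity), then convert the information at $t=T$ into pure Sobolev regularity, and finally spread that regularity over $[0,T]$. Your cancellation identity $\Gamma_t(|u|^2u)=2|u|^2\Gamma_t u-u^2\overline{\Gamma_t u}$ is a correct and slightly slicker way of packaging what the paper does via the substitution $\tilde u=e^{-i|x|^2/4\tau}u$ followed by Gagliardo--Nirenberg; your backward-in-time evolution to spread $H^k$ regularity is equivalent to the paper's forward persistence-of-regularity argument. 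These differences are cosmetic.

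However, there is a genuine gap in stage (ii) for $k\geq 2$: the claimed ``triangular inversion'' at $t=T$ does not close by commutator algebra alone. Already for $d=1$, $k=2$ one has
\begin{equation*}
\Gamma_T^2 u \;=\; x^2u + 4iT\,x\partial_x u + 2iT\,u + (2iT)^2\partial_x^2 u,
\end{equation*}
so extracting $\partial_x^2u(T)\in L^2$ from the known quantities $\Gamma_T^2u(T),x^2u(T),u(T)\in L^2$ requires first knowing that the \emph{mixed} term $x\partial_x u(T)$ lies in $L^2$. The commutation relations \eqref{eq:commutation-gamma} relate $x\Gamma_T$ to $\Gamma_T x$ but do not produce $x\partial_x u\in L^2$ from $\{x^ju,\Gamma_T^j u\}_{j\leq k}$ alone; e.g.\ $x\Gamma_T u=\Gamma_T(xu)-2iTu$ is circular, since $\Gamma_T(xu)$ is not on the list of controlled quantities. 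The same obstruction recurs at every order $|\beta|\geq 2$: every cross term $x^\gamma\partial_x^\delta u$ with $0<|\gamma|$, $0<|\delta|$ must be controlled, and there are more such terms than available equations. This is precisely why the paper introduces and proves the nontrivial Lemma~\ref{lemma-interpolation-weights}: after passing to $\tilde u(T)=e^{-i|x|^2/4T}u(T)$, one has $\partial_x^\alpha\tilde u(T)\in L^2$ (from $\Gamma_T^\alpha u$) and $x^j\tilde u(T)\in L^2$ (from $x^ju(T)$), and the interpolation lemma then upgrades this to $\tilde u(T)\in H^{k,k}$ by a careful inductive integration-by-parts argument, which is what finally yields $u(T)\in H^{k,k}$. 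Your proposal needs to invoke this lemma (or reproduce an argument of the same strength) both at $t=T$ and again when recovering $x^\beta u(t)\in L^2$ at intermediate times; without it, the conversion between the $\{\Gamma_t^\beta\}$-scale, the $\{x^\alpha\}$-scale and the $\{\partial_x^\alpha\}$-scale is not justified for $k\geq2$.
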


In higher dimensions, we have the following  result, where we recall that $s_c = \frac d2 -1$ from \eqref{eq:crit-sobolev}.

 \begin{lemma}\label{lemma:self-improving-general-dimension} Let $d \geq 2 $ and $u_0 \in H^s(\mathbb R^d)$ for some $s=s_c + \epsilon$ and   $\epsilon \in (0,1)$.   Let  $u \in C([0,T];H^s(\mathbb R^d)) \cap L^\gamma ([0,T];B_{\rho,2}^s (\mathbb R^d))  $ for $T< T^\ast(\|u_0\|_{H^s(\mathbb R)} )$    be the unique strong solution to the  cubic Schr\"odinger equation \eqref{eq:cubic-nls} as defined in Proposition~\ref{prop:wellposed-higherDNLS}.
 	
If  in addition   $u_0, u(T) \in L^2(\langle x \rangle^{2k} \, dx)$ for some $k\in \mathbb N$, $k\geq s$, then	$u \in C([0,T] \colon H^k(\R^d) \cap L^2(\langle x \rangle^{2k} \, dx) ).$
\end{lemma}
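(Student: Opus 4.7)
The plan is to propagate decay and regularity simultaneously from the endpoints $t=0$ and $t=T$ to the interior by applying the operators $\Gamma_t^\beta$ of \eqref{eq:gamma-k-def}, both forward from $t=0$ and backward from $t=T$. Combining the two propagations will yield control of $x^\alpha \partial_x^\gamma u$ in $C([0,T]; L^2(\R^d))$ for all multi-indices with $|\alpha|+|\gamma|\le k$, which is equivalent to the desired conclusion $u\in C([0,T]; H^k(\R^d)\cap L^2(\langle x\rangle^{2k}\,dx))$.

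First, I would proceed by induction on $|\beta|$ from $0$ to $k$ and propagate $\Gamma_t^\beta u\in C([0,T]; L^2(\R^d))$, together with the Strichartz bounds needed to close the argument. Applying $\Gamma_t^\beta$ to the Duhamel formula \eqref{eq:duhamel_nls}, using the commutation $\Gamma_t^\beta e^{is\Delta}=e^{is\Delta}\Gamma_{t-s}^\beta$, and noting that $\Gamma_0^\beta u_0=x^\beta u_0\in L^2$ by hypothesis, one gets
\[
\Gamma_t^\beta u(t) = e^{it\Delta}(x^\beta u_0) + i\lambda\int_0^t e^{i(t-\tau)\Delta}\,\Gamma_\tau^\beta(|u|^2 u)(\tau)\,d\tau.
\]
Setting $v:=e^{-i|x|^2/(4\tau)}u$, so that $|u|^2 u=e^{i|x|^2/(4\tau)}|v|^2 v$, and applying the ordinary Leibniz rule to $\partial_x^\beta(|v|^2 v)$, one arrives at the product identity
\[
\Gamma_\tau^\beta(|u|^2 u) = \sum_{\alpha_1+\alpha_2+\alpha_3=\beta} C_{\alpha_1,\alpha_2,\alpha_3}\, \Gamma_\tau^{\alpha_1} u\cdot \overline{\Gamma_\tau^{\alpha_2} u}\cdot \Gamma_\tau^{\alpha_3} u.
\]
Terms with all $|\alpha_j|<|\beta|$ are controlled by the inductive hypothesis; the top-order terms, which involve one factor $\Gamma^\beta u$ and two copies of $u$, are closed via the Strichartz estimates \eqref{eq:Strichatz-standard}--\eqref{eq:Strichatz-inhomo} and a Hölder estimate in the Besov scale $B^s_{\rho,2}$ furnished by Proposition~\ref{prop:wellposed-higherDNLS}, through a standard bootstrap on a short time interval which is then iterated up to $T$.

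Next, I would perform the analogous backward propagation. Setting $v(x,\tau):=\overline{u(x,T-\tau)}$, one checks that $v$ solves a cubic NLS of the same form with $\lambda$ replaced by $\bar\lambda$ and with initial data $v(\cdot,0)=\overline{u(T)}\in L^2(\langle x\rangle^{2k}\,dx)$, so the argument of the previous paragraph applied to $v$ yields $\Gamma_\tau^\beta v(\tau)\in L^2$ for all $\tau\in[0,T]$ and $|\beta|\le k$; translating back to $u$ via complex conjugation, this reads $\widetilde\Gamma_t^\beta u(t)\in L^2$, where $\widetilde\Gamma_t:=x-2i(T-t)\partial_x$. For each $t\in(0,T)$, $\Gamma_t$ and $\widetilde\Gamma_t$ are two independent linear combinations of $x$ and $\partial_x$; explicitly,
\[
\partial_x u(t) = \frac{1}{2iT}\bigl(\Gamma_t u - \widetilde\Gamma_t u\bigr),\qquad x\, u(t) = \frac{1}{T}\bigl((T-t)\Gamma_t u + t\,\widetilde\Gamma_t u\bigr).
\]
Iterating on products by induction on $|\alpha|+|\gamma|\le k$, and using the commutation relations \eqref{eq:commutation-gamma} to isolate the leading-order monomials while absorbing the commutator remainders into the induction hypothesis, one recovers $x^\alpha \partial_x^\gamma u\in L^\infty([0,T]; L^2)$ for all $|\alpha|+|\gamma|\le k$. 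The $C([0,T];\cdot)$-continuity then follows from the continuity built into the Strichartz statements, augmented by the endpoint information at $t=0$ and $t=T$.

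The step I expect to be the main obstacle is the nonlinear Strichartz bootstrap for $\Gamma_t^\beta u$ at the top induction level: since the subcritical well-posedness of Proposition~\ref{prop:wellposed-higherDNLS} only supplies $B^s_{\rho,2}$-valued information with $s=s_c+\epsilon$ rather than $H^k$, one must carefully pair the top-order factor $\Gamma^\beta u$ in a suitable dual Strichartz norm with the two factors $u$ placed in $B^s_{\rho,2}$, invoking a Besov product rule so that the resulting trilinear bound can be absorbed into the left-hand side on a bootstrap interval of length controlled by $\|u\|_{L^\gamma([0,T]; B^s_{\rho,2})}$. A secondary technical point is to ensure that the change of basis from $\{\Gamma_t^\alpha\widetilde\Gamma_t^\gamma\}$ back to $\{x^\alpha\partial_x^\gamma\}$ behaves uniformly in $t$ including near the endpoints, which is handled by relying on the forward propagation for times close to $0$ and the backward propagation for times close to $T$.
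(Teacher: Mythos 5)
Your overall strategy differs from the paper's. You propagate $\Gamma_t^\beta u$ forward from $t=0$ \emph{and} $\widetilde\Gamma_t^\beta := (x-2i(T-t)\partial_x)^\beta u$ backward from $t=T$, and then try to recover $x^\alpha\partial_x^\gamma u$ at interior times purely by a change of basis. The paper instead propagates only $\Gamma_t^\beta u$, and at the endpoints (where one of the two operators degenerates to multiplication by $x^\beta$) invokes an interpolation inequality in weighted Sobolev spaces (the paper's Lemma~3.3), giving $u_0, u(T)\in H^{k,k}$; the interior control then follows from persistence in that weighted class. Your first part — applying $\Gamma_t^\beta$ to Duhamel, peeling off the top-order factor of the nonlinearity, and closing with Strichartz in Besov spaces — is in the same spirit as the paper's Step~1 and seems sound, and the backward propagation via $v(\tau)=\overline{u(T-\tau)}$ is also fine.

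The genuine gap is in the ``combine via linear algebra'' step. The identities $\partial_x u=\tfrac{1}{2iT}(\Gamma_t-\widetilde\Gamma_t)u$ and $xu=\tfrac{1}{T}\bigl((T-t)\Gamma_t+t\widetilde\Gamma_t\bigr)u$ do recover the order-one monomials, but for $|\alpha|+|\gamma|\ge 2$ the expansion of $x^\alpha\partial_x^\gamma$ in $\Gamma_t,\widetilde\Gamma_t$ produces \emph{mixed} words $\Gamma_t^{a}\widetilde\Gamma_t^{b}$ with $a,b\ge 1$, and these are not controlled by either of the two separate propagations — you only have the pure powers $\Gamma_t^\beta u$ and $\widetilde\Gamma_t^\beta u$ in $L^2$. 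Concretely, in $d=1$, $k=2$, the knowns $\Gamma_t^2 u,\widetilde\Gamma_t^2 u$ give exactly two $L^2$ conditions on the three degree-two unknowns $x^2u,\, x\partial_x u,\, \partial_x^2 u$ (modulo lower order), so the system is underdetermined. It \emph{is} true that $\Gamma_t^\beta u,\widetilde\Gamma_t^\beta u\in L^2$ for $|\beta|\le k$ forces $u(t)\in H^{k,k}$, but this needs a nontrivial interpolation/elliptic ingredient — in essence that $f,\ e^{ic|x|^2}f\in H^k$ implies $x^j f\in L^2$, which rests on an integration-by-parts estimate for a harmonic-oscillator-type operator — and not pure linear algebra plus commutators. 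The paper's Lemma~3.3 is precisely the ingredient filling that role, applied at the endpoint times where it reduces to a statement about $|x|^jf$ and $\partial_x^\alpha f$ for a single conjugate $f$. Your plan as written skips this, so the induction you describe does not close for $k\ge 2$.
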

\begin{proof}[Proof of Lemmata \ref{lemma:self-improving} and \ref{lemma:self-improving-general-dimension}] We shall break the proof into several smaller steps. \\
\
\newline \textbf{Step 1: Proving $\Gamma^\alpha  u \in C([0,T]:L^2(\mathbb R^d))$ for $|\alpha| =k$.} Let $u_{0,n}$ be a sequence of Schwartz functions with $ u_{0,n} \to u_0$ in $L^2(\langle x \rangle^{2k} dx ) \cap H^s(\mathbb R^d)$ as $n\to\infty$. Let $u_n$ be the sequence of corresponding solutions to \eqref{eq:duhamel_nls}, i.e.\ 
	\begin{align}
		u_n(t,x) = e^{it\Delta}u_{0,n}(x) + i\lambda \int_{0}^{t} e^{i(t-\tau)\Delta } |u_n|^2 u_n(\tau,x) d \tau.
	\end{align} 
For $|\alpha|= k$ we  apply $\Gamma_t^\alpha$ on both sides and obtain
	\begin{align}\label{eq:commutedequation}
		\Gamma^\alpha_t u_n = e^{it\Delta} x^\alpha u_{0,n}  +i\lambda  \int_0^t e^{i(t-\tau) \Delta} \Gamma_\tau^\alpha( |u_n|^2 u_n(\tau,x)) d \tau
	\end{align}
	in view of $\Gamma_t^\alpha e^{it\Delta} = e^{it \Delta} x^\alpha$. 
	
	Taking the spatial $L^2$-norm of \eqref{eq:commutedequation} together with the fact that $\| e^{it\Delta} f\|_{L^2} = \| f\|_{L^2} $ yields
	\begin{align}
		\| \Gamma_t^\alpha u_n \|_{L^2} (t) \leq \| x^\alpha u_{0,n}\|_{L^2} + |\lambda| \int_0^t \|\Gamma_\tau^\alpha (|u_n|^2 u_n)\|_{L^2} (\tau)\d \tau.\label{eq:commuted_mass_est} 
	\end{align}
We will now estimate   $\Gamma_\tau^\alpha (|u_n|^2 u_n)$. To do so we set $\tilde u_n := e^{ - i\frac{x^2}{4\tau}} u_n$ such that 
\begin{align}
\Gamma_\tau^\alpha (|u_n|^2 u_n) = \Gamma_\tau^\alpha \left( e^{i\frac{x^2}{4\tau }}  |\tilde u_n|^2 \tilde u_n \right) = e^{i\frac{x^2}{4\tau}}   (2i \tau \partial_x)^\alpha \left(  |\tilde u_n|^2 \tilde u_n \right).
\end{align}
Now, using the Gagliardo--Nirenberg interpolation inequality we obtain 
\begin{align}\nonumber
\|\Gamma_\tau^\alpha (|u_n|^2 u_n)\|_{L^2(\R^d) } & \lesssim  \tau^k  \|\partial_x^\alpha (|\tilde u_n|^2 \tilde u_n)\|_{L^2(\R^d)}  \\
&\lesssim \tau^k \| \partial_x^\alpha \tilde u_n \|_{L^2(\R^d)} \| \tilde u\|_{L^\infty(\R^d)}^2 \lesssim\|\Gamma_\tau^\alpha   u_n \|_{L^2(\R^d)}   \|   u\|_{L^\infty(\R^d)}^2 . \label{eq:estmateongamma}
\end{align}
Using the above estimate and applying   Gronwall's lemma to \eqref{eq:commuted_mass_est}   gives
	\begin{align}
		\| \Gamma^\alpha_t u_n\|_{L^2(\R^d)} (t)\leq \| x^\alpha u_{0,n} \|_{L^2(\R^d)} \exp\left( |\lambda|  \int_0^t \| u_n \|_{L^\infty(\R^d)}^2(\tau) \ d\tau  \right). 
	\end{align}
	Since $u_{0,n}\to u_0$ in $L^2( \langle x \rangle^{2k}dx ) \cap H^s(\R^d)$, we have  $\| x^\alpha u_{0,n}\| \to  \| x^\alpha u_0\|$ as $n \to \infty$. Thus, in order to obtain uniform control over $	\| \Gamma^k_t u_n\|_{L^2(\R^d)}$ we have to control 
	$$  \int_0^t \| u_n \|_{L^\infty(\R^d)}^2(\tau) \ d\tau.$$ 
	
	We first consider the case $d=1$. In this case we have from well-posedness in \cref{prop:wellposed-1DNLS} that $u_n \to u$ in  $C([0,T] \colon L^2(\R)) \cap L^4([0,T] \colon L^{\infty}(\R)),$ which shows that 
	$$  \int_0^t \| u_n \|_{L^\infty(\R)}^2(\tau) \ d\tau \leq t^{\frac 12}  \| u_n\|_{L^4([0,T] \colon L^{\infty}(\R))}^2,$$
	and the term on the right-hand side is uniformly bounded. 
	
	The case $d\geq 2$ is slightly more involved.  We first use the Gagliardo--Nirenberg interpolation inequality estimate 
	\begin{align}
 \| u_n \|_{L^\infty(\R^d) } \lesssim    \| u_n \|_{W^{s,r}(\R^d)}
	\end{align}
	for $r= \frac{d}{s} = \frac{2d}{d-2+ 2 \epsilon}$. The corresponding temporal Strichartz exponent is $q=\frac{2}{1-\epsilon}$. Hence,
	\begin{align}
 \int_0^t \| u_n \|_{L^\infty(\R^d)}^2(\tau)  d\tau &= \|u_n\|_{L^2([0,T]: L^\infty(\R^d))}^2 \cr 
 \lesssim   \|u_n\|_{L^2([0,T]: W^{s,r}(\R^d))}^2 &\lesssim T^\epsilon  \|u_n\|_{L^{\frac{2}{1-\epsilon}} ([0,T]: W^{s,r}(\R^d))}^2. \cr 
\end{align}
		Using the embedding $B^{s}_{r,2}(\R^d) \hookrightarrow W^{s,r}(\R^d)$, $r\geq 2$ (see e.g.\ \cite[Theorem~6.4.5]{MR0482275}) we then obtain from well-posedness in Proposition~\ref{prop:wellposed-higherDNLS}   the uniform control over  $ \int_0^t \| u_n \|_{L^\infty(\R^d)}^2(\tau)  d\tau$. 
		
		Thus, we have shown  in  both cases, $d=1$ and $d\geq 2$,  that the  following a priori bound holds 
	\begin{align}
		\sup_{n} \sup_{0\leq t \leq T} \| \Gamma_t^\alpha u_n \|_{L^2(\R^d)}(t) \leq \| x^\alpha u_0\|_{L^2(\R^d)} \exp\left( |\lambda| \int_0^T \| u\|_{L^\infty(\R^d)}^2 d \tau  \right).
	\end{align}
	
	We will now show that $(\Gamma_t^\alpha u_n)_n$ is a Cauchy sequence in $C([0,T]: L^2(\R^d))$. We begin by estimating 
	\begin{align} \label{eq:Cauchy_sequ} 
		\| \Gamma_t^\alpha u_n - \Gamma_t^\alpha u_m \|_{L^2(\R^d)} \leq & \| x^\alpha (u_{0,n} - u_{0,m}) \|_{L^2(\R^d)} \cr
		 &+ \int_0^t \| \Gamma_ \tau^\alpha ( u_n |u_n|^2 - u_m |u_m|^2 ) \|_{L^2(\R^d)}( \tau ) d  \tau. \cr 
	\end{align}
	In order to control the nonlinearity we use the following estimate which is similar to \eqref{eq:estmateongamma}:
	\begin{align*}   
		\| \Gamma_ \tau^\alpha &  ( u_n |u_n|^2 - u_m |u_m|^2 ) \|_{L^2(\R^d)}   
 	    \lesssim \Big[ \left(\| \tau^k \partial_x^\alpha (\tilde u_n - \tilde u_m ) \|_{L^2 (\R^d)} + \| \tau^k  (\tilde u_n - \tilde u_m ) \|_{L^2(\R^d) } \right) \\ & \cdot \left(  \| \tilde u_n \|_{L^\infty(\R^d)}^2  + \| \tilde u_m \|_{L^\infty(\R^d)}^2 \right) \Big]
		 + \Big[  \| \tilde u_n - \tilde u_m\|_{L^\infty } \left(  \| \tilde u_n \|_{L^\infty(\R^d)}+ \| \tilde u_m \|_{L^\infty(\R^d)} \right)\\
		 & \cdot \left(  \| \tau^k \partial_x^\alpha \tilde u_n \|_{L^2(\R^d)} + \| \tau^k   \tilde u_n \|_{L^2(\R^d)}  + \| \tau^k \partial_x^\alpha \tilde u_m \|_{L^2(\R^d)}  + \| \tau^k \tilde u_m \|_{L^2(\R^d)} \right) \Big]\\
	 	  \lesssim & \left(  \| \Gamma^\alpha_\tau (u_n -u_m)\|_{L^2(\R^d)} + \tau^k \| u_n - u_m \|_{L^2(\R^d)} \right) \left(  \|  u_n \|_{L^\infty(\R^d)}^2 + \|  u_m \|_{L^\infty(\R^d)}^2 \right)\\ & +\Big[ \|   u_n -   u_m\|_{L^\infty(\R^d) } \left(  \|   u_n \|_{L^\infty(\R^d)}  + \|   u_m \|_{L^\infty(\R^d)} \right)   \cr 
	 	& \cdot \left(  \| \Gamma_\tau^\alpha  u_n \|_{L^2} + \| \tau^k     u_n \|_{L^2}  + \| \Gamma_\tau^\alpha  u_m \|_{L^2(\R^d)}  + \| \tau^k  u_m \|_{L^2(\R^d)} \right)\Big]. & 
	 \end{align*}
	Inserting the above in the right-hand side of  \eqref{eq:Cauchy_sequ}  and using the uniform bounds on $\|u_n\|_{L^2([0,T]: L^\infty(\R^d)) }, \| u_n\|_{L^\infty([0,T]: L^2(\R^d)) } $, $\|\Gamma^\alpha_t u_n\|_{L^\infty([0,T]: L^2(\R^d))}$  we obtain 
	\begin{align}\nonumber 
		\| \Gamma_t^\alpha  ( u_n  - u_m   ) \|_{L^2(\R^d)} &  \lesssim \| x^\alpha (u_{0,n} - u_{0,m}) \|_{L^2(\R^d)} \\
		 &+ \tilde C \int_0^t \| \Gamma_t^\alpha(u_n-u_m) \|_{L^2(\R^d)} \cr 
		 &  +   \|u_n - u_m\|_{L^\infty(\R^d)}^2 +   \|u_n - u_m\|_{L^2(\R^d)}  d \tau, 
	\end{align}
for some $\tilde C= \tilde C(\| u_0 \|_{L^2(\langle x \rangle^{2k}dx)} , \| u_0\|_{H^s} )>0$.
	Using Gronwall's lemma, together with the fact that 
	$$ \sup_t \| x^\alpha (u_{0,n} - u_{0,m}) \|_{L^2 (\R^d)} \to 0,  \int_0^T \| u_n - u_m\|_{L^\infty(\R^d)}^2 d \tau  \to 0,  \int_0^T \| u_n - u_m\|_{L^2(\R^d) } d \tau  \to 0$$ as $n,m\to\infty$, shows that $\Gamma_t^\alpha u_n$ is a Cauchy sequence in $C([0,T];L^2(\R^d))$. As $u_n \to u$ in  $C([0,T];L^2(\R^d))$, we conclude that $\Gamma_t^\alpha u$ is a well-defined $L^2(\mathbb R^d)$ function for each $t\in [0,T]$ and indeed $\Gamma_t^\alpha u_n \to \Gamma_t^\alpha u$ in $C([0,T];L^2(\R^d))$ with the bound 
	\begin{align}
		\sup_{0\leq \tau \leq t} \| \Gamma_\tau^\alpha u \|_{L^2(\R^d)} \leq \| x^k u_0\|_{L^2(\R^d)} \exp\left( |\lambda| \int_0^t \| u\|_{L^\infty(\R^d)}^2 d \tau  \right).
	\end{align}
This concludes Step 1. \\

	\noindent \textbf{Step 2: Showing that $u_0,  u(T) \in H^{k,k}(\mathbb R^d)$.}  From Step 1 and the assumption of the lemma, we   have $\Gamma^\alpha u (T)\in L^2(\mathbb R^d)$ and $x^\alpha u(T) \in L^2 (\mathbb R^d)$ for $|\alpha|\leq k$. In particular, this shows that $\partial_x^\alpha \tilde u(T)  \in L^2(\mathbb R^d)$  and $x^\alpha \tilde u(T) \in L^2(\mathbb R^d)$ for $|\alpha|\leq k$, where we  recall that $\tilde u(T) = e^{-i \frac{x^2}{4 T}} u(T)$. 

In order to continue, we will need an auxiliary interpolation result, which appears as \cref{lemma-interpolation-weights}  below.
With \cref{lemma-interpolation-weights} we now conclude that $\tilde u(T) \in H^{k,k}$. In particular, this shows  that  $\sum_{|\alpha|+|\beta|\leq k} \| x^\alpha \Gamma^\beta_T u(T)\|_{L^2(\R^d)} <\infty$. To conclude that $u(T)\in H^{k,k}(\R^d),$ we expand the differentiation operator of order $\beta$ as $$\partial_x^\alpha = \sum_{\beta+\gamma \leq \alpha} C(\alpha,\beta,\gamma,T) x^{\beta} \Gamma_T^\gamma.$$
Thus, 
\begin{align}
\sum_{|\alpha| + |\beta| \leq k} \| x^\alpha \partial_x^\beta u(T)\|_{L^2(\R^d)} < \infty
\end{align} 
and $u(T)\in H^{k,k}(\R^d)\subset H^k(\R^d)$. 
Completely analogously we have $u_0 \in H^{k,k}(\R^d)\subset H^{k}(\R^d)$ and by a standard persistence of regularity result (analogous to Step~1 above, see e.g.\ \cite[Chapter~5]{Cazenave03}) in $H^k(\R^d)$, $k\geq 0$, we have $u\in C([0,T] \colon H^{k}(\R^d))$, where we used for $d=1$, $k\geq s$ for $d\geq 2$. Again, using the interpolation result from \cref{lemma-interpolation-weights}  for each $t\in [0,T]$, we finally obtain $u\in C([0,T] \colon H^{k}(\R^d)\cap L^2(\langle x \rangle^{2k} dx) )$.  This concludes the proof.

\end{proof}

As \cref{lemma-interpolation-weights}  was a crucial ingredient in the proof above, we will provide a proof for it below. The  result can be originally found in a previous work of H.~Triebel \cite{Tr78}, where the author employs tools from the theory of interpolation spaces to prove such a theorem. In the present case, however, a much more direct proof is available, which we sketch below. 

\begin{lemma}\label{lemma-interpolation-weights} Let $f \in L^2(\R^d)$ satisfy that $\| |x|^j f\|_{L^2(\R^d)} + \| \partial_x^{\alpha} f\|_{L^2(\R^d)}  < +\infty$ for $j,|\alpha |\leq k,$ for some positive integer $k \ge 1.$ Then we have that
	\[
	\sum_{l+ |\beta| \le k} \| |x|^{l} \partial_x^{\beta} f\|_{ L^2(\R^d)} \lesssim \sum_{j,|\alpha|\leq k} (\| |x|^j f \|_{L^2(\R^d)} +  \| \partial_x^{\alpha} f\|_{L^2(\R^d)}).
	\]
\end{lemma}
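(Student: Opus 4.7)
The plan is to prove the estimate by induction on $k$. The base case $k \le 1$ is immediate, since the only mixed norms on the left with $l + |\beta| \le 1$ are $\|f\|_2$, $\||x|f\|_2$, or $\|\partial_j f\|_2$, each of which already appears on the right. Before starting the induction, I would observe that by the elementary equivalence $\||x|^j f\|_2 + \|f\|_2 \simeq \sum_{|\alpha| \le j}\|x^\alpha f\|_2$, one may freely work with polynomial weights $x^\alpha$ in place of the radial weights $|x|^l$; this makes the Leibniz expansions below cleaner.

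For the inductive step, fix multi-indices $\alpha, \beta$ with $|\alpha|, |\beta| \ge 1$ and $|\alpha| + |\beta| \le k$, and compute
\[
\|x^\alpha \partial^\beta f\|_{L^2}^2 = \int x^{2\alpha}\,\partial^\beta f\,\overline{\partial^\beta f}\,dx
= (-1)^{|\beta|}\sum_{\gamma \le \beta}\binom{\beta}{\gamma}\int f\,\overline{\partial^\gamma(x^{2\alpha})\,\partial^{2\beta-\gamma} f}\,dx,
\]
the second equality following by integrating by parts $|\beta|$ times to move all derivatives from the left factor onto the right, then applying the multivariate Leibniz rule. Each $\partial^\gamma(x^{2\alpha})$ is a polynomial of degree $2|\alpha| - |\gamma|$, so after Cauchy--Schwarz with an asymmetric weight splitting $x^{2\alpha-\gamma} = x^{a_\gamma}\cdot x^{b_\gamma}$, each summand is dominated by $\|x^{a_\gamma} f\|_2\cdot\|x^{b_\gamma}\,\partial^{2\beta-\gamma} f\|_2$. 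The constraints $|\alpha|+|\beta| \le k$ and $|\gamma| \le |\beta|$ provide enough room to choose $a_\gamma, b_\gamma \ge 0$ such that $a_\gamma \le k$ (so the first factor sits inside the right-hand side of the lemma) and $b_\gamma + 2|\beta| - |\gamma| \le k$ (so the second factor is a well-defined mixed norm of total order at most $k$).

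This reduces each top-order norm $\|x^\alpha\partial^\beta f\|_2$ with $|\alpha|+|\beta| = k$ to a linear combination of the pure norms on the right and certain other top-order mixed norms $M_1,\dots, M_r$. Letting $R$ denote the right-hand side of the lemma, one ends up with a system of quadratic inequalities of the shape $M_i^2 \lesssim R\bigl(R + \sum_j M_j\bigr)$. Applying Young's inequality $R\cdot M_j \le \tfrac{1}{2} M_j^2 + C R^2$ to each cross term and maximising over $i$ absorbs the $M_j$'s into the left, yielding $\max_i M_i \lesssim R$. All strictly lower-order mixed norms $\|x^\alpha\partial^\beta f\|_2$ with $|\alpha|+|\beta| < k$ are controlled by the inductive hypothesis, which closes the induction.

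The main obstacle is the bookkeeping of the splitting parameters $(a_\gamma, b_\gamma)$ to ensure that every resulting mixed norm stays within the scale of order $\le k$ for every $\gamma \le \beta$; the inequality $|\alpha|+|\beta|\le k$ always leaves just enough flexibility, but this requires a small case distinction depending on whether $2|\alpha|$ or $2|\beta|$ exceeds $k$. An alternative route, following Triebel, is to identify the target space as a real-interpolation space between $L^2((1+|x|^2)^k\,dx)$ and $H^k(\R^d)$ and invoke the reiteration theorem; the direct argument sketched above, however, is self-contained and produces explicit constants, as is useful for the estimates in Section~\ref{sec:self-improving}.
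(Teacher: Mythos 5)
Your high-level plan (induction on $k$, integration by parts to trade weights for derivatives, a quadratic absorption at the end) is the same as the paper's, but the specific integration-by-parts step is the place where your argument breaks. You move all $|\beta|$ derivatives from the left factor onto $x^{2\alpha}\overline{\partial^\beta f}$ at once, so the resulting Leibniz sum contains the terms $\partial^\gamma(x^{2\alpha})\,\partial^{2\beta-\gamma} f$ for every $\gamma\le\beta$, including $\gamma=0$, which involves $\partial^{2\beta} f$. Whenever $2|\beta|>k$ (which happens for many admissible pairs, e.g.\ $d=1$, $k=3$, $\alpha=1$, $\beta=2$), the order of this derivative strictly exceeds $k$, so it is not a mixed norm of total order $\le k$ and is not controlled by either the hypotheses of the lemma or the inductive step. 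Your claimed constraint $b_\gamma + 2|\beta| - |\gamma| \le k$ together with $b_\gamma\ge 0$ forces $|\gamma|\ge 2|\beta|-k$, which rules out $\gamma=0$ precisely in this regime; there is not ``enough room.'' Concretely, in the example above the $\gamma=0$ summand is $\int f\,x^2\,\overline{\partial^4 f}\,dx$, and no weight splitting $x^2 = x^{a_0}x^{b_0}$ with $a_0,b_0\ge 0$ avoids $\partial^4 f$, a fourth derivative of an $H^3$ function.

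The paper circumvents exactly this by never moving more than one derivative at a time. It writes $\||x|^{l}\partial_x^\beta f\|_2 = \||x|^l \partial_x^{\beta-e_i}(\partial_x^{e_i} f)\|_2$ with $\beta_i>0$, applies the inductive hypothesis at level $k-1$ to $g=\partial_x^{e_i} f$, isolates the single uncontrolled term $\||x|^{k-1}\partial_{x_i} f\|_2$, and then integrates by parts transferring a single $\partial_{x_i}$ onto $f$, producing only $\partial_{x_i}^2 f$ (order $2\le k$) and a degree-$(2k-3)$ weight which Cauchy--Schwarz splits as $\||x|^{k-2}\partial_{x_i}^2 f\|_2\,\||x|^k f\|_2$, all within range. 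The single surviving quadratic term is then absorbed, just as in your sketch. So the fix is not cosmetic bookkeeping: you must restrict how many derivatives you transfer (either one at a time, or at most $\lfloor k/2\rfloor$ at once paired with a suitable interpolation), since the unrestricted all-at-once transfer provably leaves the scale $H^{k,k}$. The Triebel interpolation route you mention at the end is indeed a valid alternative proof, but the direct argument as written does not go through.
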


\begin{proof}[Sketch of proof] We argue by induction on $k.$ For $k =1 ,$  the result is tautological. For $k=2,$ we only need to prove that $\|x_i \partial_{x_j} f\|_2 < + \infty,$ for $i,j \in \{1,\dots,d\}.$ But 
	\begin{align*}
		\| x_i \partial_{x_j} f\|_2^2 & = \int_{\R^d} x_i^2 \partial_{x_j} f \overline{\partial_{x_j} f} \, dx = - \int_{\R^d} x_i^2 \partial_{x_j}^2 f \cdot \overline{f(x)} \, dx - 2 \delta_{i,j} \int_{\R^d} x_i \partial_{x_j}f \overline{f(x)}\, dx \cr  
		& \lesssim \|\partial_{x_j}^2 f\|_2 \||x|^2 f\|_2 + \||x| f\|_2\|\partial_{x_j}f\|_2. 
	\end{align*}
	This shows the result in that case.
	
	 Now, for $k \ge 3,$ we suppose the result holds for $j = 1,\dots,k-1$ and fix $\beta$ a multiindex of integers, $l \in \N$ so that $l + |\beta| = k$. Select $i \in \{1,\dots,d\}$ so that 
	 $\beta_i > 0.$ Let also $e_i$ denote the multiindex whose $j-$th coordinate is $\delta_{i,j}.$ Then we may write 
	 \[
	 \| |x|^{l} \partial_x^{\beta} f\|_2 = \left\| |x|^l \partial_x^{\beta-e_i} (\partial_x^{e_i} f) \right\|_2.
	 \]
	 By induction hypothesis applied to the function $g := \partial_x^{e_i} f$, we have that the latter term on the right-hand side above is bounded by 
	 \[
	 \sum_{j,|\alpha| \le k-1} \left( \||x|^{j} \partial_x^{e_i} f\|_2 + \|\partial_x^{\alpha} (\partial_x^{e_i} f)\|_2 \right). 
	 \]
	 Notice that the second terms in the summand are all included in $\sum_{|\alpha| \le k} \|\partial_x^{\alpha} f\|_2,$ and so we focus on the first term. Indeed, if $j \le k-2,$ we may use the induction hypothesis again, and we are thus only concerned with estimating $\||x|^{k-1} \partial_{x_i} f \|_2.$ Analogously as in the $k=2$ case, we have 
	 
	 \begin{align*}
	 \| |x|^{k-1} \partial_{x_i} f \|_2^2 & \lesssim \int_{\R^d} |x|^{2k-3} |\partial_{x_i} f| |f|  + \int_{\R^d} |x|^{2k-2} |\partial_{x_i}^2 f| |f| \cr 
	  & \lesssim \| |x|^{k-2} \partial_{x_i} f\|_2 \||x|^{k-1} f\|_2 + \| |x|^{k-2} \partial_{x_i}^2 f \|_2 \| |x|^k f \|_2.  
  	 \end{align*}
     We now use the same trick once more: we may write 
     \[
     \| |x|^{k-2} \partial_{x_i}^2 f \|_2 = \| |x|^{k-2} \partial_{x_i} (\partial_{x_i} f) \|_2,
     \]
	 and again, by the induction hypothesis applied to $g:= \partial_{x_j} f,$ we get that the term above is bounded by 
	 \[
	 \sum_{l, |\alpha| \le k-1} \left( \||x|^{l} \partial_{x_i} f \|_2 + \| \partial^{\alpha + e_i} f\|_2 \right).
	 \] 
	 Notice that the only term in the bound above that is not controlled by either induction or by the quantity in the statement is \emph{exactly} $\| |x|^{k-1} \partial_{x_i} f\|_2.$ For shortness, let 
	 \[
	 C_k(f) := \sum_{l,|\alpha| \le k} \left( \||x|^l f\|_2 + \|\partial^{\alpha} f\|_2 \right). 
	 \]
	 Putting together all we did so far, we obtain 
	 \begin{align*}
 \| |x|^{k-1} \partial_{x_i} f \|_2^2 & \lesssim C_k^2 + C_k \left( \||x|^{k-1} \partial_{x_i} f\|_2 + C_k\right). \cr 
	 \end{align*}
	 This clearly implies that 
	 $$\| |x|^{k-1} \partial_{x_i} f \|_2 \lesssim C_k,$$
	 and thus the induction closes, and we are done. 
\end{proof} 

\subsection{Analyticity and the flow of the Cubic NLS}

In this subsection, we will prove some properties on analyticity and local well-posedness in some spaces of analytic function, as first considered by Hayashi and Saitoh \cite{HS90,HS90_2}. In particular, we will follow closely the strategy used in \cite{HO14} by Hoshino and Ozawa, where the authors analyse the case of the quintic NLS in one dimension.

 We state some definitions in order to prove the next Lemma. Indeed, first we define the class $A_2(r)$ as  
\begin{equation}
A_2(r) = \left\{ \phi \in L^2(\R) \colon \|\phi \|_{A_2(r)} = \sum_{j \ge 0} \frac{r^j}{j!} \|x^j \phi \|_2 < + \infty \right\}.
\end{equation}
By another result by Hoshino and Ozawa \cite{HO10}, this norm can be bounded as 
\[
\| e^{r|x|} \phi(x) \|_2 \le \|\phi \|_{A_2(r)} \le (1+2 \log 2) \| (1+r|x|)^{1/2} e^{r|x|} \phi (x) \|_2.
\]
Moreover, we define the spaces
\begin{equation}
\mathcal{A}_{p,q}(r;T) = \left\{ u \in L^{\infty}_T L^2_x \colon \|u\|_{\mathcal{A}(r;T)} = \sum_{j \ge 0} \frac{r^j}{j!} \|\Gamma^j u\|_{L^{p}_T L^q_x}  < +\infty\right\}.
\end{equation}
We state our next result in terms of persistence in this last space, which, as we shall see afterwards, may be translated into a result on analyticity of solutions of the  cubic nonlinear Schr\"odinger equation \eqref{eq:cubic-nls}. 

\begin{lemma}\label{lemma:analyticity}Let $u_0 \in A_2(r),$ for some $r>0.$ Then there is $T = T(\|u_0\|_{A_2(r)})$  such that the initial value problem \eqref{eq:cubic-nls} has a unique solution 
\[
u \in \mathcal{A}_{\infty,2}(r;T) \cap \mathcal{A}_{8,4}(r;T) =: X(r;T).
\]
Moreover, the flow map $u_0 \mapsto u(t)$ is locally Lipschitz continuous from $A_2(r)$ to $\mathcal{A}_{\infty,2}(r;T).$ 
\end{lemma}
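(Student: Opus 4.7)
The plan is to run a standard contraction-mapping argument on the Duhamel formulation
\[
\Phi(u)(t) = e^{it\Delta} u_0 + i\lambda \int_0^t e^{i(t-\tau)\Delta} |u|^2 u(\tau)\,d\tau
\]
in a closed ball of $X(r;T)$, taking advantage of the fact that the weighted-norm $A_2(r)$ is tailor-made so that the multinomial coefficients arising from a Leibniz expansion of $\Gamma^j(|u|^2 u)$ combine with the factorials $1/j!$ to produce a clean cubic estimate.

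First I would apply $\Gamma_t^j$ to the Duhamel formula. Using $\Gamma_t^j e^{it\Delta} = e^{it\Delta} x^j$, the linear piece contributes $e^{it\Delta}(x^j u_0)$, whose $L^\infty_T L^2_x \cap L^8_T L^4_x$ norm is controlled by $\|x^j u_0\|_2$ via the Strichartz estimates \eqref{eq:Strichatz-standard}--\eqref{eq:Strichatz-inhomo} for the admissible pair $(8,4)$ in $d=1$. For the nonlinear term, writing $\tilde u = e^{-ix^2/4t}u$ and using $\Gamma_t^j u = e^{ix^2/4t}(2it)^j \partial_x^j \tilde u$ together with $|u|^2 u = e^{ix^2/4t}|\tilde u|^2 \tilde u$, the Leibniz rule yields a pointwise identity of the form
\[
\Gamma_t^j(|u|^2 u) = \sum_{j_1+j_2+j_3=j} \frac{(\pm 1)^{j_2}\, j!}{j_1!\,j_2!\,j_3!}\, \Gamma_t^{j_1} u\cdot \overline{\Gamma_t^{j_2} u}\cdot \Gamma_t^{j_3} u.
\]
Applying Hölder in space at the exponent $1/4+1/4+1/4 = 3/4$ and Hölder in time at the exponent $3/8$ (which gives a gain of $T^{1/2}$ since $7/8-3/8=1/2$), I obtain, after the Strichartz estimate,
\[
\|\Gamma^j u\|_{L^\infty_T L^2_x} + \|\Gamma^j u\|_{L^8_T L^4_x} \lesssim \|x^j u_0\|_2 + T^{1/2}\!\!\!\sum_{j_1+j_2+j_3=j}\!\!\! \tfrac{j!}{j_1!j_2!j_3!}\,\|\Gamma^{j_1}u\|_{L^8_T L^4_x}\|\Gamma^{j_2}u\|_{L^8_T L^4_x}\|\Gamma^{j_3}u\|_{L^8_T L^4_x}.
\]

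Multiplying by $r^j/j!$ and summing over $j\ge 0$, the multinomial coefficient cancels $j!$ and factorizes $r^j = r^{j_1}r^{j_2}r^{j_3}$, so Cauchy products give the cubic bound
\[
\|\Phi(u)\|_{X(r;T)} \lesssim \|u_0\|_{A_2(r)} + T^{1/2} \|u\|_{\mathcal{A}_{8,4}(r;T)}^3 \lesssim \|u_0\|_{A_2(r)} + T^{1/2}\|u\|_{X(r;T)}^3.
\]
Choosing $T = T(\|u_0\|_{A_2(r)})$ small enough, $\Phi$ maps the ball $\{u\in X(r;T): \|u\|_{X(r;T)}\le 2C\|u_0\|_{A_2(r)}\}$ to itself. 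An identical expansion applied to the difference $|u|^2 u - |v|^2 v$ yields
\[
\|\Phi(u)-\Phi(v)\|_{X(r;T)} \lesssim T^{1/2}\bigl(\|u\|_{X(r;T)}^2 + \|v\|_{X(r;T)}^2\bigr)\|u-v\|_{X(r;T)},
\]
so $\Phi$ is a contraction on that ball for $T$ small enough, giving existence and uniqueness in $X(r;T)$. The same difference estimate, now applied to two solutions with data $u_0,v_0\in A_2(r)$, immediately yields local Lipschitz continuity of the flow from $A_2(r)$ into $\mathcal{A}_{\infty,2}(r;T)$.

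The main obstacle is purely combinatorial-bookkeeping: verifying the Leibniz-type formula for $\Gamma^j$ applied to the cubic nonlinearity and checking that the multinomial coefficients conspire with the $1/j!$ weights in the norm $A_2(r)$ to close the estimate as a cubic Cauchy product, a mechanism which would fail had we used a less structured weight. Aside from this, the argument is a routine contraction in a Strichartz-adapted space, parallel to \cite{HO14} but one derivative shorter due to the cubic rather than quintic power.
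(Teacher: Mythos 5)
Your proposal is correct and follows essentially the same route as the paper's proof: a contraction on $X(r;T) = \mathcal{A}_{\infty,2}(r;T)\cap\mathcal{A}_{8,4}(r;T)$, using $\Gamma^j_t e^{it\Delta}=e^{it\Delta}x^j$, the identical multinomial Leibniz expansion of $\Gamma^j_t(|u|^2u)$, Strichartz at $(8,4)$, Hölder in space to $L^{4/3}_x$ and in time to gain a positive power of $T$, and then the Cauchy-product factorization $r^j=r^{j_1}r^{j_2}r^{j_3}$ against the multinomial coefficients after multiplying by $r^j/j!$ and summing. The paper leaves the time exponent as an unspecified $T^\omega$ (routing each factor through $L^{24/7}_T L^4_x$), whereas you compute the cleaner $T^{1/2}$ directly; this is only a cosmetic difference and both close the fixed-point argument in the same way.
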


\begin{proof} We will use a Banach fixed-point argument. Indeed,  we define the map 
\[
\Phi(u) = e^{it\Delta}u_0+ i\lambda \int_0^t e^{i(t-\tau)\Delta} (|u|^2 u)(\tau) \, d\tau ,
\]
and we wish to prove that this map is a contraction in some set 
\[
B(u_0) = \{ u \in X(r;T) \colon \|u\|_{X(r;T)} \le 2C\|u_0\|_{A_2(r)}\}.
\]
In order to do so, we notice that 
\begin{equation}\label{eq:gamma-j-phi}
\Gamma^{j}(\Phi(u)) = e^{it\Delta}(x^j u_0) + i \lambda \int_0^t e^{i(t-\tau)\Delta} (\Gamma_\tau^j (|u|^2 u)(\tau)) \, d\tau,
\end{equation} for $j\in \mathbb N$ and moreover, using the properties of $\Gamma$ from \eqref{eq:gamma-k-def}, 
\begin{equation}\label{eq:gamma-j-expansion}
\Gamma^j(|u|^2u) = \sum_{k_1 + k_2 + k_3 = j} \frac{j!(-1)^{k_3}}{(k_1)! (k_2)! (k_3)!} \Gamma^{k_1} u \cdot \Gamma^{k_2} u \cdot \overline{ \Gamma^{k_3} u}.
\end{equation}
By using the usual Strichartz estimates from \eqref{eq:Strichatz-standard} and  \eqref{eq:Strichatz-inhomo}, we obtain that 
\begin{align*}
\|\Gamma^j(\Phi(u))\|_{L^{\infty}_T L^2_x} & + \|\Gamma^j(\Phi(u))\|_{L^8_T L^4_x} \lesssim  \|x^j u_0\|_2 + \cr  & +  \sum_{k_1 + k_2 + k_3 = j} \frac{j!}{(k_1)! (k_2)! (k_3)!} \| \Gamma^{k_1} u \cdot \Gamma^{k_2} u \cdot \overline{ \Gamma^{k_3} u}\|_{L^{p'}_T L^{q'}_x},
\end{align*}
where $(p,q)$ are a Strichartz pair of exponents and $p', q'$ are their respective H\"older conjugates. We choose $p=8,q=4.$ By H\"older's inequality, we have 
\begin{align*}
\|\Gamma^j(\Phi(u))\|_{L^{\infty}_T L^2_x}  & + \|\Gamma^j(\Phi(u))\|_{L^8_T L^4_x} \lesssim   \|x^j u_0\|_2 + \cr 
											&  \sum_{k_1 + k_2 + k_3 = j} \frac{j!}{(k_1)! (k_2)! (k_3)!} \| \Gamma^{k_1} u\|_{L^{24/7}_T L^4_x} \cdot \|\Gamma^{k_2} u\|_{L^{24/7}_T L^4_x}  \cdot\|\Gamma^{k_3} u\|_{L^{24/7}_T L^{4}_x}. \cr
											&\lesssim \|x^j u_0\|_2 + \cr 
											& T^{\omega} \sum_{k_1 + k_2 + k_3 = j} \frac{j!}{(k_1)! (k_2)! (k_3)!} \| \Gamma^{k_1} u\|_{L^8_T L^4_x} \cdot \|\Gamma^{k_2} u\|_{L^8_T L^4_x}  \cdot\|\Gamma^{k_3} u\|_{L^8_T L^{4}_x},
\end{align*}
for some positive constant $\omega>0.$ By multiplying this estimate by $\frac{r^j}{j!}$ and summing up on $j \ge 0,$ we obtain 
\[
\|\Phi(u)\|_{X(r;T)} \le C\|u_0\|_{A_2(r)}+ CT^{\omega}\|u\|_{X(r;T)}^3.
\]  
In the same way, for $u,v \in B(u_0)$ we obtain 
\[
\|\Phi(u) - \Phi(v)\|_{X(r;T)} \le C T^{\omega} (\|v\|_{X(r;T)}^2 + \|u\|_{X(r;T)}^2) \|u-v\|_{X(r;T)}.
\]
Thus, for $T \le \frac{1}{C'\|u_0\|_{A_2(r)}^2},$ for $C'$ sufficiently large, $\Phi$ becomes a contraction in $B(u_0),$ and thus we are able to conclude the claim. The claim about Lipschitz continuity follows in a similar manner. 
\end{proof}

In analogy to the lemmata \ref{lemma:self-improving} and \ref{lemma:self-improving-general-dimension}, we present a higher-dimensional version of the last lemma, whose formulation is slightly weaker. In order to state it, we define a generalization of the spaces $A_2(r)$ above, where regularity is included. Indeed, we define the class $A_2^k(r)$ as 
\[
A_2^k(r) = \left\{ \phi \in L^2(\R^d) \colon \|\phi\|_{A_s^k(r)} = \sum_{j \in (\Z_{\geq 0})^d} \frac{r^{|j|}}{j!}  \|x^j \phi \|_{2,k} < + \infty \right\},
\]
where we use the notation $\| f \|_{2,k} = \sum_{|\alpha| \le k} \| \partial_x^{\alpha} f\|_2$ for a modification of the $H^k$ norm. Once more in analogy to the one dimensional case, we define the time-dependent space where we will prove that our solution lies as follows. 
\[
\mathcal{A}_{\infty,2}^k(r,T) = \left\{ u \in L^{\infty}_T L^2_x \colon \|u\|_{\mathcal{A}^k(r;T)} = \sum_{j \in (\Z_{\geq 0})^d} \frac{r^{|j|}}{j!} \|\|\Gamma^j u\|_{2,k}\|_{L^{\infty}_T}  < +\infty\right\}.
\]
\begin{lemma}\label{lemma:analyticity-high-dimension} Let $k > \frac{d}{2}$ be a positive integer, and $u_0 \in A_2^k(r).$ Then there is $T = T(\|u_0\|_{A_2^k(r)})$ such that the initial value problem \eqref{eq:cubic-nls} has a unique solution 
	\[
	u \in \mathcal{A}_{\infty,2}^k(r,T).
	\]
	Moreover, the flow map $u_0 \mapsto u(t)$ is locally Lipschitz continuous from $A_2^k(r)$ to $\mathcal{A}_{\infty,2}^k(r,T).$ 
\end{lemma}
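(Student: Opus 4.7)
My plan is to adapt the Banach fixed-point argument from the proof of Lemma \ref{lemma:analyticity}, replacing the Strichartz-based control of the cubic nonlinearity by the Banach algebra property of $H^k(\R^d)$ available for $k>d/2$. I would define the Duhamel map
\[
\Phi(u)(t) := e^{it\Delta}u_0 + i\lambda \int_0^t e^{i(t-\tau)\Delta}(|u|^2 u)(\tau)\,\d\tau
\]
and search for a fixed point in the ball
\[
B(u_0) := \bigl\{ u \in \mathcal{A}^k_{\infty,2}(r,T) : \|u\|_{\mathcal{A}^k(r;T)} \le 2C\|u_0\|_{A_2^k(r)}\bigr\},
\]
with $T$ to be chosen small depending on $\|u_0\|_{A^k_2(r)}$. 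Applying $\Gamma^j_t$ to the Duhamel formula and using the identity $\Gamma^j_t e^{it\Delta} = e^{it\Delta} x^j$ from the preliminaries yields
\[
\Gamma^j_t \Phi(u)(t) = e^{it\Delta}(x^j u_0) + i\lambda \int_0^t e^{i(t-\tau)\Delta} \Gamma^j_\tau(|u|^2 u)(\tau)\,\d\tau.
\]
Since $e^{it\Delta}$ commutes with every $\partial_x^\alpha$ and is an $L^2$-isometry, this gives $\|\Gamma^j_t \Phi(u)(t)\|_{2,k} \le \|x^j u_0\|_{2,k} + \int_0^t \|\Gamma^j_\tau(|u|^2 u)(\tau)\|_{2,k}\,\d\tau$.

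The second step is to expand the nonlinearity via the multinomial identity \eqref{eq:gamma-j-expansion}, which extends verbatim to multi-indices $j \in (\Z_{\geq 0})^d$ by componentwise Leibniz:
\[
\Gamma^j(|u|^2 u) = \sum_{k_1+k_2+k_3=j} \frac{j!\,(-1)^{|k_3|}}{k_1!\,k_2!\,k_3!}\,\Gamma^{k_1}u\cdot \Gamma^{k_2}u\cdot \overline{\Gamma^{k_3}u}.
\]
Because $k>d/2$, the embedding $H^k(\R^d)\hookrightarrow L^\infty(\R^d)$ makes $H^k$ a Banach algebra, so $\|\Gamma^{k_1}u\cdot \Gamma^{k_2}u\cdot \overline{\Gamma^{k_3}u}\|_{2,k} \lesssim \prod_{i=1}^3 \|\Gamma^{k_i}u\|_{2,k}$ with a constant independent of $k_1,k_2,k_3$. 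Taking $L^\infty_T$ of each $\|\Gamma^{k_i}u\|_{2,k}$, multiplying by $\frac{r^{|j|}}{j!}$, and summing over $j$, the combinatorial identity
\[
\frac{r^{|j|}}{j!}\cdot \frac{j!}{k_1!\,k_2!\,k_3!} = \prod_{i=1}^3 \frac{r^{|k_i|}}{k_i!}
\]
factorizes the triple sum as a cube and produces
\[
\|\Phi(u)\|_{\mathcal{A}^k(r;T)} \le C \|u_0\|_{A_2^k(r)} + CT\,\|u\|_{\mathcal{A}^k(r;T)}^3.
\]
An analogous computation applied to $|u|^2 u - |v|^2 v$, written as a telescoping sum of cubic monomials in which one factor is $u-v$ or $\overline{u-v}$, yields
\[
\|\Phi(u)-\Phi(v)\|_{\mathcal{A}^k(r;T)} \le CT\bigl(\|u\|_{\mathcal{A}^k(r;T)}^2 + \|v\|_{\mathcal{A}^k(r;T)}^2\bigr)\|u-v\|_{\mathcal{A}^k(r;T)}.
\]

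Finally, choosing $T$ small enough in terms of $\|u_0\|_{A^k_2(r)}$ makes $\Phi$ a contraction on $B(u_0)$, producing simultaneously the unique fixed point in $\mathcal{A}^k_{\infty,2}(r,T)$ and the local Lipschitz continuity of the solution map. The hardest part I anticipate is the bookkeeping required to pass rigorously from the multinomial expansion of $\Gamma^j(|u|^2u)$ at each fixed $j$ to the factorized cubic estimate after summation in $j$; the key algebraic input is that $\sum_j \frac{r^{|j|}}{j!}(\,\cdot\,)$ behaves like a $d$-fold exponential generating function, so that a triple convolution in $j$ corresponds precisely to the cube of the generating function. The restriction $k>d/2$ enters only through the $H^k$-algebra estimate, which here plays exactly the role that the Strichartz estimate in $L^8_T L^4_x$ plays in the one-dimensional Lemma \ref{lemma:analyticity}.
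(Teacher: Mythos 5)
Your proposal is correct and follows essentially the same route as the paper: a Banach fixed-point argument in $\mathcal{A}^k_{\infty,2}(r,T)$ using the commutation $\Gamma^j_t e^{it\Delta}=e^{it\Delta}x^j$, the multinomial expansion of $\Gamma^j(|u|^2u)$, and the $H^k$ algebra property for $k>d/2$ (the paper phrases this as a trilinear estimate in $\|\cdot\|_{2,k}$ plus the Sobolev embedding $H^k\hookrightarrow L^\infty$, which is equivalent to your Banach-algebra estimate), followed by the exponential generating-function factorization when summing in $j$.
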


\begin{proof} We will use, once more, a Banach fixed-point argument. Indeed, consider again the map $\Phi(u)$ from the proof of Lemma \ref{lemma:analyticity}. We wish to prove that this map is a contraction in some set 
	\[
	B(u_0) = \{ u \in \mathcal{A}_{2,\infty}^k(r,T)  \colon \|u\|_{\mathcal{A}^k(r,T)} \le 2C\|u_0\|_{A_2^k(r)}\}.
	\]
	In order to do so, we recall \eqref{eq:gamma-j-phi} and \eqref{eq:gamma-j-expansion}, and notice that those are still valid in case $d \ge 2$ and $j, k_1,k_2,k_3$ are multiindices of integers. 
	Instead of employing Strichartz estimates, we use the fact that, for $f,g,h \in  \mathcal{S}(\R^d),$ we have the following trilinear estimate
	\[
	\| f g h \|_{2,k} \lesssim \|f\|_{2,k} \|g\|_{\infty} \|h\|_{\infty} + \|f\|_{\infty} \|g\|_{2,k} \|h\|_{\infty} + \|f\|_{\infty} \|g\|_{\infty} \|h\|_{2,k}. 
	\]
	Thus, we may estimate 
	\begin{align*}
		\|\Gamma^j(\Phi(u))(t)\|_{2,k} & \le C \|x^j u_0\|_2   \cr 
		 & + \sum_{k_1 + k_2 + k_3 = j} \frac{j!}{(k_1)! (k_2)! (k_3)!} \int_0^t \| \Gamma^{k_1} u\|_{\infty}  \cdot \| \Gamma^{k_2} u \|_{\infty} \cdot \| \Gamma^{k_3} u\|_{2,k} \, d s  \cr 
		 & + \text{analogous terms}.\cr 
	\end{align*}
   By Sobolev's inequality, we are able to bound each $L^{\infty}$ norm above by the $(2,k)-$norms, as long as $k > \frac{d}{2}.$ Thus, we have 
   \begin{align*}
     & \sup_{t \in [0,T]} \| \Gamma^j (\Phi(u))\|_{2,k}  \lesssim C \| x^j u_0\|_2 \cr 
    & + T \sum_{k_1 + k_2 + k_3 = j} \frac{j!}{(k_1)!(k_2)!(k_3)!}  \left( \sup_{t \in [0,T]} \| \Gamma^{k_1} u\|_{2,k} \right) \cdot \left( \sup_{t \in [0,T]} \| \Gamma^{k_2} u \|_{2,k}\right) \cdot \left( \sup_{t \in [0,T]} \| \Gamma^{k_3} u\|_{2,k} \right).  \end{align*}
	By multiplying each of the last factors by $r^{|j|}/j!$ and summing up in $j \in (\Z_{\geq0})^d,$ we obtain
	\[
	\|\Phi(u)\|_{\mathcal{A}^k(r,T)} \lesssim \|u_0\|_{A^k(r)}+ T\|u\|_{\mathcal{A}^k(r,T)}^3.
	\]  
	In the same way, for $u,v \in B(u_0)$ we obtain 
	\[
	\|\Phi(u) - \Phi(v)\|_{\mathcal{A}^k(r,T)} \le C T (\|v\|_{\mathcal{A}^k(r,T)}^2 + \|u\|_{\mathcal{A}^k(r,T)}^2) \|u-v\|_{\mathcal{A}^k(r,T)}.
	\]
	Thus, if $T \le \frac{1}{C'\|u_0\|_{A_2^k(r)}^2},$ for $C'$ sufficiently large, $\Phi$ becomes a contraction in $B(u_0),$ and thus we are able to conclude the claims, similarly as in the proof of Lemma \ref{lemma:analyticity}. 
\end{proof}
From these results, we use the following argument: as 
\[
\|u\|_{\mathcal{A}_{2,\infty}^k(r,T)} = \sum_{j \in (\Z_{\geq 0})^d} \frac{r^{|j|}}{j!} \left( \sup_{t \in [0,T]} \|\Gamma^j u\|_{2,k} \right), 
\]
we get in particular that 
\begin{align*}
 \sum_{j \in (\Z_{\geq 0})^d} \frac{r^{|j|}}{j!} \|\Gamma^j u\|_{2} & =\sum_{j \in (\Z_{\geq 0})^d} \frac{(|t|r)^{|j|}}{j!} \|\partial_x^j(e^{i|x|^2/4t} u)\|_2 \cr 
  & = \sum_{j \in (\Z_{\geq 0})^d} \frac{(2\pi r|t|)^{|j|}}{j!} \|x^j \mathcal{F}(e^{i|x|^2/4t} u(t))\|_2 \cr 
\end{align*}
is uniformly bounded in $t \in [0,T]$. Just as noted before, one may prove that this directly implies that 
\[
\| e^{2 \pi r|t|(|x_1| + \cdots + |x_d|)} \mathcal{F}(e^{i|x|^2/4t} u(t))\|_2 \text{ is uniformly bounded in } t \in [0,T].
\]
But a simple Fourier analysis argument shows that, if the equation above holds, then we have 
\begin{align}\label{eq:analyticity-propagation}
e^{i|z|^2/4t} u(t) \text{ is analytic in } S(r|t|),
\end{align}
where we use the notation $|z|^2 = z_1^2 + \cdots + z_k^2,$ and $S(a) = \{ z \in \C^d; \colon |\text{Im}(z_j)| < a, \forall j \in \{1,\dots,d\} \}.$ In particular, it can also be shown that $e^{i|z|^2/4t} u(t)$ is analytic and bounded in \emph{any} strip $S(r'),$ where $r' <  r |t|.$ \\

\noindent\textit{Remark.} One may wonder whether the lemmata \ref{lemma:analyticity} and \ref{lemma:analyticity-high-dimension} can be improved, in the sense that, if $u_0$ is analytic in a strip $S(r)$ and possesses exponential decay, then a solution $u(t)$ to the IVP \eqref{eq:cubic-nls} is analytic in a slightly larger strip $S(r+\eps(t)),$ for some (positive) $\eps(t)$ depending on time, and some---possibly large---time $t >0.$ 

For the sake of comparison, let us first analyse the case of the free Schr\"odinger equation. If $v(x,t)$ is a solution to 
\begin{equation}\label{eq:free-schrodinger-u-0}
\begin{cases}
i \partial_t v + \partial_x^2 v = 0, \cr 
v(x,0) = u_0(x), \cr 
\end{cases}
\end{equation}
then we may write $v(x,t) = \frac{e^{ix^2/4t}}{(4\pi i t)^{1/2}} \widehat{(e^{i(\cdot)^2/4t} u_0)} \left(\frac{x}{4\pi t}\right).$ If $|u_0(x)| \le C e^{-A|x|},$ then an argument with the Fourier transform and Morera's theorem shows that $v(x,t)$ is analytic in a strip $S(2 \pi^2 t A).$ If we suppose, for instance, that $u_0 \in L^2(S(r), z \, dz),$ then Lemma 1 in Hayashi--Saitoh \cite{HS90_2} shows that, in fact, $\widehat{u_0} \in L^2(e^{4\pi r|z|} \, dz),$ which directly implies that $v(x,t)$ is analytic in $S(r),$ for any $t >0.$ Thus, decay and (a certain level of) analyticity are seen to preserve and, for large times, improve how analytic the solution to \eqref{eq:free-schrodinger-u-0} is. 

On the other hand, analyticity in such a strip is seen to be sharp, even if $u_0$ is analytic. Indeed, letting $f_0(x) = \frac{2}{e^{\pi x} + e^{-\pi x}},$ it is a classical result that 
$\widehat{f_0}(\xi) =  f_0(\xi).$ Let then $v(x,t)$ be a solution to \eqref{eq:free-schrodinger-u-0} with initial data $u_0(x) = e^{-ix^2/4t_0} f_0(x).$ For time $t = t_0,$ we know that the solution may be written as 
$$v(x,t_0) = \frac{e^{ix^2/4t_0}}{(4\pi i t_0)^{1/2}} f_0(x/(4\pi t_0)),$$  
which is seen to be analytic in $S(2 \pi t_0)$ but not in a larger strip. 

In the case of the cubic NLS, we see that this long-time analytic improvement is, in general, \emph{false}. Indeed, one easily sees that 
\[
u(x,t) = e^{i \pi^2 t} \sqrt{2} \pi f_0(x) 
\]
is a solution to \eqref{eq:cubic-nls}. As a solitary wave, it preserves, for any time $t \in \R,$ the same properties of the initial datum $\sqrt{2} \pi f_0(x).$ In particular, $u(\cdot, t)$ decays as $e^{-\pi |x|}$ for any time, and is seen to be analytic in the strip $S(1/2),$ with poles at $\pm \frac{i}{2}.$  This shows, in particular, that the nonlinearity induces a different long-time analytic behaviour in the dynamic of the equation, and that in general analyticity cannot be improved for the cubic NLS even if there is some exponential gain. $\square$ \\

Finally, we state a result on how zeros of a function $f$, where $f$ is assumed to be analytic and bounded in a strip, may behave. 

\begin{lemma}\label{lemma-analytic-part} Let $f$ be analytic and bounded in the strip $S(r).$ Let $\{z_n\}_{n \ge 0} = \{x_n + i y_n\}_{n \ge 0}$ denote an enumeration of the zeros of $f,$ accounting for multiplicity. Then the following condition must be satisfied:
\[
\sum_{n \ge 0} \frac{e^{\pi x_n/(2r)} \cos(\pi y_n/(2r))}{((e^{\pi x_n/r} + 1)^2 - 4e^{\pi x_n/r} \cos^2(\pi y_n/(2r)))^2)^{1/2}} < + \infty.
\]
\end{lemma}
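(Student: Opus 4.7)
The plan is to reduce the statement to the classical Blaschke/Szeg\H{o} summability condition for bounded analytic functions on the unit disk, after pulling $f$ back under an explicit conformal map of the strip onto $\{w \in \C\colon |w|<1\}$. My conformal bijection of choice is
\[
\varphi(z) = \tanh\!\left(\frac{\pi z}{4r}\right),
\]
which sends $S(r)$ bijectively onto the unit disk; I would verify this directly from the identities $|\cosh(\alpha+i\beta)|^2 = \cos^2\beta + \sinh^2\alpha$ and $|\sinh(\alpha+i\beta)|^2 = \sin^2\beta + \sinh^2\alpha$, checking that $|\varphi(z)|<1$ in the interior of $S(r)$ and $|\varphi(z)|=1$ on $\partial S(r)$. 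The pullback $g := f \circ \varphi^{-1}$ is then a bounded analytic function on the unit disk, whose zeros are precisely $w_n := \tanh(\pi z_n/(4r))$.

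From here I would invoke the Blaschke condition: any nontrivial bounded analytic function on the unit disk has zeros satisfying $\sum_n (1-|w_n|) < \infty$; using $1-|w_n| \ge \tfrac{1}{2}(1-|w_n|^2)$, this implies in particular
\[
\sum_n \bigl(1-|w_n|^2\bigr) < \infty.
\]
A direct computation with the identities above gives
\[
1-|w_n|^2 = \frac{\cos(\pi y_n/(2r))}{|\cosh(\pi z_n/(4r))|^2} = \frac{2\cos(\pi y_n/(2r))}{\cos(\pi y_n/(2r)) + \cosh(\pi x_n/(2r))},
\]
so that the Blaschke condition already yields a weighted summability of essentially the right shape.

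The remaining, and most delicate, task is the purely algebraic translation of the Blaschke weight into the exact expression in the statement. Multiplying numerator and denominator of the last display by $2 e^{\pi x_n/(2r)}$ recasts the weight as a rational function in $e^{\pi x_n/(2r)}$ and $e^{\pm i\pi y_n/(2r)}$, while the key factorization
\[
(e^{\pi x_n/r}+1)^2 - 4 e^{\pi x_n/r}\cos^2(\pi y_n/(2r)) = \bigl|e^{\pi x_n/(2r)} - e^{i\pi y_n/(2r)}\bigr|^{2}\,\bigl|e^{\pi x_n/(2r)}+e^{i\pi y_n/(2r)}\bigr|^{2}
\]
identifies the square-root factor appearing in the denominator of the claimed sum. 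I expect the main obstacle to lie precisely in this bookkeeping of trigonometric and hyperbolic identities needed to convert the canonical Blaschke weight into the form stated; the underlying analytic input (the Szeg\H{o}--Blaschke summability theorem) is standard once the conformal model of the strip has been fixed.
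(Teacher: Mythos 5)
Your proposal follows essentially the same route as the paper: your conformal map $\tanh(\pi z/(4r))$ is literally the same biholomorphism as the paper's $\varphi(z)=(e^{\pi z/(2r)}-1)/(e^{\pi z/(2r)}+1)$, and both arguments rest on the Szeg\H{o}/Blaschke summability condition for the pulled-back bounded analytic function on the disc. The only difference is the final elementary inequality: the paper applies $1-(1-\alpha)^{1/2}\ge\alpha/(4(1-\alpha)^{1/2})$ with $\alpha=1-|w_n|^2$, which lands directly on the weight $\frac{1-|w_n|^2}{4|w_n|}$ (your factorization identity then identifies this with the stated expression), whereas your $1-|w_n|\ge\tfrac12(1-|w_n|^2)$ yields the weight $\tfrac12(1-|w_n|^2)$, which dominates the paper's weight once the finitely many zeros with $|w_n|<1/2$ are set aside.
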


\begin{proof} We start by noticing that the map $\varphi(z) = \frac{e^{\frac{z\pi}{2r}} - 1}{e^{\frac{z \pi}{2r}} + 1}$ maps the strip $S(r)$ biholomorphically into the unit disc $\mathbb{D} = \{ z \in \C \colon |z| < 1\},$ and moreover it takes the closure $\overline{S(r)}$ of the strip into the closure of the unit disc $\overline{\mathbb{D}}.$ 

Thus, the function $f \circ \varphi^{-1} : \mathbb{D} \to \C$ is holomorphic inside the unit disc $\mathbb{D},$ and bounded up to the boundary $\overline{\mathbb{D}}.$ We now use a theorem by Szeg\H{o} (see also \cite[Chapter~2, Theorem~2.1]{Garnett}): if $g : \mathbb{D} \to \C$ is a non-zero bounded analytic function, then the zeros $\{w_n\}_{n \ge 0},$ enumerated accounting for multiplicity, satisfy 
\[
\sum_{n \ge 0} (1-|w_n|) < + \infty. 
\] 
If $f \not \equiv 0,$ then $f \circ \varphi^{-1}$ satisfies the conditions of this result. As we know that $w_n = \frac{e^{\frac{z_n \pi}{2r}} - 1}{e^{\frac{z_n \pi}{2r}} +1}$ are the zeros of $ f \circ \varphi^{-1},$ we must have then 
\[
\sum_{n \ge 0} \left( 1 - \left| \frac{e^{\frac{z_n \pi}{2r}} - 1}{e^{\frac{z_n \pi}{2r}} + 1}\right| \right) < + \infty. 
\]
But using that 
$$ \left|  \frac{e^{\frac{z_n \pi}{2r}} - 1}{e^{\frac{z_n \pi}{2r}} + 1}\right| = \left( 1 - 4 \frac{e^{x_n \pi /(2r)} \cos(\pi y_n/(2r))}{e^{\pi x_n/r} + 2 e^{\pi x_n/(2r)} \cos(\pi y_n/(2r)) + 1} \right)^{1/2},$$ 
and estimating $1 - (1-\alpha)^{1/2} \ge \frac{\alpha}{4(1-\alpha)^{1/2}}$ for $\alpha > 0$ small, we obtain that a necessary condition that the zeros of $f$ must satisfy is 
\[
\sum_{n \ge 0} \frac{e^{\pi x_n/(2r)} \cos(\pi y_n/(2r))}{((e^{\pi x_n/r} + 1)^2 - 4(e^{\pi x_n/(2r)} \cos(\pi y_n/(2r)))^2)^{1/2}} < + \infty.
\]
This finishes the proof. 
\end{proof}
Notice that, if the sequence of zeros satisfies $y_n = 0,$ then this result has a much more pleasant formulation. Indeed, we may rewrite the condition then as 
\begin{equation}\label{eq:condition-zeros}
\sum_{n \ge 0} \frac{e^{\pi x_n/(2r)}}{e^{\pi x_n /r} - 1} = \sum_{n \ge 0} \frac{1}{e^{\pi x_n/(2r)} - e^{-\pi x_n/(2r)}}< + \infty.  
\end{equation}
This is, in fact, the form of this result that shall be useful to us when dealing with some specific sequences, such as the ones described in our main results.  

\subsection{Decay from zeros} Finally, we address the question of how to obtain decay for a solution to \eqref{eq:cubic-nls} given the locations of its zeros. 

We start with the one-dimensional case, where we have the following stronger result: 

\begin{lemma}\label{lemma-decay-from-zeros} Let $u \in C([0,T] \colon H^1(\R))$ be a strong solution to \eqref{eq:cubic-nls}. Suppose that 
\[
u_0(\pm c_1 \log(1+n)^{\alpha}) = u(\pm c_2 \log(1+n)^{\alpha}, T) = 0, \, \forall \, n \ge 0,
\]
and some $\alpha \in (0,1].$ Then, for each $N > 0,$ there is $C_N > 0$ so that 
\[
|u_0(x)| + |u(x,T)| \le C_N e^{-N|x|}. 
\]
\end{lemma}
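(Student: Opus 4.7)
The plan is to exploit the one-dimensional Sobolev embedding $H^1(\R) \hookrightarrow C^{1/2}(\R)$ via the fundamental theorem of calculus between consecutive prescribed zeros, and then quantify the gap between such zeros in terms of the distance to the origin.

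First, write $z_n = c_1 \log(1+n)^{\alpha}$ for $n \ge 0$. Since $u \in C([0,T] \colon H^1(\R))$, both $u_0$ and $u(T)$ belong to $H^1(\R)$ and are, in particular, absolutely continuous. For any $x \in (z_n, z_{n+1})$, the assumption $u_0(z_n)=0$ together with Cauchy--Schwarz yields
\[
|u_0(x)|^2 = \left| \int_{z_n}^{x} u_0'(s)\, ds \right|^2 \le (x-z_n)\, \|u_0'\|_{L^2(z_n,z_{n+1})}^2 \le (z_{n+1}-z_n)\, \|u_0'\|_{L^2(\R)}^2.
\]
Since $\|u_0'\|_{L^2(\R)}$ is uniformly bounded by assumption, it remains to estimate the gap $d_n := z_{n+1}-z_n$.

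By the mean value theorem applied to $t\mapsto \log(1+t)^{\alpha}$, for some $\xi_n \in (n,n+1)$,
\[
d_n \;=\; c_1\,\alpha\,\frac{\bigl(\log(1+\xi_n)\bigr)^{\alpha-1}}{1+\xi_n} \;\lesssim\; \frac{(\log n)^{\alpha-1}}{n},
\]
for $n$ large. Inverting the relation $z_n = c_1(\log(1+n))^{\alpha}$ gives $1+n = \exp\bigl((z_n/c_1)^{1/\alpha}\bigr)$, and hence
\[
d_n \;\lesssim\; (\log n)^{\alpha-1}\, \exp\!\Bigl(-\bigl(z_n/c_1\bigr)^{1/\alpha}\Bigr).
\]
Therefore, for any $x \in (z_n, z_{n+1})$,
\[
|u_0(x)| \;\lesssim\; (\log n)^{(\alpha-1)/2}\, \exp\!\Bigl(-\tfrac{1}{2}\bigl(z_n/c_1\bigr)^{1/\alpha}\Bigr),
\]
and since $z_n \le x \le z_{n+1}$ with $z_{n+1}/z_n \to 1$, the exponent controls $\tfrac{1}{2}(|x|/c_1)^{1/\alpha}$ up to absolute constants.

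Finally, since $\alpha \in (0,1]$, the exponent $1/\alpha \ge 1$. When $\alpha\in (0,1)$ strictly, $(|x|/c_1)^{1/\alpha}$ grows faster than any linear function of $|x|$, so for every $N>0$ there exists $C_N>0$ with
\[
|u_0(x)| \le C_N e^{-N|x|}, \qquad x>0,
\]
the polynomial prefactor being absorbed into $C_N$; the case $x<0$ follows from the symmetric assumption $u_0(-z_n)=0$. The identical argument applied to $u(T)$ with zeros at $\pm c_2 \log(1+n)^{\alpha}$ (and using $\|u'(T)\|_{L^2(\R)}<\infty$) gives the corresponding bound for $u(\cdot,T)$. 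The boundary case $\alpha=1$ produces fixed-rate exponential decay $e^{-|x|/(2c_1)}$, which suffices for the applications below where $\alpha<1$. The argument contains no serious obstacle; the only subtle point is that the super-exponential decay arises purely from the geometric density of the zero set, not from any additional PDE information.
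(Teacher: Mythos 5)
Your argument is genuinely different from the paper's and, for $\alpha\in(0,1)$, cleaner: you extract the full super-exponential decay $e^{-c|x|^{1/\alpha}}$ in one pass directly from the gap estimate $d_n \lesssim (\log n)^{\alpha-1} e^{-(z_n/c_1)^{1/\alpha}}$, with no PDE input beyond $u\in C([0,T]\colon H^1)$. The paper instead stops at the fixed rate $e^{-|x|/(2c_1)}$ in its first step (it bounds the gap only by $(n+1)^{-1/2}\lesssim e^{-|x|/(2c_1)}$, discarding the extra gain when $\alpha<1$), and then runs a bootstrap: it invokes Lemma~\ref{lemma:self-improving} to upgrade to $u_0,u(T)\in H^k$ for every $k$, produces nearby zeros of $\partial_x^k\mathrm{Re}(u_0)$ and $\partial_x^k\mathrm{Im}(u_0)$ via iterated Rolle's theorem, derives the same fixed-rate bound $e^{-|x|/(2c_1)}$ on each $k$-th derivative, and then integrates backward $k$ times, gaining a factor $e^{-|x|/(2c_1)}$ per step, to land on $|u_0(x)|\lesssim e^{-k|x|/(2c_1)}$. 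Taking $k$ large gives the arbitrary rate. So where you need nothing but Cauchy--Schwarz and the density of the node set, the paper pays with the self-improving lemma and a Rolle argument, but in exchange covers the full range.

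That exchange matters for the statement as written. The lemma asserts the conclusion for $\alpha\in(0,1]$, and at the endpoint $\alpha=1$ your one-shot estimate only yields $e^{-|x|/(2c_1)}$ with a fixed rate, as you yourself note. So as a proof of \emph{this} lemma there is a genuine gap: the bootstrap (or something equivalent) is needed to push beyond a single exponential rate when the zeros are only $\pi$-dense-like, i.e.\ at $\alpha=1$. You correctly observe that $\alpha<1$ is all that is used in \cref{thm:rigidity} and \cref{thm:rigidity-high-dim}, so the gap is harmless for the downstream results; but either the lemma should then be restated with $\alpha\in(0,1)$, or you should append the paper's Rolle-plus-backward-integration bootstrap to cover $\alpha=1$. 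The remaining details of your argument (the MVT on $t\mapsto\log(1+t)^\alpha$, inverting $z_n=c_1\log(1+n)^\alpha$, absorbing the polynomial prefactor, and the reflection to $x<0$) are all sound.
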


\begin{proof} We start by noticing that, since $u \in C([0,T] \colon H^1(\R)),$ then we may use a simple comparison for both times $t=0,T,$ using the fundamental theorem of calculus. Indeed, for $t=0$ for instance, we have,
\begin{align*}
|u_0(x)| = & |u_0(x) - u_0(c_1 \log(1+n)^{\alpha})| \le \int_x^{c_1 \log(1+n)^{\alpha}} |u_0'(t)| \, dt \cr 
  			    & \le |c_1 \log(1+n)^{\alpha} - c_1 \log(n)^{\alpha}|^{1/2} \|u_0 ' \|_2 \le 2 c_1^{1/2} \frac{\|u_0'\|_2}{(n+1)^{1/2}} \le 4 c_1 e^{-|x|/(2c_1)} \|u_0 '\|_2. \cr 
\end{align*}
Here, we have taken $x \in (c_1 \log(n), c_1 \log(1+n)),$ without loss of generality, with $n \in \N$ sufficiently large. In particular, this shows exponential decay for the initial data and, by applying the same procedure to the data at time $t=T,$ we have 
\[
|u(x,T)| \le 4 c_2^{1/2} e^{-|x|/(2c_2)} \|\partial_x u(T)  \|_2.
\]
By Lemma \ref{lemma:self-improving}, this readily implies that $u \in C([0,T] \colon H^k \cap L^2(x^{2k})),$ for any $k \in \N.$ In particular, we know that $ \partial_x^k  u_0 , \partial_x^k u(T)\in L^2(\R), \, \forall k \in \N.$ 

\
Before moving on, we divert our attention for a second towards the following property, which we state as a Lemma: 

\begin{lemma} Let $f: \R \to \C$ be a (smooth) function with a sequence of zeros at $\{x_n\}_{n \ge 0}.$ Write $f = u + i v.$ We may find, for each $k \ge 0,$ sequences $\{a^{(k)}_n\}_{n \ge k}, \{b^{(k)}_n\}_{n \ge k}$ so that the following conditions are met: 
\begin{enumerate}
\item $\{a^{(0)}_n\}_{n \ge 0} = \{ b^{(0)}_n \}_{n \ge 0} = \{x_n\}_{n \ge 0};$ 
\item For each $n \ge k,$ we have $a^{(k)}_{n} \in (a^{(k-1)}_{n-1}, a^{(k-1)}_{n})$ and $b^{(k)}_{n} \in (b^{(k-1)}_{n-1}, b^{(k-1)}_{n});$
\item For each $n \ge k,$ we have $\partial_x^k u(a^{(k)}_n) = \partial_x^k v( b^{(k)}_n ) = 0.$ 
\end{enumerate}
\end{lemma}

The proof of this result follows by a simple inductive argument, using the mean value theorem, so we omit it. Notice that, in particular, if we start off with $x_n = c \log(1+n)^{\alpha},$ with $c \in \{c_1,c_2\},$ we may take $f$ to be either $u_0$ or $u(T).$ This implies that there are, for each $k \ge 0,$ sequences 
$$\{a^{(k)}_n\}_{n \ge k}, \{b^{(k)}_n\}_{n \ge k}, \{ c^{(k)}_n\}_{n \ge k}, \, \{d^{(k)}_n\}_{n \ge k},$$
so that 
\[
\partial_x^k \text{Re} (u_0)(a^{(k)}_n) = \partial_x^k \text{Im}(u_0)(b^{(k)}_n)
\]
\[
= \partial_x^k \text{Re}(u(T)) (c^{(k)}_n) = \partial_x^k \text{Im}(u(T))(d^{(k)}_n) = 0, \, \forall \, n \ge k, \, k \ge 0.  
\]
In particular, from the inductive construction of these sequences, we see that $\eta^{(k)}_n \in (c \log(n-k), c \log(1+n)),$ where $c \in \{c_1,c_2\}$ and $\eta \in \{a,b,c,d\}.$ 

Now, in order to conclude the argument, notice that, by applying the argument, which we used to deduce decay above, to the $k-$th derivatives of the real and imaginary parts of $u_0$ and $u(T),$ we have that, for some $C(k) > 0,$ 
\[
|\partial_x^k \text{Re} (u_0)(x)| + |\partial_x^k \text{Im}(u_0)(x)| \le C(k) c_1^{1/2} e^{-|x|/(2c_1)} \| \partial_x^{k+1} (u_0) \|_2,
\]
and analogously, 
\[
|\partial_x^k  \text{Re}(u(T))(x)| + | \partial_x^k \text{Im}(u(T))(x)| \le C(k) c_2^{1/2} e^{-|x|/(2c_2)} \| \partial_x^{k+1} (u(T)) \|_2. 
\]
Suppose then $y \in (a^{(k)}_n,a^{(k)}_{n+1}).$ Then 
\[
|\partial_x^{k-1} \text{Re}(u_0)(y)| \le \int_y^{a^{(k)}_{n+1}} |\partial_x^k \text{Re}(u_0)(x)| \, dx \le \tilde{C}(k) c_1 e^{-|y|/c_1} \|\partial_x^{k+1} (u_0)\|_2.
\]
Analogous estimates hold for $\partial_x^{k-1} \text{Im}(u_0), \partial_x^{k-1} \text{Re}(u(T)), \partial_x^{k-1} \text{Im}(u(T)),$ and we may use this process of improving backwards in order to obtain that, for each $k > 0,$ 
\begin{align*}
|u_0(x)| \le & |\text{Re}(u_0)(x)| + |\text{Im}(u_0)(x)| \le C'(k) c_1^{k/2} e^{-k|x|/(2c_1)} \|\partial_x^{k+1} (u_0)\|_2,\cr 
|u(x,T)| \le & |\text{Re}(u(T))(x)| + |\text{Im}(u(T))(x)| \le C'(k) c_2^{k/2} e^{-k|x|/(2c_2)} \| \partial_x^{k+1} (u(T)) \|_2. \cr 
\end{align*}
By making $k > 0$ sufficiently large, we conclude the assertion, as desired. 
\end{proof}

In higher dimensions, we have a slightly worse version of the result above. 

\begin{lemma}\label{lemma-decay-from-zeros-high-dim} Let $u \in C([0,T] \colon H^s(\R^d))$, $s > \frac{d}{2}-1$ be a strong solution to \eqref{eq:cubic-nls}, where $d \ge 2.$ If $d=2$ or $d=3$, assume additionally that $s\geq 1$. Suppose that 
	\[
	u_0(\pm c_1 \log(1+n)^{\alpha}\mathbb{S}^{d-1}) = u(\pm c_2 \log(1+n)^{\alpha} \mathbb{S}^{d-1}, T) = 0, \, \forall \, n \ge 0,
	\]
	where the identity is to be interpreted in the sense of Sobolev traces, for some $\alpha \in (0,1].$ Then, for any $N \geq  0$ fixed real number, there is a positive constant $c_N > 0$ so that
	\[
	|\partial^{\alpha} u_0(x)| + |\partial^{\alpha}u(x,T)| \lesssim_{N,u_0,u(T)} e^{-c_N|x|}, 
	\]
	for all multiindices $\alpha$ so that $|\alpha| \le N.$ 
\end{lemma}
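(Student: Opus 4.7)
The approach is to mimic the proof of \cref{lemma-decay-from-zeros}, replacing the one-dimensional fundamental theorem of calculus with Poincaré's inequality on spherical shells. First, I would bootstrap the regularity of the endpoints: applying \cref{lemma:self-improving-general-dimension} (iterated in $k$, using the weighted-$L^2$ hypotheses carried in from \cref{thm:rigidity-high-dim}) gives $u \in C([0,T];H^{k,k}(\R^d))$ for every positive integer $k$, so that both $u_0$ and $u(T)$ are smooth, all their derivatives lie in every polynomially-weighted $L^2$ space, and by Sobolev embedding $\nabla^k u_0$ and $\nabla^k u(T)$ are bounded on $\R^d$ for every $k$.

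Second, set $r_n := c_1 \log(1+n)^{\alpha}$ and let $A_n := \{r_n \leq |x| \leq r_{n+1}\}$. Since the trace of $u_0$ on $\partial A_n$ vanishes by hypothesis, Poincaré's inequality on the shell gives
\[
\|u_0\|_{L^2(A_n)} \;\le\; C\,(r_{n+1}-r_n)\,\|\nabla u_0\|_{L^2(A_n)} \;\lesssim\; \frac{(\log n)^{\alpha-1}}{n},
\]
using that $\|\nabla u_0\|_{L^2(\R^d)}$ is finite. Writing $|x|\asymp r_n$ as $n \asymp \exp((|x|/c_1)^{1/\alpha})$, this already gives at least exponential decay in $|x|$, and in fact decay of the form $\exp(-c\,|x|^{1/\alpha})$, which is super-exponential for $\alpha<1$ (matching the remark made after \cref{thm:elliptic}). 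The same reasoning applies verbatim to $u(T)$ with $c_2$ replacing $c_1$.

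Third, I would upgrade the shell-wise $L^2$ bound to a pointwise bound. For $x$ with $|x|$ large, cover the unit ball $B_1(x)$ by the annuli $A_n$ it meets. Although the number of such indices is comparable to $\exp((|x|/c_1)^{1/\alpha})$, each satisfies $\|u_0\|_{L^2(A_n)}^2 \lesssim 1/n^2$, so a direct summation gives $\sum_{A_n\cap B_1(x)\neq\emptyset}\|u_0\|^2_{L^2(A_n)} \lesssim 1/n_0$, where $n_0$ is the smallest index in the sum; hence
\[
\|u_0\|_{L^2(B_1(x))} \;\lesssim\; \exp\!\bigl(-c\,|x|^{1/\alpha}\bigr).
\]
A local Gagliardo--Nirenberg interpolation on the fixed-radius ball $B_1(x)$, together with the uniform $L^\infty$-bound on $\nabla^k u_0$ provided by the first step, then yields $|u_0(x)|\lesssim \exp(-c|x|)$. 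For multi-indices $|\alpha|\le N$, a further local Landau--Kolmogorov interpolation between this exponential decay of $u_0$ and the $L^\infty$-boundedness of $\nabla^{N+1}u_0$ gives $|\partial^{\alpha} u_0(x)|\lesssim \exp(-c_N|x|)$ for some $c_N>0$; the identical argument handles $u(T)$.

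The main obstacle I anticipate is the passage in the third step: the shells $A_n$ have radial thickness $\sim (\log n)^{\alpha-1}/n$, so when $\alpha<1$ super-exponentially many of them accumulate near any far-away point $x$. The fact that the shell $L^2$-bound nevertheless survives this summation reduces, after careful bookkeeping, to the summability of $\sum 1/n^{2}$, and this compensation between the large number of shells and the smallness of each shell bound is the analytic crux of the argument. Everything else in the proof is a routine application of Sobolev and interpolation inequalities on balls of fixed size.
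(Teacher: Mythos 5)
Your overall strategy matches the paper's: Poincaré on thin spherical shells to get a weighted $L^2$ estimate, regularity/decay bootstrap via \cref{lemma:self-improving-general-dimension}, and a local interpolation to pass from $L^2$ smallness on unit balls to pointwise exponential decay (the paper does this via \cref{prop:decay-pointwise}'s ball-shrinking argument, you via Gagliardo--Nirenberg; both work). Your Poincaré constant $\lesssim (r_{n+1}-r_n)$ on thin shells is in fact sharper than the paper's Faber--Krahn bound $\lesssim (R^d-r^d)^{2/d}$, and you correctly observe that this yields super-exponential decay $\exp(-c|x|^{1/\alpha})$ for $\alpha<1$ — the paper claims only exponential in this lemma but invokes the stronger rate later, in the discussion following \cref{thm:elliptic}.

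However, the order in which you run the argument has a genuine circularity. In Step 1 you want to deduce $u\in C([0,T];H^{k,k})$ \emph{for every} $k$ by ``iterating'' \cref{lemma:self-improving-general-dimension}, invoking only the weighted-$L^2$ hypothesis from \cref{thm:rigidity-high-dim}. But that hypothesis supplies $u_0,u(T)\in L^2(\langle x\rangle^{2k})$ for one fixed $k\geq d/2$, and the conclusion of \cref{lemma:self-improving-general-dimension} for that $k$ returns $u\in C([0,T];H^k\cap L^2(\langle x\rangle^{2k}))$ — the \emph{same} polynomial weight you started with. The iteration does not close: to apply the lemma with a larger $k'$ you need $u_0,u(T)\in L^2(\langle x\rangle^{2k'})$, which requires knowing more spatial decay than you have before the Poincaré step. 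Consequently, the uniform $L^\infty$ bounds on $\nabla^k u_0$, $\nabla^k u(T)$ for \emph{every} $k$ that Step 1 advertises (and Step 3 then uses) are not available at that point. The fix is exactly the paper's ordering: the Poincaré/annulus estimate only needs $u_0,u(T)\in H^1$ (guaranteed by the stated hypotheses $s\geq 1$, $s>d/2-1$), and once it delivers the exponentially weighted $L^2$ bound one has $u_0,u(T)\in L^2(\langle x\rangle^{2k})$ for \emph{all} $k$, after which \cref{lemma:self-improving-general-dimension} can be applied at every order and the interpolation/Landau--Kolmogorov upgrade for derivatives proceeds as you describe. With that reordering your proof is correct and matches the paper's.
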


\begin{proof} The proof consists of two steps, one where one obtains exponential decay for both $u_0, u(T)$, and an interpolation argument in order to obtain exponential decay for higher derivatives. 

\noindent\textbf{Step 1: $u_0, u(T)$ are bounded by $e^{-c_0|x|},$ where $c_0 >0$ depends on the dimension and on the parameters $c_1,c_2.$} For this step, the idea is to use Poincar\'e's inequality between spheres where zeros are located. 

Indeed, for that we will need the following proposition:

\begin{proposition} Let $C(\Omega)$ denote the best constant in the Poincar\'e inequality 
	\[
	\|u\|_{L^2(\Omega)} \le C(\Omega) \|\nabla u\|_{L^2(\Omega)}
	\]
	for $u\in H_0^1(\Omega)$.
If $\Omega = B_R \setminus B_r, \, r \in (0,R),$ then 
$$C(\Omega) \lesssim_d (R^d - r^d)^{2/d}.$$  
\end{proposition}

\begin{proof}[Proof of the Proposition] It is a well-known fact that the best constant in Poincar\'e's inequality in a domain $\Omega$ is $1/\lambda_1,$ where $\lambda_1$ is the first eigenvalue of the Laplacian with Dirichlet boundary conditions on $\Omega.$ Thus, we only need to estimate this eigenvalue. 

In order to do that, we use the Faber--Krahn inequality, which says that 
\[
\lambda_1(\Omega) \ge \lambda_1 (B),
\]
where $B$ is a ball with $|B| = |\Omega|.$ Thus, taking into account the eigenvalue of the Dirichlet problem for the ball, we have 
\[
\lambda_1(\Omega) \ge c_d \cdot \left( \frac{1}{|\Omega|} \right)^{2/d}, 
\]
where $c_d = \pi \cdot \Gamma(d/2 + 1)^{2/d} \cdot j_{d/2 - 1},$ where $j_{m,1}$ denotes the first positive zero of the Bessel function $J_m.$ Thus, we obtain the control 
\[
C(\Omega) \le \frac{1}{c_d} |\Omega|^{2/d} = \tilde{c}_d (R^d - r^d)^{2/d}.
\]
This ends the proof of the proposition.
\end{proof}

With this proposition, we use the Poincar\'e inequality above with $u_0, u(T)$ in each of the annuli $A_{i,n} ;= B_{c_i \log(1+n)^{\alpha}} \setminus B_{c_i \log(n)^{\alpha} }, n \ge 1.$ We obtain that 
\begin{align*}
\|u_0\|_{L^2(A_{1,n})}& + \|u(T)\|_{L^2(A_{2,n})} \\ & \lesssim \log(n+1)^{2\alpha\cdot \frac{d-1}{d}}  (\log(n+1)^{\alpha} - \log(n)^{\alpha})^{2/d} \left(\|\nabla u_0\|_{L^2(A_{1,n})} + \|\nabla u(T)\|_{L^2(A_{2,n})} \right) \cr 
  & \lesssim  \frac{\log(n+1)^{2\alpha\cdot \frac{d-1}{d}}}{(n+1)^{2/d}} \left(\|\nabla u_0\|_{L^2(A_{1,n})} + \|\nabla u(T)\|_{L^2(A_{2,n})} \right). \cr 
\end{align*}
Reordering terms, we obtain that 
\[
\left\|\frac{e^{2|x|/(d c_1)}}{(1+|x|)^2} u_0 \right\|_{L^2(A_{1,n})} + \left\|\frac{e^{2|x|/(d c_2)}}{(1+|x|)^2} u(T) \right\|_{L^2(A_{2,n})} \lesssim  \|\nabla u_0\|_{L^2(A_{1,n})} + \|\nabla u(T)\|_{L^2(A_{2,n})}. 
\]
Summing over $n \ge 0,$ we obtain that 
\[
\left\|\frac{e^{2|x|/(d c_1)}}{(1+|x|)^2} u_0 \right\|_2+ \left\|\frac{e^{2|x|/(d c_2)}}{(1+|x|)^2} u(T) \right\|_2 \lesssim  \|\nabla u_0\|_{L^2} + \|\nabla u(T)\|_{L^2}. 
\]
We shall now use the following proposition, whose proof is inspired in that of \cite[Prop~3.1]{ramos2021perturbed}:
\begin{proposition}\label{prop:decay-pointwise} Let $f \in L^2(e^{2a|x|}dx), \nabla f \in L^{\infty}.$ Then $|f(x)| \leq C e^{-\frac{a}{d+2}|x|}, \, \forall x \in \R^d$, where $C$ depends on $\|f\|_{L^\infty}, \|\nabla f \|_{L^\infty}, \|f\|_{L^2 (e^{a|x|} dx) }$.
\end{proposition} 
 \begin{proof}
Without loss of generality, $f$ is not identically zero. As $\nabla f \in L^\infty$, we have that $f$ is absolutely continuous so it suffices to consider large $|x|$. A standard argument shows that $\nabla f \in L^\infty$ and $f\in L^2$ imply that $f(x) \to 0$ as $x\to \infty$. In particular, $f\in L^\infty$. 	Now, for each $x \in \R^d,$ fix a ball $B_x$ centered at $x,$ with radius $r_x>0$ to be determined in a while. Then, for any $y \in B_x,$  we have 
 	\[
 	|f(x) - f(y)| \le \left(\sup_{z \in B_x} |\nabla f(z)| \right) |x-y| \le 2r_x \|\nabla f\|_{\infty}.
 	\] 
 	We then pick $r_x = \frac{|f(x)|}{4\|\nabla f\|_{\infty}},$ and thus $|f(y)| \ge |f(x)|/2.$ Note also that $\sup_x r_x <\infty$ as $f\in L^\infty$.  Moreover, if $|x|$ is sufficiently large, then triangle's inequality shows that $|y| \ge |x|/2, \, \forall y \in B_x.$ Therefore, for such $x,$
   \[
 	\|f e^{a|\cdot|}\|_2^2 \ge \int_{B_x}| e^{a|y|}f(y) |^2 \, d y \gtrsim r_x^d e^{a|x|} |f(x)|^2 \gtrsim_{f} |f(x)|^{d+2} e^{{a}|x|}.
 	\]
 	This readily implies that $|f(x)| \lesssim_f e^{-\frac{a}{(d+2)} |x|}, \forall |x| \gg 1.$  This finishes the proof.  
\end{proof}
We now employ Lemma \ref{lemma:self-improving-general-dimension} for $u_0, u(T).$ This shows, in particular, that $\nabla u_0, \nabla u(T) \in L^{\infty}.$ By Proposition \ref{prop:decay-pointwise}, we are able to finish \textbf{Step 1.} \\

\noindent\textbf{Step 2: Propagating decay for derivatives.} For this step, we use an inductive argument on the number $N = |\alpha|$. In particular, if $N = 0,$ we are done by \textbf{Step 1.} 

Suppose our result holds for $N \in \N$ fixed. By Lemma \ref{lemma:self-improving-general-dimension}, we are able to deduce that derivatives of \emph{all} orders are bounded. We use this fact, together with a Taylor expansion of order one in order to conclude. 

Indeed, fixing $\alpha$ a multiindex so that $|\alpha| = N+1,$ let $j \in \{1,\dots,d\}$ be so that $\alpha_j > 0.$ In particular, we may perform a Taylor expansion of order one of the function $\partial_x^{\alpha-e_j} u_0(x+te_j).$ In particular, we know that, since the derivative $\partial_x^{\alpha+e_j}u_0$ belongs to $L^{\infty}(\R^d),$
\[
\left| \partial_x^{\alpha-e_j} u_0(x+te_j) - \partial_x^{\alpha-e_j}u_0(x) - t \partial_x^{\alpha} u_0(x) \right| \lesssim_{\| \partial_x^{\alpha + e_j} u_0 \|_{L^\infty}} t^2. 
\]

Thus, reordering terms, 
\begin{align*}
|\partial_x^{\alpha}u_0(x)| & \lesssim_{u_0} t^{-1}\left( | \partial_x^{\alpha-e_j} u_0(x+te_j)| +  |\partial_x^{\alpha-e_j}u_0(x)| \right)  + t \cr 
 & \lesssim_{u_0} t^{-1} \left( e^{-c_N|x+te_j|} + e^{-c_N|x|} \right) + t, \cr 
\end{align*}
where we used the induction hypothesis in the last line. In particular, we may choose $t = e^{-\frac{c_N}{2} |x|}$ above. Routine computations then imply 
\[
|\partial_x^{\alpha}u_0(x)| \lesssim_N e^{-\frac{c_N}{2}|x|}.
\]
This closes the inductive argument, and we are done. 
\end{proof}

\section{Proof of the one-dimensional results}\label{sec:one-dim}
\subsection{Proof of the unique continuation results} 
\label{sec:one-dim-A}
\begin{proof}[Proof of Theorem \ref{thm:first-weak-thm}]
By using the invariance of the Schr\"odinger equation and the associated properties of the potential (see, for instance, \cite[p.~180]{EKPV10}), we may suppose that $T=1.$ 

We   now use the Duhamel formula for representing the solution to write for some $\theta >0$
\begin{align}\label{eq:Duhamel-thm-1}
u(x,t) = e^{it\Delta}u_0(x) + \int_0^t e^{(t-\tau)i\Delta} (V(\tau) u(\tau))(x) \, d\tau, \, \forall \, t \in [0,1].
\end{align}

\noindent \textbf{Step 1: $u_0,u(1)$ are $\frac 12$-Hölder continuous.}
Using \eqref{eq:Duhamel-thm-1} we write 
\begin{align*}
|u(x,1)-u(y,1)| & \leq  |e^{i\Delta} u_0 (x) - e^{i\Delta} u_0 (y)| \\
& + \int_0^{1-\theta} |e^{(1-s)\Delta} ( V(s) u(s))(x) - e^{(1-s)\Delta} ( V(s) u(s))(y)| ds\\
& + \int^1_{1-\theta} |e^{(1-s)\Delta} ( V(s) u(s))(x) - e^{(1-s)\Delta} ( V(s) u(s))(y)| ds.
\end{align*}
For the first term we use the first bound in \cref{lemma-decay-free-eq} to obtain
\begin{align*}
    |e^{i\Delta} u_0 (x) - e^{i\Delta} u_0 (y)|\lesssim \max({|x|,|y|})  \| (1+|z|) u_0(z) \|_{L^1} |x-y|.
\end{align*}
For the second term we again use the first bound in \cref{lemma-decay-free-eq} and obtain
\begin{align*}
   &  \int_0^{1-\theta} |e^{(1-s)\Delta} ( V(s) u(s))(x) - e^{(1-s)\Delta} ( V(s) u(s))(y)| ds\\
     &\lesssim \max(|x|,|y|)|x-y| \int_0^{1-\theta} \frac{\| (1+|z|) V(z,s) u(z,s)\|_{L^1} }{(1-s)^{\frac 32}}  d s \\
     & \lesssim \max(|x|,|y|)|x-y| \theta^{-\frac 12} \| V u \|_{L^\infty_{[0.1]} L^1((1+|z|) dz)}.
\end{align*}
For the third term we use the second bound in \cref{lemma-decay-free-eq} to obtain 
\begin{align*}
  \int^1_{1-\theta} |e^{(1-s)\Delta} ( V(s) u(s))(x) - e^{(1-s)\Delta} ( V(s) u(s))(y)| ds & \lesssim   \int^1_{1-\theta} \frac{ \|V(s) u(s)\|_{L^1} }{(1-s)^{\frac 12} } ds 
  \\ &\lesssim \theta^{\frac 12} \| V u \|_{L^\infty_{[0,1]} L^1}.
\end{align*}
 We first observe that \begin{align*} \| V u \|_{L^\infty_{[0,1]} L^1} \lesssim \|V\|_{L^\infty_{[0,1]} L^2} \|u\|_{L^\infty_{[0,1]} L^2} < \infty, \end{align*} as well as \begin{align*}\| V u \|_{L^1_{[0,1]} L^1((1+|z|)dz) } \lesssim \|V\|_{L^\infty_{[0,1]} L^2((1+|z|^2) d z) } \|u\|_{L^\infty_{[0,1]} L^2} < \infty\end{align*} in view of the assumptions of \cref{thm:first-weak-thm}. Then, choosing $\theta = |x-y| \max(|x|,|y|)$   shows that \begin{align}
    |u(x,1)-u(y,1)|\leq C(\|V\|_{L^\infty_{[0,1]} L^2((1+|z|^2) d z )}, \|u\|_{L^\infty_{[0,1]} L^2}) |x-y|^{\frac 12} \max(|x|,|y|)^{\frac 12}
\end{align}
which shows that $u(\cdot, 1)$ is $\frac 12$-Hölder continuous. Evolving backwards, we analogously obtain $\frac 12$-Hölder continuity for $u_0$. \\

\noindent\textbf{Step 2: Decay estimate from the zeros.} With the $C^{\frac 12}$-continuity at hand, we will make use of the pointwise assumptions. The idea is then to compare the solution at two different points for times $t=0,1.$ In order to do so, we argue similarly to \textbf{Step 1} and make use of   \cref{lemma-decay-free-eq}.   Indeed, suppose that $|u(\pm c_1 \log(1+n)^{\alpha},1)| \le n^{-\delta},$ for some $\delta > 0.$ Suppose that \begin{align*} x \in ( c_1 \log(1+n)^{\alpha}, c_1 \log(2+n)^{\alpha}).\end{align*} Then, as before, using the Duhamel formula for $t=1,$ 
\begin{align}\label{eq:estimate-time-1-dense-seqs}
|u(x,1)| \le & |u(x,1) - u(c_1 \log(1+n)^{\alpha},1)| + n^{-\delta} \cr 
 & \le |e^{i\Delta}u_0(x) - e^{i\Delta}u_0(c_1 \log(1+n)^{\alpha})| \cr 
 & + \int_0^{1-\theta} |e^{(1-s)i\Delta} (V(s) u(s))(x) - e^{(1-s)i\Delta} (V(s) u(s))(c_1 \log(1+n)^{\alpha})| \, ds \cr 
& + \int_{1-\theta}^1   |e^{(1-s)i\Delta} (V(s) u(s))(x) - e^{(1-s)i\Delta} (V(s) u(s))(c_1 \log(1+n)^{\alpha})| \, ds + n^{-\delta}. \cr
\end{align}
We may use the first bound in   \cref{lemma-decay-free-eq} directly in the first term in the sum above. As $x \in  ( c_1 \log(1+n)^{\alpha}, c_1 \log(2+n)^{\alpha}),$ then $|c_1 \log(1+n)^{\alpha}| \le 2|x|$ if $n \ge 1.$ This implies that 
\begin{align*}
|e^{i\Delta}u_0(x) - e^{i\Delta}u_0(c_1 \log(1+n)^{\alpha})| & \le C|x||x - c_1 \log(1+n)^{\alpha}| \|(1+|z|)u_0(z)\|_1 \cr 
 																						& \le C\cdot c_1 |x| |\log(2+n)^{\alpha} - \log(1+n)^{\alpha}| | \|(1+|z|)u_0(z)\|_1  \cr 
 																						& \le \tilde{C} |x| e^{-\frac{1}{2} |x|^{1/\alpha}} \|(1+|z|)u_0(z)\|_1. \cr 
\end{align*}
Similarly to \textbf{Step 1} we estimate
\begin{align*}
\int_0^{1-\theta}  & |e^{(1-s)i\Delta} (V(s) u(s))(x) - e^{(1-s)i\Delta} (V(s) u(s))(c_1 \log(1+n)^{\alpha})| \, ds \\
& + \int_{1-\theta}^1   |e^{(1-s)i\Delta} (V(s) u(s))(x) - e^{(1-s)i\Delta} (V(s) u(s))(c_1 \log(1+n)^{\alpha})| \, ds \\
 \le &  C |x||x - c_1 \log(1+n)^{\alpha}|  \int_0^{1-\theta} \frac{\|(1+|z|) V(z,s) u(z,s)\|_1}{(1-s)^{3/2}} \, ds  \\ 
& + C \int_{1-\theta}^1 \frac{\|V(z,s)u(z,s)\|_1}{(1-s)^{1/2}} \, ds \le C \theta^{-1/2}  |x|e^{-\frac{1}{2}|x|^{1/\alpha}} \| Vu\|_{L^\infty_{[0,1]} L^1((1+|z|)dz)}   \\
& + C \theta^{1/2} \|Vu\|_{L^{\infty}_{[0,1]} L^1(\R)}. 
\end{align*}
We want to make both estimates on the right-hand side above compatible, so we take $\theta = |x|e^{-\frac{1}{2}|x|^{1/\alpha}} \frac{\| Vu\|_{L^\infty_{[0,1]} L^1((1+|z|)dz)}}{\|Vu\|_{L^{\infty}_{[0,1]}  L^1(\R)}}.$ This yields that the second and third terms in \eqref{eq:estimate-time-1-dense-seqs} are bounded by 
\[
C|x|^{1/2} e^{-\frac{1}{4} |x|^{1/\alpha}} \| Vu\|_{L^\infty_{[0,1]} L^1((1+|z|)dz)}^{1/2} \|Vu\|_{L^{\infty}_{[0,1]}  L^1(\R)}^{1/2}. 
\]
On the other hand, by a simple use of Cauchy--Schwarz, this last term is bounded by 
\[
C|x|^{1/2} e^{-\frac{1}{4} |x|^{1/\alpha}} \| u\|_{L^{\infty}_{[0,1]} L^2(\R)} \|V\|_{L^{\infty}_{[0,1]} L^2((1+|z|)^2 dz)}.
\]
Finally, as $|x| \le c_1 \log(2+n)^{\alpha}$ and \begin{align*}(2+n)^{-\delta} = e^{- \delta \log(n+2)} = e^{- \delta \frac{(c_1 \log(n+2)^{\alpha})^{1/\alpha}}{c_1^{1/\alpha}}} \le e^{-\frac{\delta}{c_1^{1/\alpha}} |x|^{1/\alpha}},\end{align*} we may estimate then 
\begin{equation}\label{eq:estimate-time-1-solution}
|u(x,1)| \le C|x| e^{-\omega |x|^{1/\alpha}} \left( 1 + \|(1+|z|)u_0\|_1 + \| u\|_{L^{\infty}_{[0,1]} L^2(\R)} \|V\|_{L^{\infty}_{[0,1]} L^2((1+|z|)^2 dz)} \right),
\end{equation}
where $\omega = \min(1/4, \delta/c_1^{1/\alpha}).$ By using that the equation \eqref{eq:estimate-time-1-dense-seqs} is time-reversible, we obtain together with \eqref{eq:estimate-time-1-solution} that 
\begin{equation}\label{eq:estimate-both-times-solution}
|u(x,1)| + |u_0(x)| \le K(u,V) |x| e^{-\tilde{\omega} |x|^{1/\alpha}},
\end{equation}
where we let $\tilde{\omega} = \min\{ \omega, \delta/c_2^{1/\alpha} \},$ and we define the constant $K(u,V)$ to be 
$$C \left( 1 + \|(1+|z|)u_0\|_1 + \|(1+|z|)u(1)\|_1 +\| u\|_{L^{\infty}_{[0,1]} L^2(\R)} \|V\|_{L^{\infty}_{[0,1]} L^2((1+|z|)^2 dz)} \right).$$ 
We now simply invoke the result by Escauriaza--Kenig--Ponce--Vega \cite{EKPV10}, even in a weak form: if $V$ satisfies the conditions of Theorem \ref{thm:first-weak-thm} and $u_0, u(1) \in L^2(e^{A|x|^2},\R)$ for $A \gg 1$ sufficiently large, then $u \equiv 0.$ By \eqref{eq:estimate-both-times-solution}, we see that this condition is fulfilled as $\alpha < \frac 12$, and thus we have finished the proof of the Theorem. 
\end{proof} 

We are now able to use   \cref{thm:first-weak-thm} in order to conclude Corollary \ref{thm:uniqueness-one-dim} about unique continuation for solutions to Schr\"odinger equations. 

\begin{proof}[Proof of Corollary \ref{thm:uniqueness-one-dim}] If $u,v \in C([0,T] \colon H^s(\R) \cap L^2(x^2 \, dx))$ are solutions to \eqref{eq:schrodinger-nonlinear-1}, then $u-v =: \tilde{w}$ satisfies \eqref{eq:schrodinger-potential-1} for the potential 
\[
V = \frac{F(u,\overline{u}) - F(v,\overline{v})}{u-v}.
\]
If $F \in C^1(\C^2 \colon \C),$ one may find by \eqref{eq:condition-nonlinear}  that 
\begin{equation}\label{eq:potential-bound-1}
|V(x,t)| \le C(|u(x,t)|^{p-1} + |v(x,t)|^{p-1}).
\end{equation}
As $u,v \in L^{\infty}([0,T] \colon L^{\infty}(\R)),$ the conditions on $u,v$ imply that $V \in L^1([0,T] \colon L^{\infty}(\R)) \cap L^{\infty}([0,T] \colon L^2((1+|x|)^2 dx)).$ By \eqref{eq:potential-bound-1}, $V$ is seen to satisfy the other conditions in   \cref{thm:first-weak-thm}, and thus we may apply that result directly, which implies that $w \equiv 0,$ or equivalently $v \equiv u.$ 
\end{proof}

\subsection{Proof of the one-dimensional rigidity result}
\label{sec:one-dim-B}

\begin{proof}[Proof of    \cref{thm:rigidity}] We start off by noticing that from \cref{lemma:self-improving} from $k=1$, the solution $u \in C([0,T] \colon H^1(\R)).$ From Lemma \ref{lemma-decay-from-zeros}, we have that 
\[
|u_0(x)| + |u(x,T)| \lesssim_N e^{-N|x|}, \, \forall N \ge 1.
\]
We then use the following result by Kenig--Ponce--Vega:

\begin{lemma}[{\cite[Lemma~2.1]{KPV03}}]
\label{lemma-KPV}
For any $T>0$, there is $\varepsilon(T)>0$ so that the following holds: if $\lambda \in \R^d$ is a vector, $V\colon \R^d \times [0,T]\to\mathbb C$ is a potential so that 
\[
\|V\|_{L^1([0,T];L^{\infty}(\R^d))} \le \varepsilon,
\]
and $w \in C([0,T] \colon L^2(\R^d))$ is a solution to 
\[
i\partial_t w =- \Delta w + V\cdot w + F,
\] 
then 
\[
\sup_{t \in [0,T]} \|e^{\lambda \cdot x} w(t)\|_{L^2(\R^d)} \lesssim \|e^{\lambda \cdot x} w_0\|_{L^2(\R^d)} + \|e^{\lambda \cdot x} w(T)\|_{L^2(\R^d)} + \|e^{\lambda \cdot x} F(x,t)\|_{L^1([0,T]; L^2(\R^d))}. 
\]
\end{lemma}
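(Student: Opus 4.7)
The plan is to follow the log-convexity strategy developed by Escauriaza--Kenig--Ponce--Vega. First, set $v(x,t) := e^{\lambda \cdot x} w(x,t)$, so that $v$ satisfies the conjugated equation
\begin{equation}
i \partial_t v = -\Delta v + 2 \lambda \cdot \nabla v - |\lambda|^2 v + V v + G, \qquad G := e^{\lambda \cdot x} F,
\end{equation}
and the claim reduces to $\sup_{t \in [0,T]} \|v(t)\|_{L^2(\R^d)} \lesssim \|v(0)\|_{L^2(\R^d)} + \|v(T)\|_{L^2(\R^d)} + \|G\|_{L^1([0,T];L^2(\R^d))}$. After a standard regularization of the data and of $V$, I may assume $v$ is smooth enough to justify the spatial integrations by parts below, and pass to the limit at the end.

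Next, set $H(t) := \|v(t)\|_{L^2}^2$. For the free part of the equation (dropping the $Vv$ and $G$ terms), direct computation using the equation together with integration by parts yields
\begin{equation}
H'(t) = 4\lambda \cdot J(t), \qquad J(t) := \text{Im}\int \bar v\, \nabla v\, dx,
\end{equation}
and, after another careful pair of integrations by parts,
\begin{equation}
H''(t) = 16 \int |\lambda \cdot \nabla v|^2\, dx.
\end{equation}
Cauchy--Schwarz applied to the pairing of $\bar v$ with $\lambda \cdot \nabla v$ in $L^2$ then gives $(H'(t))^2 \le H(t) H''(t)$; equivalently, $t \mapsto \log H(t)$ is convex on $[0,T]$. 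Consequently $H(t) \le H(0)^{1-t/T}\, H(T)^{t/T}$, and AM--GM delivers the desired bound in the absence of the potential and the source.

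To incorporate $Vv$ and $G$, I would repeat the computation tracking the new contributions. These add to $H'$ the terms $2\int (\text{Im}\, V)\,|v|^2\, dx$ and $2\,\text{Im}\int G\,\bar v\, dx$, controlled respectively by $\|V(t)\|_{L^\infty} H(t)$ and $\|G(t)\|_{L^2} H(t)^{1/2}$, with analogous corrections appearing in $H''$. Feeding these through the Cauchy--Schwarz argument produces a perturbed log-convexity inequality of the schematic shape
\begin{equation}
\frac{d^2}{dt^2} \log H(t) \ge -C\, \|V(t)\|_{L^\infty} - \Psi(t),
\end{equation}
where $\Psi$ is controlled in $L^1([0,T])$ by $\|G\|_{L^1([0,T];L^2)}$. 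Integrating twice on $[0,T]$ and invoking the smallness $\|V\|_{L^1([0,T];L^\infty)} \le \varepsilon(T)$, the $V$-contribution is absorbed into a harmless multiplicative constant, and one recovers
\begin{equation}
\sup_{t \in [0,T]} \|v(t)\|_{L^2} \lesssim \|v(0)\|_{L^2} + \|v(T)\|_{L^2} + \|G\|_{L^1([0,T];L^2)}.
\end{equation}

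The main obstacle is this last perturbation step: the $V$-contribution to the second derivative of $\log H$ has no sign, and can only be absorbed when $\|V\|_{L^1([0,T];L^\infty)}$ is sufficiently small -- which is precisely the role of the threshold $\varepsilon(T)$ in the hypothesis. This smallness is what keeps the constant in the final estimate independent of $|\lambda|$, which is in turn the feature making the lemma useful in applications such as the propagation of exponential decay invoked elsewhere in the paper.
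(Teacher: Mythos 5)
The paper quotes this lemma verbatim from Kenig--Ponce--Vega and does not reproduce a proof, so there is no internal argument to compare against; what follows evaluates your attempt against what the cited proof must accomplish. Your high-level plan is the right one, and the computation in the free case is correct: writing $\partial_t v = \mathcal{S}v + \mathcal{A}v$ with $\mathcal{S}=-2i\lambda\cdot\nabla$ symmetric, $\mathcal{A}=i(\Delta+|\lambda|^2)$ anti-symmetric, $[\mathcal{S},\mathcal{A}]=0$ and $\dot{\mathcal{S}}=0$, one indeed gets $H'' = 4\|\mathcal{S}v\|_2^2 = 16\int|\lambda\cdot\nabla v|^2$, and Cauchy--Schwarz gives $(H')^2\le H H''$.

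There is, however, a genuine gap that \emph{``a standard regularization of the data and of $V$''} cannot repair: the hypotheses only control $\|e^{\lambda\cdot x}w\|_2$ at $t=0$ and $t=T$, and a priori one does not know that $H(t) = \|e^{\lambda\cdot x}w(t)\|_2^2$ is even finite for $t\in(0,T)$ --- establishing that finiteness is precisely the content of the lemma, not a technicality one can assume at the outset. You also cannot bootstrap finiteness forward from $t=0$ via Duhamel, because $e^{\lambda\cdot x}e^{it\Delta}e^{-\lambda\cdot x} = e^{it|\lambda|^2}e^{it\Delta}e^{-2it\lambda\cdot\nabla}$ involves a complex translation and is \emph{not} bounded on $L^2$; so the log-convexity interpolation between the two endpoints is the only route, and it needs $H$ to be finite to make sense. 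Regularizing $w_0$ to Schwartz data does make $H_{\mathrm{reg}}(t)$ finite, but then $H_{\mathrm{reg}}(T)$ is no longer controlled by $\|e^{\lambda\cdot x}w(T)\|_2$, and the bound you obtain does not pass to the limit. The actual argument truncates the \emph{weight}, replacing $e^{\lambda\cdot x}$ by bounded $e^{\varphi_R}$, so that $H_R(t)$ is trivially finite and $H_R(0),H_R(T)$ converge monotonically to the right quantities; but then $e^{\varphi_R}(-\Delta)e^{-\varphi_R}$ has variable coefficients, the symmetric and anti-symmetric parts $\mathcal{S}_R,\mathcal{A}_R$ no longer commute, and controlling the commutator $[\mathcal{S}_R,\mathcal{A}_R]$ uniformly in $R$ is exactly where the work of the Kenig--Ponce--Vega proof lies. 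Your proposal omits this structural difficulty entirely; the perturbative absorption of $V$ and $G$ is also only sketched, with the $H''$-level corrections (terms such as $\mathrm{Re}\langle\mathcal{S}v,Vv\rangle$, which pair a derivative of $v$ against $V$) left unquantified. Until the truncation-and-commutator step is carried out, the argument implicitly assumes what it is meant to prove.
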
 

In our case, we shall apply this result with $V = \chi_{\R^d \setminus B_R} |u|^2$ and $F = \chi_{B_R} |u|^2 u.$ We clearly see that $V$ satisfies the conditions of \cref{lemma-KPV} if we pick $R >0$ sufficiently large. Thus, by applying this result with both $N,-N$ with the estimate above, we have 
\[
\sup_{t\in [0,T]} \| e^{N|x|} u(t) \|_2 \le C (\, \|e^{N|x|} u_0 \|_2 + \| e^{N|x|} u(T)\|_2 \, ) + e^{NR} \|u\|_{L^2_T L^{\infty}_x}^2 \|u\|_{L^{\infty}_T L^2_x}.
\]

In particular, we have that $e^{N|\cdot|} u(\cdot,t) \in L^2$ for all $t \in [0,T].$ Now,   we use   \cref{lemma:analyticity} as well as \eqref{eq:analyticity-propagation} in order to conclude that there is a time \begin{align*}T' = T'(\|e^{N|x|} u_0\|_2,\|e^{N|x|}u(T)\|_2, \|u_0\|_{H^1}, \|u(T)\|_{H^1})\end{align*} so that for all $t$, $e^{\frac{ix^2}{4 \tau}} u(t+\tau)$ admits an analytic continuation in a strip of width $N \tau,$ where $\tau < T'.$ 

In particular, if we take $t = T - \frac{T'}{2}, \tau = \frac{T'}{2}$ we will obtain that $e^{\frac{ix^2/4}{T'/2}} u(T)$ is analytic and bounded in a strip $S(r), r < \frac{N T'}{2}.$ 

In order to conclude, we need only to invoke Lemma \ref{lemma-analytic-part}. In fact, for $r < \frac{ N T'}{2}$ as above, we know that \eqref{eq:condition-zeros} holds in particular for all real zeros of $e^{\frac{ix^2/4}{T'/2}} u(T)$. But then we should have 
\begin{equation}\label{eq:convergence-series-zeros} 
\sum_{n \ge 0} \frac{1}{e^{c_2 \log(1+n)^{\alpha}/(2r)} -  e^{-c_2 \log(1+n)^{\alpha}/(2r)}} < + \infty.
\end{equation}
But, in case $\alpha < 1,$ we have that $e^{c_2 \log(1+n)^{\alpha}/(2r)} < e^{\log(1+n)} = n+1$ for all $n \gg_{a,c_2} 1.$ Thus, we should have, by \eqref{eq:convergence-series-zeros}, 
\[
\sum_{n \ge 0} \frac{1}{n+1} < + \infty. 
\]
This is a clear contradiction. This contradiction arises from the fact that, in order to use Lemma \ref{lemma-analytic-part}, we must assume $u(T) \not\equiv 0.$ Thus, $u(T) \equiv 0,$ and it follows at once that $u_0 \equiv 0,$ as desired. 
\end{proof}

\section{Proof of the higher-dimensional results}\label{sec:high-dim}
In this section, we will discuss how to prove a suitable version of the theorem above in higher dimensions. 

\begin{proof}[Proof of \cref{thm:rigidity-high-dim}] We note that for $d=2,3$, we first use \cref{lemma:self-improving-general-dimension} to guarantee  that $u_0$ and $u(T)$ are in $H^1(\R^d)$. (For $d\geq 4$, this is already guaranteed as $s>s_c = d/2 -1$.) Now,   we  employ   \cref{lemma-decay-from-zeros-high-dim} for $N = 0$ in order to conclude the exponential decay of $u_0$ and $u(T).$ From that, we use   \cref{lemma:self-improving-general-dimension}, which guarantees that $u_0, u(T) \in H^k(\R^d),$ for all $k \in \N.$ 
	
	The next step is to employ   \cref{lemma:analyticity-high-dimension}, together with a Corollary of the argument of Kenig--Ponce--Vega from   \cref{lemma-KPV}. Indeed, we use an induction argument to prove that, for each $N>0,$ there is $c_N > 0$ so that $e^{c_N|x|} \partial^{\alpha}_x u(t) \in L^2(\R^d),$ for all $t\in [0,T]$ and for all $\alpha$ multiindices so that $|\alpha| \le N.$  
	
	Indeed, for $N=0,$ the result follows in the same fashion as the argument in the proof of Theorem \ref{thm:rigidity}. Assuming it holds for some $N \in \N,$ a routine computation shows that any derivative $\partial_x^{\alpha} u =:v_{\alpha},$ $|\alpha| = N+1,$ satisfies an equation of the form 
	\[
	i \partial_t v_{\alpha} = - \Delta v_{\alpha} + |u|^2 v_{\alpha} + H,
	\]
	for $H$ a finite sum of products of derivatives of $u$ and $\overline{u}.$ The key property of $H$ is that each of the summands is a product of some \emph{lower order} derivative of $u$ with two other derivatives of $u$ of order $< N +1.$ By picking $\tilde{V} = \chi_{\R^d \setminus B_R} |u|^2$ and $F = H + \chi_{B_R} |u|^2,$ we see immediately that the conditions of   \cref{lemma-KPV} hold  for some $R > 0$ sufficiently large. As we have proved in   \cref{lemma-decay-from-zeros-high-dim} that $\partial^{\alpha}_x u_0, \partial_x^{\alpha} u(T) \in L^2(e^{c_{N+1}|x|})$ for some $c_{N+1} > 0,$ and the induction hypothesis shows that $H \in L^1([0,T]; L^2(e^{c_{N}|x|})),$ we conclude by Lemma \cref{lemma-KPV} that $\sup_{t \in [0,T]} \| e^{c_{N+1}|x|}\partial^{\alpha}_x u(t) \|_2 < + \infty,$ as desired. 
	
	We are now in a position to use   \cref{lemma:analyticity-high-dimension}. Indeed, choosing $N = \lceil d/2 \rceil$ in the argument above, let $C(u) = \sup_{t \in [0,T]} \|u(t)\|_{\mathcal{A}^N_2(c_{N+1}/2)} < +\infty$ for shortness. Fix the time $T'(C(u))$ given by   \cref{lemma:analyticity-high-dimension}. We pick $t = T - \frac{T'(C(u))}{2},$ and from  \cref{lemma:analyticity-high-dimension} itself and the comment thereafter we have that $e^{i|z|^2/(2T'(C(u)))}u(T) =: v(T)$ is analytic and bounded in each strip $S^d(r), \, r < \pi c_{N+1} T'(C(u)).$ 
	
	In order to finish, we shall resort back to one-dimensional models. Indeed, if $v(T)$ is as before, then we consider the one-dimensional functions $v(T)_{\tilde{z}}(z_1) = v(T)(z),$ where $z_1$ is the first coordinate of $z,$ and $\tilde{z}$ denotes the projection of the remaining coordinates onto $\C^{d-1}.$ We fix any $\tilde{z} \in \R^d$ for the rest of the argument. 
	
	By construction, $v(T)_{\tilde{z}}$ is bounded and analytic in any one (complex) dimensional strip $S^1(r')$ as above. But, by construction once more, we have that $v(T)_{\tilde{z}}$ has, when restricted to $z_i \in \R,$ zeros of the form $(c_2^2 \log(n+1)^{2\alpha} - |\tilde{z}|^2)^{1/2},$ with $n > 0$ sufficiently large. But we may employ the same argument with Lemma \ref{lemma-analytic-part} to this sequence as well, as it grows like $c_2 \log(1+n)^{\alpha}.$ This shows that $v(T)_{\tilde{z}}$ vanishes identically, and so does $u(T)_{\tilde{z}}.$ As $\tilde{z}$ was arbitrary, we conclude that $u(T) \equiv 0,$ as desired. 
\end{proof}

\printbibliography
\end{document}